\documentclass{article}
\usepackage{amssymb,amsfonts,amsmath,amsthm,amsopn,amstext,amscd,latexsym,xy,mathdots,stmaryrd}
\usepackage{hyperref}
\theoremstyle{plain}
\input xy
\xyoption{all}
\setlength{\textheight}{8.75in}
\setlength{\textwidth}{6.5in}
\setlength{\topmargin}{0.0in}
\setlength{\headheight}{0.0in}
\setlength{\headsep}{0.0in}
\setlength{\leftmargin}{0.0in}
\setlength{\oddsidemargin}{0.0in}
\setlength{\parindent}{3pc}
\newtheorem{theorem}{Theorem}[section]
\newtheorem{lemma}[theorem]{Lemma}
\newtheorem{proposition}[theorem]{Proposition}
\newtheorem{prop}[theorem]{Proposition}

\newtheorem{cor}[theorem]{Corollary}
\newtheorem{definition}[theorem]{Definition}

\theoremstyle{remark}

\newtheorem*{claim}{Claim}
\numberwithin{equation}{section}
\numberwithin{paragraph}{section}

\DeclareMathOperator{\Hom}{Hom}

\DeclareMathOperator{\Ad}{Ad}

\DeclareMathOperator{\rec}{rec}

\DeclareMathOperator{\Gal}{Gal}

\DeclareMathOperator{\ad}{Ad}

\DeclareMathOperator{\Res}{Res}

\DeclareMathOperator{\Ind}{Ind}
\DeclareMathOperator{\val}{val}

\DeclareMathOperator{\tr}{tr}
\DeclareMathOperator{\St}{St}
\DeclareMathOperator{\Supp}{Supp}
\DeclareMathOperator{\diag}{diag}
\DeclareMathOperator{\Art}{Art}
\DeclareMathOperator{\End}{End}

\DeclareMathOperator{\Frob}{Frob}

\DeclareMathOperator{\Spec}{Spec}

\newcommand{\cD}{{\mathcal D}}

\newcommand{\cG}{{\mathcal G}}

\newcommand{\cJ}{{\mathcal J}}

\newcommand{\cL}{{\mathcal L}}

\newcommand{\cO}{{\mathcal O}}

\newcommand{\cR}{{\mathcal R}}
\newcommand{\cS}{{\mathcal S}}

\newcommand{\ffrm}{{\mathfrak m}}

\newcommand{\bbA}{{\mathbb A}}

\newcommand{\bbC}{{\mathbb C}}

\newcommand{\bbF}{{\mathbb F}}

\newcommand{\bbQ}{{\mathbb Q}}
\newcommand{\bbR}{{\mathbb R}}

\newcommand{\bbT}{{\mathbb T}}

\newcommand{\bbZ}{{\mathbb Z}}

\newcommand{\GL}{\mathrm{GL}}

\newcommand{\PGL}{\mathrm{PGL}}

\newcommand{\SL}{\mathrm{SL}}

\newcommand{\PSL}{\mathrm{PSL}}

\newcommand{\wv}{{\widetilde{v}}}

\newcommand{\barqp}{{\overline{\bbQ}_p}}

\newcommand{\barfp}{{\overline{\bbF}_p}}

\newcommand{\CNL}{{\mathrm{CNL}}}
\newcommand{\Sets}{{\mathrm{Sets}}}

\newcommand{\T}{\mathbb{T}}
\newcommand{\A}{\mathbb{A}}
\newcommand{\Q}{\mathbb{Q}}

\newcommand{\C}{\mathbb{C}}

\newcommand{\Z}{\mathbb{Z}}

\newcommand{\aFp}{\overline{\mathbb{F}}_{p}}

\newcommand{\aZp}{\overline{\mathbb{Z}}_{p}}
\newcommand{\aQp}{\overline{\mathbb{Q}}_{p}}

\newcommand{\Oc}{\mathcal O}

\newcommand{\univ}{{\text{univ}}}

\def\rhobar{ {\overline{\rho}} }

\newcommand{\ra}{\rightarrow}

\newcommand{\F}{{\mathbb F}}

\title{Automorphy of some residually  $S_5$ Galois representations}

\author{Chandrashekhar B.  Khare\footnote{\textsc{Department of Mathematics, UCLA, Los Angeles, USA.} \textit{Email address}: \texttt{shekhar@math.ucla.edu}}  \ \ and  \ \ Jack A. Thorne\footnote{\textsc{Department of Pure Mathematics and Mathematical Statistics, Wilberforce Road, Cambridge, United Kingdom.} \textit{Email address:} \texttt{thorne@dpmms.cam.ac.uk}}} 

\setcounter{tocdepth}{1}
\begin{document}
\maketitle

\begin{abstract}
Let $F$ be a totally real field and $p$ an odd prime. 
We prove an automorphy lifting  theorem for  geometric representations $\rho:G_F \ra \GL_2(\aQp)$ 
which lift irreducible residual representations $\rhobar$ that arise from Hilbert modular forms. The new result  is that we allow  the case $p=5$, $\rhobar$ has projective image $S_5\cong \PGL_2(\F_5)$ and the fixed field of the kernel of the  projective representation contains $\zeta_5$.  The usual Taylor--Wiles method does not work in this case as there are elements of dual Selmer that cannot be killed by allowing ramification at Taylor--Wiles primes.  These elements arise from our hypothesis and the non-vanishing of $H^1(\PGL_2(\F_5),\Ad(1))$ where $\Ad(1)$ is the adjoint of the natural representation of $\GL_2(\F_5)$ twisted by the quadratic character of $\PGL_2(\F_5)$.  \footnote{\textit{2010 Mathematics Subject Classification:} 11F41, 11F80.}
\end{abstract}

\tableofcontents

\section{Introduction}

Let $F$ be a totally real number field, let $p$ be a prime, and let $\rho : G_F \to \GL_2(\overline{\bbQ}_p)$ be a geometric Galois representation. Conjectures of Fontaine--Mazur and Clozel predict that $\rho$ should be \emph{automorphic}, i.e.\ associated to a cuspidal automorphic representation of $\GL_2(\bbA_F)$. One can often prove this when one has access to 
an automorphy lifting theorem. Automorphy liftings theorems show, broadly speaking,  that  a geometric representation $\rho$ is automorphic, under the hypothesis that there is a mod $p$ congruence between $\rho$ and another geometric Galois representation $\rho'$ which is already known to be automorphic. In other words, they imply the automorphy of $\rho$, conditional on the \emph{residual automorphy} of the residual representation $\overline{\rho} : G_F \to \GL_2(\overline{\bbF}_p)$. In this paper, we prove a new automorphy lifting result for 2-dimensional Galois representations by using a variation of the techniques of an earlier paper of the second author \cite{ThorneA}. Our main result is the following.
\begin{theorem}\label{main}
 Let $F$ be a totally real field, $p>2$ a prime, and $\aQp$ an algebraic closure of $\Q_p$. Let $\rho:G_F \rightarrow \GL_2(\aQp)$ be a continuous representation satisfying the following conditions.
\begin{enumerate} \item The representation $\rho$ is unramified almost everywhere.
 \item For each place $v|p$ of $F$, $\rho|_{G_{F_v}}$ is de Rham. For each embedding $\tau:F \hookrightarrow \aQp$, we have  ${\rm HT}_\tau(\rho)=\{0,1\}$.
 \item The residual representation $\rhobar:G_F \rightarrow \GL_2(\aFp)$ is irreducible when restricted to $G_{F(\zeta_p)}$ and arises from a cuspidal automorphic representation $\pi$ of $\GL_2(\A_F)$ of weight 2.
\end{enumerate}
 Then $\rho$ is automorphic: there exists a cuspidal automorphic representation $\pi$ of $\GL_2(\A_F)$ of weight 2 and an isomorphism $\iota:\aQp \cong \C$ such that $\rho \simeq r_\iota(\pi)$.
 \end{theorem}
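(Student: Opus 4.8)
\textbf{Reduction to the new case.} The plan is to isolate the case flagged in the abstract and modify the Taylor--Wiles method so that the troublesome dual Selmer class becomes a local obstruction at one auxiliary prime. Let $K$ be the fixed field of the kernel of the projectivisation of $\rhobar$. If $p\neq 5$, or the projective image of $\rhobar$ is not isomorphic to $\PGL_2(\F_5)\cong S_5$, or $\zeta_5\notin K$, then all dual Selmer classes can be annihilated by allowing ramification at Taylor--Wiles primes and $\rho$ is automorphic by known automorphy lifting theorems (those of \cite{ThorneA} and its antecedents, together with solvable base change and the Langlands classification for $\GL_2$). So assume $p=5$, $\Gal(K/F)\cong S_5$ and $\zeta_5\in K$. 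Since the only normal subgroups of $S_5$ are $1$, $A_5$ and $S_5$, either $\zeta_5\in F$ --- in which case $\Ad^0\rhobar(1)=\Ad^0\rhobar$ and $H^1(S_5,\Ad^0\rhobar)=0$, so again no obstruction arises --- or $[F(\zeta_5):F]=2$, which I assume from now on; then the mod $5$ cyclotomic character $\overline{\eps}_5$, regarded as a character of $\Gal(K/F)\cong S_5$, is the sign character, so $\Ad^0\rhobar(1)$ restricted to $\Gal(K/F)$ is the twist of $\mathfrak{sl}_2(\F_5)$ by the sign character --- the $\Ad^0$-summand of the module ``$\Ad(1)$'' of the abstract, which carries all of its first cohomology since $5$ is odd. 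After a solvable totally real base change $F'/F$ linearly disjoint from $K$ over $F$, I may and do assume in addition that $\rho$ is ordinary or Fontaine--Laffaille at every place above $5$ and unramified outside $5$, that $\rhobar$ still arises (by cuspidal base change of $\pi$, which remains cuspidal as $\rhobar|_{G_{F(\zeta_5)}}$ stays irreducible) from a cuspidal automorphic representation of weight $2$, and that $[F(\zeta_5):F]=2$ and $\Gal(K/F)\cong S_5$ still hold; by solvable descent for automorphy it suffices to treat this situation.

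\textbf{The irremovable obstruction.} Set up the global deformation problem $\cS$ over a $5$-adic coefficient ring with fixed determinant, the appropriate crystalline condition of Hodge--Tate weights $\{0,1\}$ at each place above $5$, and the unramified condition elsewhere. By the Greenberg--Wiles formula the failure of $R_{\cS}$ to have the expected, ``numerically modular'' dimension is measured by $H^1_{\cL^\perp}(G_{F,S},\Ad^0\rhobar(1))$, and inflation along $G_F\twoheadrightarrow\Gal(K/F)\cong S_5$ gives an injection of the nonzero group $H^1(\PGL_2(\F_5),\Ad(1))$ into it, with image lying in the strict dual Selmer group; write $\cC$ for the resulting nonzero line. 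The key point is that $\cC$ cannot be killed by Taylor--Wiles primes: an eligible Taylor--Wiles prime $v$ has $\rhobar(\Frob_v)$ regular semisimple, so the decomposition group at $v$ maps into a cyclic subgroup of $S_5$ of order prime to $5$, on which $H^1$ vanishes, whence every class in $\cC$ restricts to zero in $H^1(G_{F_v},\Ad^0\rhobar(1))$ and remains in the augmented dual Selmer group. Thus $\dim_k H^1_{\cL_Q^\perp}(G_{F,S\cup Q},\Ad^0\rhobar(1))\ge 1$ for every allowable set $Q$ of Taylor--Wiles primes; using the irreducibility of $\rhobar|_{G_{F(\zeta_5)}}$ one produces Taylor--Wiles systems realising equality, so the ``defect'' is exactly $1$ and $\cC$ accounts for it.

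\textbf{Killing the obstruction at an auxiliary prime.} Here I would vary the technique of \cite{ThorneA}. Using the nonvanishing of the $H^1$ of a cyclic group of order $5$ acting on $\mathfrak{sl}_2(\F_5)$ through a regular unipotent, together with a Chebotarev argument, I would choose an auxiliary prime $w\nmid 5p$ with $q_w\equiv 1\pmod 5$, with $\rhobar$ unramified at $w$, and with $\rhobar(\Frob_w)$ a scalar times a regular unipotent, such that the image of a generator of $\cC$ in $H^1(G_{F_w},\Ad^0\rhobar(1))$ is nonzero and, more precisely, does not lie in the annihilator of the ``unipotent ramification'' (Iwahori-type) local condition $\cL_w^{\mathrm{Iw}}\supsetneq H^1_{\mathrm{ur}}(G_{F_w},\Ad^0\rhobar)$. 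Replacing the unramified condition at $w$ by $\cL_w^{\mathrm{Iw}}$ --- whose local lifting ring is flat over the coefficient ring of the expected dimension, with a tamely ramified principal series component and a Steinberg component, so that Taylor's Ihara avoidance applies --- produces a new deformation problem $\cS'$ whose dual Selmer group vanishes after adjoining suitable Taylor--Wiles primes: the defect is gone. On the automorphic side this is a level raising at $w$; the relevant space of automorphic forms of Iwahori level at $w$ congruent to $\pi$ is nonzero, containing both the oldforms attached to $\pi$ (unramified at $w$) and, via a level-raising congruence, Steinberg-at-$w$ forms.

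\textbf{Patching, and the main difficulty.} Running the Taylor--Wiles--Kisin patching argument for $\cS'$, with Ihara avoidance at $w$, yields a patched module over $R_\infty=R_\infty^{\mathrm{loc}}[[x_1,\dots,x_g]]$ which, the defect being zero, is maximal Cohen--Macaulay of full support; one deduces in the usual way that every de Rham point of $\Spec R_{\cS'}[1/5]$ of the correct Hodge type is automorphic. As $\rho$ is unramified at $w$ (hence satisfies $\cL_w^{\mathrm{Iw}}$) and satisfies the remaining conditions defining $\cS'$, it defines such a point, so $\rho|_{G_F}$ is automorphic; undoing the solvable base change by cyclic descent for $\GL_2$ (which is unconditional here since the image of $\rhobar$ is not solvable) and keeping track of Hodge--Tate weights then shows that $\rho$ over the original $F$ is $\simeq r_\iota(\pi')$ for a cuspidal automorphic representation $\pi'$ of $\GL_2(\A_F)$ of weight $2$, as asserted. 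I expect the third step to be the main obstacle: one must exhibit an auxiliary prime $w$ whose local deformation theory is simultaneously tame enough to run the patching machinery --- expected dimension, Ihara avoidance --- and rich enough that the globally defined class $\cC$ becomes a genuine local obstruction at $w$. This hinges on the group-cohomological accident at $p=5$ being ``generic'' enough to persist after restriction to a well-chosen Frobenius element, and on a matching level-raising input on the automorphic side.
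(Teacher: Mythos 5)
Your reduction to the case $p=5$, projective image $\PGL_2(\F_5)$ with $F(\zeta_5)$ inside the field cut out by $\ad^0\rhobar$, and your identification of the obstruction (the inflated ``Lie'' line in dual Selmer, invisible to Taylor--Wiles primes because Frobenius there has order prime to $5$) both match the paper, as does your choice of auxiliary prime $w$ with $q_w\equiv 1\bmod 5$, $q_w\not\equiv 1\bmod 25$ and $\rhobar(\Frob_w)$ unipotent of order $5$. The gap is in the mechanism for exploiting $w$. At such a prime $h^1(F_w,\Ad^0\rhobar)=2$ and the unramified line is $1$-dimensional, so the only subspace $\cL_w$ strictly containing $H^1_{\mathrm{ur}}$ is all of $H^1(F_w,\Ad^0\rhobar)$, i.e.\ the unrestricted condition; its lifting ring is not formally smooth and its generic fiber has several components (unramified, Steinberg, ramified principal series). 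Your appeal to Ihara avoidance to get full support of the patched module over this ring does not apply: Taylor's argument requires $\rhobar|_{G_{F_w}}$ to be \emph{trivial} so that one can compare with the lifting ring where inertia acts by fixed distinct tame characters, whereas here $\rhobar(\Frob_w)$ is a nontrivial unipotent. Conversely, any \emph{balanced} ($1$-dimensional) choice of $\cL_w$ that kills the Lie class must be transverse to the unramified line --- this is exactly the paper's Steinberg-type condition $\cD_w^{\St(\mathrm{uni})}$ (Proposition \ref{steinberg}) --- and then $\rho$, being unramified at $w$, is \emph{not} a point of the resulting deformation ring, so even a complete $R=\bbT$ theorem for that problem says nothing about $\rho$. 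There is a genuine tension here that your proposal does not resolve: the class you must kill is unramified and nonzero at $w$, so the local condition must allow genuine ramification at $w$, yet $\rho$ is unramified there.

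The paper's resolution, entirely absent from your sketch, is the ``automorphy mod $p^N$'' device: for each $N$ one chooses $v_2=v_2(N)$ by Lemma \ref{special1} and Proposition \ref{auxiliary} so that $\rho\bmod\lambda^N$ (though not $\rho$) satisfies the Steinberg condition at $v_2$ exactly; one level-raises (Proposition \ref{bound}), proves an $R=\bbT$ statement yielding automorphy of $\rho\bmod\lambda^{\lfloor N/C\rfloor}$ at level divisible by $v_2$ (Theorem \ref{cor_r_equals_t}), and then removes $v_2$ from the level by level lowering mod $p^N$ with a controlled loss of $\lambda$-adic precision (Theorem \ref{thm_level_lowering_mod_p_to_the_N}, Corollary \ref{approximation}, using the condition $q_{v_2}\not\equiv 1\bmod 25$ and Lemma \ref{eigenvalues}); automorphy of $\rho$ follows by letting $N\to\infty$. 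A secondary omission: to set up the $R=\bbT$ comparison one needs an automorphic lift $\pi_0$ of $\rhobar$ with prescribed behaviour at the places above $5$ (ordinary/non-ordinary matching $\rho$) and Steinberg at the auxiliary places; in the exceptional case this is not available off the shelf and occupies \S\ref{ordinary} of the paper, via a tensor-product trick and a $4$-dimensional automorphy lifting theorem on a unitary group. Your proposal would need both of these ingredients, or a genuinely new argument replacing them, to close.
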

We invite the reader to compare this statement with \cite[Theorem 3.5.5]{Kisin}. The new case as compared to \emph{loc. cit.} is when $p=5$, the  image of the projective  residual representation of $\rho$ (i.e. $\rhobar$ modulo the center)  is isomorphic to $\PGL_2(\F_5)$ and the fixed field of its kernel contains $\zeta_5$ (which implies that $[F(\zeta_5):F]=2$).  We call such $\rhobar$ {\it exceptional}.  It is easy to check that $\rhobar|_{G_{F'}}$ remains exceptional for any totally real extension  $F'/F$. A construction of Mestre gives  examples of exceptional $\rhobar$ (see \S \ref{examples} below).

We now explain the difficulty that this exceptional case presents. The most important tool for proving automorphy lifting theorems is the Taylor--Wiles argument. The original patching argument  of \cite{TW} proves an  $R=\T$ theorem, where $R$ is a certain deformation ring parametrizing deformations of the residual representation $\rhobar$ with local conditions (which are  in particular  almost everywhere the condition of being unramified), and $\T$ is a Hecke ring acting typically on a space of new forms $S_2(\Gamma_0(N),\Z)_{\mathfrak m}$ of suitable level $N$,  where $\mathfrak m$ is a maximal ideal of $\T$ associated to $\rhobar$.  Let $S$ be a finite set of primes that contains the prime divisors of $N$ and $p$. The  mod $p$ cotangent space of  $R$ is a certain Selmer group $H^1_{\mathcal L}(G_S,\Ad^0(\rhobar))$, with $\mathcal  L =(\mathcal L_v)_{v \in S}$ consisting of subspaces  $\mathcal L_v$ of $H^1(G_v,\Ad^0(\rhobar))$. 

One of the first steps in the Taylor--Wiles method is to choose auxiliary sets of primes $Q$ disjoint from $S$ and consider deformation rings
$R_Q$ with ramification allowed at $S \cup Q$, which surject onto  $R$,  such that $H^1_{\mathcal L_Q^\perp}(G_{S \cup Q},\Ad^0(\rhobar)(1))=0$.   (Here we just mention that  the Selmer condition $\cL_Q$ is a collection of subspaces of $H^1(G_v,\Ad^0(\rhobar))$ for each $v \in S \cup Q$, which at $v \in Q$ is all of $H^1(G_v,\Ad^0(\rhobar))$ and $\cL_Q^\perp =(\cL_v^\perp)_{v \in S \cup Q}$ consist of the orthogonal complements  of these subspaces under the local Tate duality pairing.) We call this  killing dual Selmer. 
By Wiles' Euler characteristic formula, this ensures that $H^1_{\mathcal L_Q}(G_{S \cup Q},\Ad^0(\rhobar))=H^1_{\mathcal L}(G_S,\Ad^0(\rhobar))$.

 Using such carefully chosen (Taylor--Wiles)  primes $Q$ one  asymptotically approximates the statement that there is a unique lift of $\rhobar$
with given inertial behavior (which corresponds to abelian/principal series ramification) at auxiliary primes.  As there is always a lift with given inertial behavior (which at almost all places is the unramified condition) that  arises from a newform, the Taylor--Wiles patching argument deduces the modularity of the given lift $\rho$.  The actual method is more intricate as the uniqueness statement is only asymptotic  and approximate: thus the Taylor--Wiles patching is necessitated. (The patching exploits the lower bound on the growth of Hecke rings at finite levels given by producing congruences, that  ``matches''  better and better  the upper bound on the size of  deformation rings at finite levels.)

This process of killing the dual Selmer group can only be carried out under some auxiliary hypotheses on the residual representation $\overline{\rho}$. The most general condition is that of adequacy, as defined in \cite[\S 2]{ThorneB}. Here, this boils down to the assertion that $\overline{\rho}|_{G_{F(\zeta_p)}}$ is irreducible and that $H^1(H, \ad^0) = 0$, where $H \subset \PGL_2(\overline{\bbF}_p)$ is the projective image of $\overline{\rho}|_{G_{F(\zeta_p)}}$ and $\ad^0$ is the natural adjoint representation of this group. 

In a previous work \cite{ThorneA}, we proved automorphy lifting theorems in some cases when the representation $\overline{\rho}|_{G_{F(\zeta_p)}}$ is reducible. The idea is to consider for each $N \geq 1$ modified local deformation conditions, which are satisfied by the representation $\rho \text{ mod }p^N$ (but not by $\rho$ itself). If these local conditions are chosen correctly, this allows one to convert troublesome dual Selmer classes (which cannot be killed by Taylor--Wiles primes) to classes which can indeed be killed. The usual Taylor--Wiles argument then allows one to deduce the automorphy of $\rho \text{ mod }p^N$. An argument of level-lowering mod $p^N$ and passage to the limit then implies the automorphy of $\rho$ itself. 

This argument was inspired by earlier works of the first named author \cite{K}, \cite{K1}, which use techniques of Ramakrishna to reduce to a situation where $\rho \text{ mod }p^N$ lives in a universal deformation ring which has a trivial tangent space, so a unique $\overline{\bbQ}_p$-point. This allows one to conclude automorphy  of $\rho \text{ mod }p^N$ by a rigidity argument. However, these techniques of Ramakrishna are not universally applicable, necessitating the use of Taylor--Wiles primes to get the most general result.

In this paper, we use the techniques of \cite{ThorneA} in a similar way. In contrast to that paper, we consider residual characteristic 5 representations $\overline{\rho}$ which do remain irreducible on restriction to the group $G_{F(\zeta_5)}$, but which fail adequacy for a different reason: the group $H^1(H, \ad^0) = H^1(\PSL_2(\bbF_5), \ad^0)$ is non-zero. Under the  assumption that the projective image of $\rhobar$  is $\PGL_2(\F_5)$ and its fixed field contains $F(\zeta_5)$, i.e. $\rhobar$ is exceptional,  this leads to classes in the dual Selmer group which arise by inflation from the image of $\overline{\rho}$. We call these classes  {\it Lie classes} or {\it Lie elements}. For exceptional $\rhobar$,   Lie elements  of dual Selmer   cannot be killed by Taylor--Wiles primes. Indeed, a Frobenius element at a Taylor--Wiles prime has order prime to $p$, and every element of the group $H^1(\PSL_2(\bbF_5), \ad^0)$ is killed on restriction to a subgroup of order prime to $p$. 

We show however that these Lie elements remain non-trivial on restriction to a decomposition group at a place $v$ where $\rhobar$ is unramified, the norm $q_v$ is 1 modulo $p$,    and  the image under $\rhobar$ of Frobenius at $v$ has order divisible by $p$. (Such places were used in \cite{Tay} to kill Lie elements of Selmer groups when the projective image of $\rhobar$ is $\PSL_2(\F_5)=A_5$. We use them to kill Lie elements of dual Selmer  for exceptional $\rhobar$.) By allowing ramification at such primes, we can again convert these troublesome elements into ones which are easily killed. The rest of the argument using automorphy mod $p^N$ is more or less the same. 

An important role is played by Lemma \ref{Z5}, whose proof follows from arguments in \cite{Boe},  which implies that whenever $\overline{\rho}$ has projective image $\PGL_2(\bbF_5)$, the projective image of $\rho$ contains a conjugate of $\PGL_2(\bbZ_5)$. This result, together with the Cebotarev density theorem, is used to find primes at which allowing ramification will have the desired effect on the dual Selmer group (cf. Proposition \ref{auxiliary}). This result is also essential to show that we can choose these primes $v$ to satisfy $q_v \equiv 1\text{ mod }p$ but  $q_v \not \equiv 1 \text{ mod }p^2$; this extra condition is needed to control the error terms in the process of level-lowering modulo $p^N$ (cf. Theorem \ref{thm_level_lowering_mod_p_to_the_N}).

It is natural to ask if our results can be generalized to prove automorphy lifting theorems in non-adequate situations in higher dimensions (i.e. when $n > 2$). The answer is certainly yes, and in fact we prove in an inadequate situation a rather restrictive automorphy lifting theorem for $\GL_4$ in \S \ref{ordinary} on the way to constructing ordinary lifts of 2-dimensional representations. We have not attempted to understand what is the most general theorem that can be proved this way, outside of $\GL_2$.

We now describe the organization of this paper. In \S \ref{sec_normalizations}, we recall our notation and normalizations. As far as possible, we have tried to use the same notation as the earlier paper \cite{ThorneA}. We begin the main body of the paper in \S \ref{examples} by showing, following Mestre, that exceptional residual representations $\overline{\rho}$ are plentiful over any totally real field $F$ satisfying the necessary condition $\sqrt{5} \in F$. In \S \ref{group_theory}, we state some group theoretical results, and in particular prove the result of B\"ockle mentioned above. In \S \ref{sec_shimura_curves_and_hida_varieties}, we define certain Shimura curves and their 0-dimensional analogues, and study the relation between their cohomology groups. For the most part we simply recall results proved in \cite[\S 4]{ThorneA}, but sometimes we must work harder; for example, when we wish to understand level lowering/raising at a prime $v$ where $q_v \equiv 1 \text{ mod }p$ and $\overline{\rho}(\Frob_v)$ has order $p$. 

In \S \ref{sec_galois_theory}, we study the deformation theory of Galois representations, and in particular show that the Lie type elements of the dual Selmer group can be killed using a well-chosen local deformation problem. These results are then used in \S \ref{sec_r_equals_t} to prove an $R = \bbT$ theorem, which is very similar to the one proved in \cite[\S 6]{ThorneA}. In \S \ref{ordinary}, we prove an auxiliary automorphy liftings result for a unitary group in 4 variables, and apply this together with the techniques of \cite{Bar12} to construct ordinary automorphic liftings of automorphic exceptional residual representations. Finally, in \S \ref{sec_main_theorem}, we apply all of this to prove our main theorem (Theorem \ref{thm_main_theorem}).

\section{Notation and normalizations}\label{sec_normalizations}

\paragraph*{Galois groups.} A base number field $F$ having been fixed, we will also choose algebraic closures $\overline{F}$ of $F$ and $\overline{F}_v$ of $F_v$ for every finite place $v$ of $F$. The algebraic closure of $\bbR$ is $\bbC$. If $p$ is a prime, then we will also write $S_p$ for the set of places of $F$ above $p$, $\overline{\bbQ}_p$ for a fixed choice of algebraic closure of $\bbQ_p$, and $\val_p$ for the $p$-adic valuation on $\barqp$ normalized so that $\val_p(p) = 1$. These choices define the absolute Galois groups $G_F = \Gal(\overline{F}/F)$ and $G_{F_v} = \Gal(\overline{F}_v/F_v)$.  We write $I_{F_v} \subset G_{F_v}$ for the inertia subgroup. We also fix embeddings $\overline{F} \hookrightarrow \overline{F}_v$, extending the canonical embeddings $F \hookrightarrow F_v$. This determines for each place $v$ of $F$ an embedding $G_{F_v} \to G_{F}$. We write $\bbA_F$ for the adele ring of $F$, and $\bbA_F^\infty = \prod'_{v \nmid \infty} F_v$ for its finite part. If $v$ is a finite place of $F$, then we write $k(v)$ for the residue field at $v$, $\kappa(v)$ for the residue field of $\overline{F}_v$, and $q_v = \# k(v)$. If we need to fix a choice of uniformizer of $\cO_{F_v}$, then we will denote it $\varpi_v$.

If $S$ is a finite set of finite places of $F$, then we write $F_S$ for the maximal subfield of $\overline{F}$ unramified outside $S$, and $G_{F, S} = \Gal(F_S/F)$; this group is naturally a quotient of $G_F$. If $v \not\in S$ is a finite place of $F$, then the map $G_{F_v} \to G_{F, S}$ factors through the unramified quotient of $G_{F_v}$, and we write $\Frob_v \in G_{F, S}$ for the image of a \emph{geometric} Frobenius element. We write $\epsilon : G_F \to \bbZ_p^\times$ for the $p$-adic cyclotomic character; if $v$ is a finite place of $F$, not dividing $p$, then $\epsilon(\Frob_v) = q_v^{-1}$. If $\rho : G_F \to \GL_n(\barqp)$ is a continuous representation, we say that $\rho$ is de Rham if for each place $v | p$ of $F$, $\rho|_{G_{F_v}}$ is de Rham. In this case, we can associate to each embedding $\tau : F \hookrightarrow \barqp$ a multiset $\mathrm{HT}_\tau(\rho)$ of Hodge--Tate weights, which depends only on $\rho|_{G_{F_v}}$, where $v$ is the place of $F$ induced by $\tau$. This multiset has $n$ elements, counted with multiplicity. There are two natural normalizations for $\mathrm{HT}_\tau(\rho)$ which differ by a sign, and we choose the one with $\mathrm{HT}_\tau(\epsilon) = \{ -1 \}$ for every choice of $\tau$.

\paragraph*{Artin and Langlands reciprocity.} We use geometric conventions for the Galois representations associated to automorphic forms, which we now describe. First, we use the normalizations of the local and global Artin maps $\Art_{F_v} : F_v^\times \to W_{F_v}^\text{ab}$ and $\Art_F : \bbA_F^\times \to G_F^\text{ab}$ which send uniformizers to geometric Frobenius elements. If $v$ is a finite place of $F$, then we write $\rec_{F_v}$ for the local Langlands correspondence for $\GL_2(F_v)$, normalized as in Henniart \cite{Hen93} and Harris--Taylor \cite{Tay01} by a certain equality of $\epsilon$- and L-factors. We recall that $\rec_{F_v}$ is a bijection between the set of isomorphism classes of irreducible admissible representations $\pi$ of $\GL_2(F_v)$ over $\bbC$ and set of isomorphism classes of 2-dimensional Frobenius--semi-simple Weil--Deligne representations $(r, N)$ over $\bbC$. We define $\rec_{F_v}^T(\pi) = \rec_{F_v}(\pi \otimes | \cdot |^{-1/2})$. Then $\rec_{F_v}^T$ commutes with automorphisms of $\bbC$, and so makes sense over any field $\Omega$ which is abstractly isomorphic to $\bbC$ (e.g.\ $\barqp$).

If $v$ is a finite place of $F$ and $\chi : W_{F_v} \to \Omega^\times$ is a character with open kernel, then we write $\St_2(\chi \circ \Art_{F_v})$ for the inverse image under $\rec_{F_v}^T$ of the Weil--Deligne representation 
\begin{equation}\label{eqn_definition_of_steinberg_representation}
\left( \chi  \oplus  \chi | \cdot |^{-1} , \left(\begin{array}{cc} 0 & 1 \\ 0 & 0 \end{array}\right) \right).
\end{equation}
If $\Omega = \bbC$, then we call $\St_2 = \St_2(1)$ the Steinberg representation; it is the unique generic subquotient of the normalized induction $i_B^{\GL_2} |\cdot|^{1/2} \otimes |\cdot|^{-1/2}$. If $(r, N)$ is any Weil--Deligne representation, we write $(r, N)^\text{F-ss}$ for its Frobenius--semi-simplification. If $v$ is a finite place of $F$ and $\rho : G_{F_v} \to \GL_n(\barqp)$ is a continuous representation, which is de Rham if $v | p$, then we write $\mathrm{WD}(\rho)$ for the associated Weil--Deligne representation, which is uniquely determined, up to isomorphism. (If $v \nmid p$, then the representation $\mathrm{WD}(\rho)$ is defined in \cite[\S 4.2]{Tat79}. If $v | p$, then the definition of $\mathrm{WD}(\rho)$ is due to Fontaine, and is defined in e.g. \cite[\S 2.2]{Bre02}.)

\paragraph*{Automorphic representations.} In this paper, the only automorphic representations we consider are cuspidal automorphic representations $\pi = \otimes'_v \pi_v$ of $\GL_2(\bbA_F)$ such that for each $v | \infty$, $\pi_v$ is the lowest discrete series representation of $\GL_2(\bbR)$ of trivial central character.  (The only exception is in \S \ref{ordinary}, where it is necessary to use automorphic representations of $\GL_4$ in order to construct ordinary lifts of fixed 2-dimensional residual representations. Since the arguments in \S \ref{ordinary} are self-contained, we avoid introducing the relevant notation here.) In particular, any such $\pi$ is unitary. We will say that $\pi$ is a cuspidal automorphic representation of $\GL_2(\bbA_F)$ of weight 2. If $\pi$ is such a representation, then for every isomorphism $\iota : \barqp \to \bbC$, there is a continuous representation $r_\iota(\pi) : G_F \to \GL_2(\barqp)$ satisfying the following conditions:
\begin{enumerate}
\item The representation $r_\iota(\pi)$ is de Rham and for each embedding $\tau : F \hookrightarrow \barqp$, $\mathrm{HT}_\tau(\rho) = \{ 0, 1 \}$.
\item Let $v$ be a finite place of $F$. Then $\mathrm{WD}(r_\iota(\pi)|_{G_{F_v}})^\text{F-ss} \cong \rec^T_{F_v}(\iota^{-1} \pi_v)$.
\item Let $\omega_\pi$ denote the central character of $\pi$; it is a character $\omega_\pi : F^\times \backslash \bbA_F^\times \to \bbC^\times$ of finite order. Then $\det r_\iota(\pi) = \epsilon^{-1} \iota^{-1} (\omega_\pi \circ \Art_F^{-1})$.
\end{enumerate}
For concreteness, we spell out this local-global compatibility at the unramified places. Let $v \nmid p$ be a prime such that $\pi_v \cong i_B^{\GL_2} \chi_1 \otimes \chi_2$, where $\chi_1, \chi_2 : F_v^\times \to \bbC^\times$ are unramified characters and $i^{\GL_2}_B$ again denotes normalized induction from the upper-triangular Borel subgroup $B \subset \GL_2$. Then the representation $r_\iota(\pi)$ is unramified at $v$, and the characteristic polynomial of $\iota r_\iota(\pi)(\Frob_v)$ is $(X - q^{1/2} \chi_1(\varpi_v))(X - q^{1/2} \chi_2(\varpi_v))$. If $T_v$, $S_v$ are the usual unramified Hecke operators, and we write $t_v,$ $s_v$ for their respective eigenvalues on $\pi_v^{\GL_2(\cO_{F_v})}$, then we have
\begin{equation}\label{eqn_local_global_compatibility_at_unramified_places}
(X - q^{1/2} \chi_1(\varpi_v))(X - q^{1/2} \chi_2(\varpi_v)) = X^2 - t_v X + q_v s_v.
\end{equation}
With the above notations, the pair $(\pi, \omega_\pi)$ is a RAESDC automorphic representation in the sense of \cite{BLGGT}, and our representation $r_\iota(\pi)$ coincides with the one defined there. On the other hand, if $\sigma(\pi) : G_F \to \GL_2(\barqp)$ denotes the representation associated to $\pi$ by Carayol \cite{Car86}, then there is an isomorphism $\sigma(\pi) \cong r_\iota(\pi) \otimes (\iota^{-1} \omega_\pi \circ \Art_F^{-1})^{-1}.$

\paragraph*{Ordinary Galois and admissible representations.} Let $p$ be a prime, and let $K$ be a finite extension of $\bbQ_p$. If $\rho : G_K \to \GL_2(\overline{\bbQ}_p)$ is a de Rham representation with $\mathrm{HT}_\tau(\rho) = \{ 0, 1\}$ for all embeddings $\tau : K \hookrightarrow \overline{\bbQ}_p$, we say that $\rho$ is ordinary if it has the form
\[ \rho \sim \left( \begin{array}{cc} \psi_1 & \ast \\ 0 & \psi_2 \epsilon^{-1} \end{array} \right), \]
where $\psi_1, \psi_2 : G_K \to \overline{\bbQ}_p^\times$ are continuous characters which become unramified after a finite extension of $K$. (This is the same definition that appears in \cite[\S 5.5]{ThorneA}.) 

If $\pi$ is a cuspidal automorphic representation of $\GL_2(\bbA_F)$ of weight 2, $\iota : \overline{\bbQ}_p \cong \bbC$ is an isomorphism, and $v$ is a $p$-adic place of $F$, we say that $\pi_v$ is $\iota$-ordinary if it satisfies the condition in \cite[\S 4.1]{ThorneA}. We say that $\pi$ itself is $\iota$-ordinary if $\pi_v$ is $\iota$-ordinary for each $p$-adic place $v$ of $F$. One knows (\cite[Lemma 5.24]{ThorneA}) that $\pi_v$ is $\iota$-ordinary if and only if the Galois representation $r_\iota(\pi)|_{G_{F_v}}$ is $\iota$-ordinary.

\paragraph*{Rings of coefficients.} We will call a finite extension $E/\bbQ_p$ inside $\barqp$ a coefficient field. A coefficient field $E$ having been fixed, we will write $\cO$ or $\cO_E$ for its ring of integers, $k$ or $k_E$ for its residue field, and $\lambda$ or $\lambda_E$ for its maximal ideal. If $A$ is a complete Noetherian local $\cO$-algebra with residue field $k$, then we write $\ffrm_A \subset A$ for its maximal ideal, and $\CNL_A$ for the category of complete Noetherian local $A$-algebras with residue field $k$. We endow each object $R \in \CNL_A$ with its profinite ($\ffrm_R$-adic) topology. 

If $\Gamma$ is a profinite group and $\rho : \Gamma \to \GL_n(\barqp)$ is a continuous representation, then we can assume (after a change of basis) that $\rho$ takes values in $\GL_n(\cO)$, for some choice of coefficient field $E$. The semi-simplification of the composite representation $\Gamma \to \GL_n(\cO) \to \GL_n(k)$ is independent of choices, up to isomorphism, and we will write $\overline{\rho} : \Gamma \to \GL_n(\barfp)$ for this semi-simplification.

\paragraph*{Galois cohomology.} If $E$ is a coefficient field and $\overline{\rho} : \Gamma \to \GL_2(k)$ is a continuous representation, then we write $\ad \overline{\rho}$ for $\End_k(\overline{\rho})$, endowed with its structure of $k[\Gamma]$-module. We write $\ad^0 \overline{\rho} \subset \ad \overline{\rho}$ for the submodule of trace 0 endomorphisms, and (if $\Gamma = G_F$) $\ad^0 \overline{\rho}(1)$ for its twist by the cyclotomic character. If $M$ is a discrete $\bbZ[G_F]$-module (resp. $\bbZ[G_{F, S}]$-module), then we write $H^1(F, M)$ (resp. $H^1(F_S/F, M)$) for the continuous Galois cohomology group with coefficients in $M$. Similarly, if $M$ is a discrete $\bbZ[G_{F_v}]$-module, then we write $H^1(F_v, M)$ for the continuous Galois cohomology group with coefficients in $M$. If $M$ is a discrete $k[G_F]$-module (resp. $k[G_{F, S}$]-module, resp. $k[G_{F_v}]$-module), then $H^1(F, M)$ (resp. $H^1(F_S/F, M)$, resp. $H^1(F_v, M)$) is a $k$-vector space, and we write $h^1(F, M)$ (resp. $h^1(F_S/F, M)$, resp. $h^1(F_v, M)$) for the dimension of this $k$-vector space, provided that it is finite.

\section{Exceptional representations}\label{examples}

Let $F$ be a totally real number field. We say that a representation $\rhobar:G_F \ra \GL_2(\overline{\bbF}_5)$ is exceptional if its projective image $\ad^0 \overline{\rho}(G_F)$ is isomorphic to $\PGL_2(\F_5)$ and the fixed field of $\ker \ad^0 \overline{\rho}$ contains $F(\zeta_5)$. This forces $\overline{\rho}$ to be totally odd (i.e. $\det \overline{\rho}(c) = -1$ for every complex conjugation $c \in G_F$.) We recall that the classification of finite subgroups of $\PGL_2(\overline{\bbF}_5)$ shows that $\PGL_2(\F_5) \cong S_5$, and that any subgroup $H \subset \PGL_2(\overline{\bbF}_5)$ such that $H \cong S_5$ is a conjugate of $\PGL_2(\F_5)$.

The main point of this paper is to prove automorphy lifting theorems for geometric Galois representations $\rho : G_F \to \GL_2(\overline{\bbQ}_5)$ with exceptional residual representation. Before doing this, we make some general remarks.
\begin{lemma}\label{bc}
 Let  $\rhobar:G_F \rightarrow \GL_2(\overline \F_5)$ be any continuous representation. Then:
\begin{enumerate} \item  Let $F'/F$ be a totally real number field. Then $\rhobar|_{F'}$ is  exceptional if and only if $\rhobar$ is exceptional. 
\item There is no conjugate  of $\rhobar$   valued in $\GL_2(\F_5)$.
\end{enumerate}
\end{lemma}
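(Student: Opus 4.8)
For part (1), the key observation is that "exceptional" is a condition on the projective image $\ad^0\overline{\rho}(G_F) \subset \PGL_2(\overline{\bbF}_5)$ together with the requirement that the fixed field of $\ker\ad^0\overline{\rho}$ contains $F(\zeta_5)$. The plan is to pass to projective representations and compare images. Since $F'/F$ is a finite extension, $G_{F'} \subset G_F$ is an open (hence finite-index) subgroup, so $\ad^0\overline{\rho}(G_{F'})$ is a finite-index subgroup of $\ad^0\overline{\rho}(G_F)$. If $\overline{\rho}$ is exceptional, then $\ad^0\overline{\rho}(G_F) \cong S_5$; one then uses the fact, recalled just above the lemma, that the only proper subgroups of $S_5 \cong \PGL_2(\bbF_5)$ realized as projective images of $\GL_2(\overline{\bbF}_5)$-representations over subfields — combined with the hypothesis that $\zeta_5$ lies in the fixed field of $\ker(\ad^0\overline{\rho}|_{G_{F'}})$ — must still be all of $S_5$. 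Actually the cleanest route: $S_5$ has no proper subgroup containing $A_5$ except itself, and $A_5$ is simple; I would argue that $\ad^0\overline{\rho}(G_{F'})$ surjects onto $\ad^0\overline{\rho}(G_F)/N$ for appropriate normal $N$, forcing it to contain $A_5 = \PSL_2(\bbF_5)$, and then the $\zeta_5$ condition (which detects the quadratic subextension, i.e. the sign character $S_5 \to \{\pm 1\}$) pins it down to be all of $S_5$. Conversely, if $\overline{\rho}|_{G_{F'}}$ is exceptional then $\ad^0\overline{\rho}(G_{F'}) \cong S_5$, and since this is a subgroup of $\ad^0\overline{\rho}(G_F)$, the latter contains $S_5$; the classification of finite subgroups of $\PGL_2(\overline{\bbF}_5)$ forces it to be exactly a conjugate of $\PGL_2(\bbF_5)$, and $F(\zeta_5)$ is contained in the fixed field of $\ker\ad^0\overline{\rho}|_{G_{F'}}$, hence a fortiori in the fixed field of $\ker\ad^0\overline{\rho}$ restricted appropriately — here one needs $\zeta_5 \in F'$ to already come from compatibility, but since the fixed field over $F$ of $\ker\ad^0\overline{\rho}$ is the compositum-type field whose restriction to $F'$ is the fixed field over $F'$, the containment transfers. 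I should be careful that "$F'/F$ totally real" is genuinely used: it guarantees $F' \cap F(\zeta_5) = F$ when $\sqrt5 \notin F$, but in fact the statement does not require $\sqrt 5 \in F$ — the equivalence is symmetric and the totally real hypothesis ensures the quadratic subfield $F(\zeta_5)/F$ does not collapse upon base change.

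For part (2), suppose for contradiction that some conjugate of $\overline{\rho}$ takes values in $\GL_2(\bbF_5)$. Then its projective image lies in $\PGL_2(\bbF_5) \cong S_5$, and the determinant is a character $G_F \to \bbF_5^\times \cong \bbZ/4\bbZ$. The key tension is between the projective image being $S_5$ and the image itself being constrained inside $\GL_2(\bbF_5)$: we have a central extension
\begin{equation*}
1 \to Z \to \overline{\rho}(G_F) \to \ad^0\overline{\rho}(G_F) \to 1,
\end{equation*}
where $Z = \overline{\rho}(G_F) \cap \bbF_5^\times \cdot I$. If the projective image is $S_5$, then $\overline{\rho}(G_F)$ is a subgroup of $\GL_2(\bbF_5)$ surjecting onto $S_5$; but $|\GL_2(\bbF_5)| = 480 = 4 \cdot 120$ and the only subgroups of $\GL_2(\bbF_5)$ surjecting onto $\PGL_2(\bbF_5) = S_5$ are those containing $\SL_2(\bbF_5)$, since $\SL_2(\bbF_5)$ is the unique Schur cover of $A_5$ sitting inside and the preimage of $A_5$ must be $\SL_2(\bbF_5)$. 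The obstruction is then cohomological: $S_5$ does not lift to $\GL_2(\bbF_5)$ — equivalently, there is no subgroup of $\GL_2(\bbF_5)$ mapping isomorphically to $S_5$, because such a splitting would give a faithful $2$-dimensional representation of $S_5$ over $\bbF_5$, and $S_5$ has no faithful irreducible $2$-dimensional representation (its smallest faithful irreducible has dimension $4$; the $2$-dimensional irreducibles of $S_5$, if any exist in characteristic $5$, factor through $S_3$). So $\overline{\rho}(G_F)$ must strictly contain the preimage of $A_5$, which is $\SL_2(\bbF_5)$; thus $\overline{\rho}(G_F) \supseteq \SL_2(\bbF_5)$ with $\overline{\rho}(G_F)/(\overline{\rho}(G_F)\cap \SL_2(\bbF_5))$ mapping onto $S_5/A_5 = \bbZ/2$. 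This is not yet a contradiction on its own — so I need the $\zeta_5$ hypothesis. The point is that $\overline{\bbF}_5^\times$ has no element of order $5$, so any scalar in the image has order prime to $5$; meanwhile the condition $\zeta_5 \in$ fixed field of $\ker\ad^0\overline{\rho}$ says $\epsilon \bmod 5$ factors through $\ad^0\overline{\rho}(G_F) = S_5$, forcing the mod-$5$ cyclotomic character (order $4$? no — $[\bbF_5(\zeta_5):\bbF_5]$, note $\zeta_5 \in \bbF_5$ already!). Here I must be more careful: $\zeta_5 \in \bbF_5$, so over $\bbF_5$ the relevant statement is instead about the field extension cut out by the representation, not a character mod $5$.

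Let me restate the argument for (2) more carefully, as the real content is an $\ell$-adic vs.\ mod-$\ell$ subtlety. Since $\zeta_5 \in \bbF_5$, if $\overline{\rho}$ were valued in $\GL_2(\bbF_5)$ then the extension $F(\zeta_5)/F$ — which is forced to be degree $2$ by the exceptionality (this degree-$2$ statement is recalled in the introduction) — would have to be visible inside the field cut out by $\overline{\rho}$. But the field cut out by a $\GL_2(\bbF_5)$-valued representation has Galois group a subgroup of $\GL_2(\bbF_5)$; the maximal cyclotomic (abelian, detecting roots of unity) quotient is controlled by $\det$, valued in $\bbF_5^\times$, of order dividing $4$. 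The extension $F(\zeta_5)/F$ with $\zeta_5 \notin F$ but $\zeta_5 \in \bbF_5$ is, however, ramified at $5$ in a way incompatible with being cut out inside such a representation unless $\overline{\rho}$ is ramified at $5$ — and this does not immediately contradict. The cleanest correct argument, and the one I would write: the hypothesis in the exceptional setting forces $[F(\zeta_5):F]=2$, i.e.\ $\sqrt{5}\notin F$; the quadratic character $\delta: G_F \to \{\pm 1\}$ cutting out $F(\zeta_5)$ factors through $\ad^0\overline{\rho}(G_F)=S_5$ as the sign character. If $\overline{\rho}$ took values in $\GL_2(\bbF_5)$, then (since $\bbF_5$ has no nontrivial odd unramified-type obstruction here) one would get, composing with the sign character $S_5 \to \{\pm1\}$, that $\delta = \det\overline{\rho}^{\,2}$ or some power — and matching the order: $\det\overline{\rho}$ has values in $\bbF_5^\times$ of order $4$, its square has order $2$, but the square of $\det$ is $\det(\Sym^?)$... rather than belabor this, I would simply invoke that in part (1) we reduced to checking over any convenient totally real $F'$, base change to $F' = F(\zeta_5)$ — no wait, that's not totally real. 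So I would instead argue directly: a $\GL_2(\bbF_5)$-valued representation with projective image $S_5$ forces, as above, $\SL_2(\bbF_5) \subseteq \overline{\rho}(G_F)$; but then $\det\overline{\rho}(G_F) = \overline{\rho}(G_F)/(\overline{\rho}(G_F)\cap\SL_2) $ has order $2$ (mapping onto $S_5/A_5$), so $\det\overline{\rho}$ is a quadratic character; on the other hand $\det\overline{\rho} = \epsilon^{-1}\bmod 5 \cdot (\text{finite order part})$ and $\epsilon \bmod 5$ is \emph{trivial} since $\zeta_5 \in \bbF_5$... The main obstacle, and where I would focus the writing, is pinning down exactly why $S_5$ fails to embed in $\GL_2(\bbF_5)$ and how the resulting constraint $\SL_2(\bbF_5)\subseteq \mathrm{im}$ conflicts with $\det\overline\rho$ being forced to have order exactly $4$ (not $2$) — this last fact presumably comes from the representation being totally odd and the determinant being $\epsilon^{-1}$ times something, and is the real crux. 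I expect part (2) to be the main obstacle; part (1) is essentially formal group theory once the projective-image dictionary is set up.
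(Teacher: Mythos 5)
Both parts of your argument have genuine gaps, and in both cases the missing ingredient is the same: the decisive use of complex conjugation. For part (1), let $F_0$ be the fixed field of $\ker \ad^0\overline{\rho}$, so $\Gal(F_0/F)\cong S_5$ in the exceptional case. Knowing that $F(\zeta_5)\not\subseteq F'$ only shows that $\ad^0\overline{\rho}(G_{F'})=\Gal(F_0/F_0\cap F')$ is not contained in $A_5$; a priori it could be as small as a single transposition. Your claim that this image ``surjects onto $\ad^0\overline{\rho}(G_F)/N$ for appropriate normal $N$'' is unsubstantiated: $F'/F$ is not assumed Galois, and a finite-index subgroup of $S_5$ need not contain $A_5$. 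The correct mechanism is that $F_0\cap F'$ is totally real, so $\Gal(F_0/F_0\cap F')$ contains \emph{every} complex conjugation of $\Gal(F_0/F)$; these are involutions moving $\zeta_5\in F_0$, hence odd, and the subgroup they generate is normal (a union of conjugacy classes) with nontrivial image in $S_5/A_5$, hence is all of $S_5$ --- this is exactly the paper's one-line reduction. Your converse direction also fails as written: the classification of finite subgroups of $\PGL_2(\overline{\bbF}_5)$ does \emph{not} force a finite subgroup containing a conjugate of $\PGL_2(\bbF_5)$ to equal it ($\PSL_2(\bbF_{25})$ and $\PGL_2(\bbF_{5^r})$ are permitted), so one again needs the complex conjugations: they generate a normal subgroup of $\ad^0\overline{\rho}(G_F)$ contained in the given copy of $S_5$, which is self-normalizing in $\PGL_2(\overline{\bbF}_5)$.

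For part (2) you concede that you have not closed the argument, and the step you single out as ``the real crux'' points in the wrong direction: if $\SL_2(\bbF_5)\subseteq\overline{\rho}(G_F)\subseteq\GL_2(\bbF_5)$ with projective image $S_5$, then $\det\overline{\rho}$ necessarily has image all of $\bbF_5^\times$ (it must contain a non-square to surject onto $S_5/A_5$), so no contradiction arises from the order of the determinant; and the assertion that ``$\epsilon \bmod 5$ is trivial since $\zeta_5\in\bbF_5$'' confuses $\mu_5\subset\overline{\bbF}_5$ with $\mu_5\subset\overline{F}$ --- the mod-$5$ cyclotomic character of $G_F$ has order $[F(\zeta_5):F]=2$ here. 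The proof the paper has in mind is one line, again via oddness: for any $\tau:G_F\to\GL_2(\bbF_5)$ and any complex conjugation $c$ one has $\tau(c)^2=1$, so $\det\tau(c)=\pm 1$, and both $1$ and $-1=2^2$ are squares in $\bbF_5^\times$, whence the image of $\tau(c)$ in $\PGL_2(\bbF_5)$ lies in $\PSL_2(\bbF_5)$. For exceptional $\overline{\rho}$, however, every complex conjugation must land outside $\PSL_2(\bbF_5)=A_5$, since $A_5$ fixes $F(\zeta_5)$ pointwise while $c$ moves $\zeta_5$. Note that this is precisely where $p=5$ enters: the whole point is that $-1$ is a square in $\bbF_5^\times$.
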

\begin{proof}
\begin{enumerate} \item We note that the quadratic subfield of the extension of $F$ cut out by the projective representation arising from an exceptional  $\rhobar$ is not totally real. Then the proof follows easily on using that a normal subgroup  $H$ of $S_5$ with non-trivial  image in $\Z/2\Z=S_5/A_5$ is  all of $S_5$.
\item For any representation $\tau:G_F \ra \GL_2(\F_5)$, the image of any complex conjugation in $G_F$  maps to $\PSL_2(\F_5)$.
\end{enumerate}
\end{proof}
We would like a construction  of exceptional $\rhobar$, equivalently $S_5$ extensions of $F$, with the corresponding quadratic subfield  fixed by $A_5$ equal to $F(\zeta_5)$. Serre in a message to the first-named author told us about a construction of Mestre, which for any number field $F$ and a quadratic extension $F'/F$,  produces an $S_n$ extension $L/F$ containing $F'$.  Mestre has made his construction explicit for $n=5$,  and the quadratic extension $F(\zeta_5)/F$. We now describe this construction.

Consider  $P =x(x^4+B)\in F[x]$, with non-zero discriminant $\Delta(P) =2^8 B^5$,  and  the one-parameter family  $ E(T) =P-TQ  $ where
$Q =  20^2 B^2 x^4 + 12^2 B^3$. The discriminant of is $\Delta(E(T))=\Delta(P)S(T)^2$, where $S(T) \in F(T)$. The Galois group of the splitting field of $E(T)$ over $F(T)$ is either $A_5$ or $S_5$ depending on whether $\Delta(P) \in F^*$ is a square or not (in the latter case it is not a regular extension and contains the extension $F(\sqrt{\Delta(P)})$). The reader can consult \S 9.3 of \cite{Ser92} for a related construction.

We now fix $B$ so that $F(\zeta_5)=F(\sqrt{B})$ and choose a generic $T \in F$ using the Hilbert irreducibility theorem. This gives examples of exceptional $\rhobar$. As an explicit example, one can take (over any totally real number field $F$ with $\sqrt{5} \in F$) the splitting field over $F$ of the polynomial
\[ x^5 + (1000\sqrt{5} + 3000)x^4 -(\sqrt{5} + 5)x/2 - (1440 \sqrt{5} + 3600). \]

\section{Some group theory}\label{group_theory}

\subsection{A matrix lemma}

We need the following elementary lemma about almost diagonalizable matrices. Let $E$ be a coefficient field.
\begin{lemma}\label{eigenvalues}
Let $A \in \End_\cO(M)$, where $M = (\cO/\lambda^n)^2$, and let $f_A(X)$ denote the characteristic polynomial of $A$. Suppose that there exist $\alpha, \beta \in \cO/\lambda^n$ such that $f_A(X) = (X - \alpha)(X - \beta)$ and $\alpha-\beta$ is divisible exactly by $\lambda^m$ for some integer $0 \leq m \leq n-1$. Then there exist $e_1,e_2 \in M$ with $\lambda^{n-m-1}e_1 \neq 0$ and $\lambda^{n-m-1} e_2 \neq 0$ such that $Ae_1=\alpha e_1,Ae_2=\beta e_2$.
\end{lemma}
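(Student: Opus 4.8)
The plan is to treat $A$ as a matrix over the ring $R = \cO/\lambda^n$ and produce the eigenvectors by a direct approximation/Hensel-type argument, handling the eigenvalue $\alpha$ (the case of $\beta$ being symmetric). First I would reduce to the case $\alpha = 0$: replacing $A$ by $A - \alpha$, the characteristic polynomial becomes $X(X - (\beta - \alpha))$, and $\beta - \alpha$ is divisible exactly by $\lambda^m$; an eigenvector for eigenvalue $\alpha$ of the original $A$ is the same as a kernel vector of the new $A$. So it suffices to show: if $A \in \End_\cO(M)$ has $f_A(X) = X(X - \gamma)$ with $\val(\gamma) = m$ exactly (here $0 \le m \le n-1$), then there exists $e_1 \in M$ with $\lambda^{n-m-1}e_1 \ne 0$ and $A e_1 = 0$. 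By Cayley--Hamilton over $R$ we have $A(A - \gamma) = 0$, i.e.\ $A^2 = \gamma A$.

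Next I would write $N = A - \gamma$, so $AN = NA = 0$ and $A - N = \gamma \cdot \mathrm{Id}$, hence $A \cdot N = 0$ shows $\mathrm{Im}(N) \subseteq \ker(A)$. The key observation is that $\gamma \cdot \mathrm{Id} = A - N$, so for any $x \in M$, $\gamma x = Ax - Nx$, and applying $A$: $A(\gamma x) = A^2 x - ANx = \gamma A x$ (consistent), while applying $N$: $N(\gamma x) = NAx - N^2x = -N^2 x = -N(A - \gamma)x = \gamma N x$. More usefully: I want a vector killed by $A$ but not too divisible. Since $\gamma M \subseteq \mathrm{Im}(A) + \mathrm{Im}(N)$ trivially, I instead argue as follows. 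Pick $x \in M$ with $x \notin \lambda M$ (a generator of a free rank-one summand, which exists since $M$ has a basis). Consider $N x = Ax - \gamma x$. Then $A(Nx) = 0$, so $e_1 := Nx \in \ker A$. It remains to check $\lambda^{n-m-1} Nx \ne 0$, i.e.\ that $Nx$ is divisible by $\lambda$ at most $m$ times (in at least one coordinate). If $Nx$ were divisible by $\lambda^{m+1}$, then... this is the crux, and here I expect to need the trace/determinant relations: $\tr(A) = \gamma$, $\det(A) = 0$. The determinant being $0$ forces $A$ to have nontrivial kernel after reduction mod $\lambda$, and the trace being exactly divisible by $\lambda^m$ forces the image of $A \bmod \lambda^{m+1}$ to be ``small''. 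I would make this precise by diagonalizing $A$ modulo $\lambda$: over the field $k$, $\overline{A}$ has eigenvalues $0, \overline{\gamma}$; if $m \ge 1$ then $\overline\gamma = 0$ too and $\overline A$ is nilpotent, if $m = 0$ then $\overline A$ has distinct eigenvalues $0 \ne \overline\gamma$ and is diagonalizable over $k$.

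The cleanest route, which I would actually carry out, is to pick a basis in which $\overline A$ is in a normal form and then lift. In the case $m = 0$: $\overline A$ is diagonalizable with distinct eigenvalues, so by a Hensel/successive-approximation argument (or the idempotent lifting in the semisimple-mod-$\lambda$ case) $A$ itself is diagonalizable over $R = \cO/\lambda^n$, giving genuine eigenvectors $e_1, e_2$ with $\lambda^{n-1}e_i \ne 0$ (they generate free summands), which is the claim with $m = 0$. In the case $m \ge 1$: write $A$ in a basis where its reduction mod $\lambda$ is strictly upper triangular; then $A = \lambda^{?}\cdot(\text{something})$ — more carefully, I would show by induction on $m$, or by a change of basis, that there is a basis $e_1, e_2$ of $M$ with $Ae_1 = \alpha e_1 + (\text{$\lambda$-divisible terms})$ and then correct $e_1$ by a convergent series $e_1 + \sum_{k\ge 1}(\text{correction}_k)$ using $A^2 = \gamma A$ to control the tail; the exact divisibility $\val(\gamma) = m$ guarantees the corrections only lose $m$ powers of $\lambda$, so the resulting eigenvector $e_1$ satisfies $\lambda^{n-m-1}e_1 \ne 0$. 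The main obstacle is precisely this bookkeeping of $\lambda$-divisibility in the non-semisimple case $m \ge 1$: one must show the iterative correction process both converges in the $\lambda$-adic topology on the finite ring $R$ (automatic, since after finitely many steps corrections vanish) and does not push the eigenvector into $\lambda^{n-m}M$. I expect this to come down to the identity $A(A-\gamma) = 0$ together with the observation that $(A - \gamma)$ acts invertibly on the ``$\beta$-part'' after inverting $\lambda^m$, which is exactly what $\val(\gamma) = m$ buys.
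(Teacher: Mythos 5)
Your opening move is the right one, and it is in fact the same as the paper's: by Cayley--Hamilton $(A-\alpha)(A-\beta)=0$, so every element of $(A-\beta)M$ is automatically an $\alpha$-eigenvector, and the whole lemma reduces to showing that $\lambda^{n-m-1}(A-\beta)M \neq 0$. But that is exactly the point you label ``the crux'' and then do not resolve. The direct route breaks off at ``If $Nx$ were divisible by $\lambda^{m+1}$, then\dots'', and the alternative route you propose instead (normal form mod $\lambda$ plus an iterative correction) rests on the unproved assertion that ``the exact divisibility $\val(\gamma)=m$ guarantees the corrections only lose $m$ powers of $\lambda$.'' That assertion is not bookkeeping; it \emph{is} the content of the lemma, and nothing in your sketch establishes it. (The $m=0$ case via lifting idempotents is fine, but it is the easy case.)

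The missing step has a short argument, which is how the paper proceeds. Suppose $\lambda^{n-m-1}(A-\beta)M = 0$. Then $A-\beta$ annihilates the submodule $\lambda^{n-m-1}M \cong (\cO/\lambda^{m+1})^2$, on which $A$ therefore acts as the scalar $\beta$; under this isomorphism $A$ acts as $A \bmod \lambda^{m+1}$, whose characteristic polynomial is $f_A \bmod \lambda^{m+1}$, so $(X-\alpha)(X-\beta) \equiv (X-\beta)^2 \bmod \lambda^{m+1}$. Comparing linear coefficients gives $\alpha \equiv \beta \bmod \lambda^{m+1}$, contradicting the hypothesis that $\alpha-\beta$ is divisible \emph{exactly} by $\lambda^m$. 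Hence some $e_1=(A-\beta)x$ survives multiplication by $\lambda^{n-m-1}$, and symmetrically for $e_2$. Your instinct that trace/determinant relations are what is needed was correct --- the comparison of characteristic polynomials above is precisely a trace comparison --- but the proposal as written never carries it out.
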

\begin{proof}
It suffices to show that $\lambda^{n-m-1}(A - \beta)M \neq 0$. If $\lambda^{n-m -1}(A - \beta)M = 0$, then $(A - \beta)$ acts as 0 on $\lambda^{n-m-1} M \cong (\cO / \lambda^{m+1})^2$, implying that $f_A(X) \equiv (X - \beta)^2 \text{ mod }\lambda^{m+1}$, hence $\alpha \equiv \beta \text{ mod }\lambda^{m+1}$. This contradicts our hypothesis.
\end{proof}

\subsection{Closed subgroups of $\PGL_2(\cO)$}

Now let $p = 5$, and let $E$ be a coefficient field. The following lemma plays a key role in this paper.
\begin{lemma}\label{Z5}
Let $H$ be closed subgroup of $\PGL_2(\Oc)$ with residual image a conjugate $\PGL_2(\F_5) \subset \PGL_2(k)$. Then $H$ is  conjugate to $\PGL_2(A)$ where  $A$ is a closed $\Z_5$-subalgebra  of $\Oc$. In particular  $H$ contains a conjugate of $\PGL_2(\Z_5)$, and the maximal abelian quotient of $H$ is finite, and in fact isomorphic to $\Z/2\Z\simeq \PGL_2(\F_5)/\PSL_2(\F_5)$.
\end{lemma}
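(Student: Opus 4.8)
The plan is to show that a closed subgroup $H \subset \PGL_2(\cO)$ whose residual image is a conjugate of $\PGL_2(\F_5)$ is, after conjugation, of the form $\PGL_2(A)$ for a closed $\Z_5$-subalgebra $A \subseteq \cO$; the remaining assertions then follow formally, since $\Z_5 \subseteq A$ implies $\PGL_2(\Z_5) \subseteq \PGL_2(A)$, and the maximal abelian quotient of $\PGL_2(A)$ is $\Z/2\Z$ because $\PSL_2(A)$ is its own commutator subgroup (being generated by unipotents, as $A$ is a local ring in which $5$ is topologically nilpotent and the residue field is $\F_5$, so $\PSL_2(\F_5)$ is perfect and one lifts by successive approximation). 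So the crux is the structure theorem for $H$ itself. First I would pass to the $\SL$/$\GL$-level: lift $H$ to its preimage $\widetilde H \subset \GL_2(\cO)$ under $\GL_2(\cO) \to \PGL_2(\cO)$, so $\widetilde H$ is a closed subgroup containing the center $\cO^\times$ whose image mod $\lambda$ contains $\SL_2(\F_5)$ (since $\PSL_2(\F_5)$ lifts to $\SL_2(\F_5)$ inside $\GL_2(\F_5)$, $5$ being odd). It then suffices to analyze $\widetilde H \cap \SL_2(\cO)$, a closed subgroup of $\SL_2(\cO)$ surjecting onto $\SL_2(\F_5)$.

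The technical heart is the observation, due to B\"ockle (this is where I would invoke ``arguments in \cite{Boe}''), that for $p = 5$ a closed subgroup $G$ of $\SL_2(\cO)$ mapping onto $\SL_2(\F_5)$ is automatically of the shape $\SL_2(A)$ for a closed local $\Z_5$-subalgebra $A$ of $\cO$. The mechanism is a Lie-theoretic successive-approximation argument on the filtration $\SL_2(\cO) \supset \Gamma_1 \supset \Gamma_2 \supset \cdots$ by principal congruence subgroups $\Gamma_i = \ker(\SL_2(\cO) \to \SL_2(\cO/\lambda^i))$, whose graded pieces $\Gamma_i/\Gamma_{i+1}$ are the abelian groups $\sll(\lambda^i\cO/\lambda^{i+1}\cO)$ carrying the adjoint action of $\SL_2(\F_5)$. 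The point is that as an $\F_5[\SL_2(\F_5)]$-module, $\sll(\F_5) \cong \ad^0$ is irreducible of dimension $3$ (here one uses $p = 5 > 3$, so the Killing form is nondegenerate and there is no trace issue), so at each stage the graded piece of $G$ is either $0$ or all of $\sll$ over the relevant residue line, and one reads off a chain of ideals / a subring $A$; commutator identities in $\SL_2$ (the standard ones expressing that $[\Gamma_i,\Gamma_j] \subseteq \Gamma_{i+j}$ with prescribed Lie bracket on the graded level) propagate the subalgebra condition and show $G \supseteq \SL_2(A)$, while $G \subseteq \SL_2(A)$ by definition of $A$ as generated by the ``coordinates'' appearing in $G$. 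I would cite \cite{Boe} for the details of this filtration argument rather than reproduce it.

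The main obstacle I anticipate is precisely controlling this congruence-filtration induction: one must check that the $\SL_2(\F_5)$-submodule structure of each graded piece genuinely forces an \emph{ideal} of $\cO$ (equivalently a power $\lambda^{j}$) at each level and that these assemble into a single subring $A$ closed under multiplication — this uses the bracket $[\sll(\lambda^i), \sll(\lambda^j)] = \sll(\lambda^i \cdot \lambda^j)$ and the surjectivity of $[\cdot,\cdot]: \sll \times \sll \to \sll$ for $p=5$, together with completeness of $\cO$ to pass to the limit — and one must handle the small-prime subtlety that $\PSL_2(\F_5) = A_5$ has a nonzero $H^1$ with adjoint coefficients (the very phenomenon motivating the paper), which is why the argument is stated at the level of the full $\SL_2(\F_5)$ rather than its projective quotient. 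Once the subring $A$ is produced, descending from $\GL_2$ to $\PGL_2$ is routine: $H$ is the image of $\widetilde H$, and $\widetilde H = A^\times \cdot \SL_2(A) \cdot(\text{a central twist}) $ maps onto $\PGL_2(A)$; finiteness of the abelianization and the identification with $\PGL_2(\F_5)/\PSL_2(\F_5) \cong \Z/2\Z$ then follow from perfectness of $\PSL_2(A)$ as noted above.
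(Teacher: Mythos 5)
The crux of your argument --- that a closed subgroup $G$ of $\SL_2(\cO)$ surjecting onto $\SL_2(\F_5)$ is automatically conjugate to $\SL_2(A)$ for a subring $A$ --- is false, and the reduction from $\PGL_2$ to $\SL_2$ discards exactly the structure that makes the lemma true. The issue is the behaviour of the relevant $H^1$ under your reduction. Since the centre $\{\pm 1\}$ of $\SL_2(\F_5)$ has order prime to $5$ and acts trivially on $\sll_2(\F_5)=\ad^0$, inflation--restriction gives
\[ H^1(\SL_2(\F_5), \ad^0) \cong H^1(\PSL_2(\F_5), \ad^0) \cong H^1(A_5, \ad^0) \neq 0, \]
which is precisely the one-dimensional exceptional cohomology group that the whole paper is about. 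The vanishing that the lemma needs occurs only at the level of $\PGL_2(\F_5)=S_5$: the outer $\bbZ/2$ acts on $H^1(A_5,\ad^0)$ by $-1$, and since $2$ is invertible mod $5$ one gets $H^1(\PGL_2(\F_5),\ad^0)=0$ (while $H^1(\PGL_2(\F_5),\ad^0(1))\neq 0$). So your parenthetical remark that passing to $\SL_2(\F_5)$ evades the $A_5$ cohomology problem has it backwards; it is passing to the full $\PGL_2(\F_5)$ that does so.

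This is not merely a gap in exposition: the $\SL_2$-level structure theorem you invoke has counterexamples. Take $\cO = \Z_5[\sqrt 5]$, so $\cO/\lambda^2 \cong \F_5[\epsilon]/(\epsilon^2)$. A nonzero cocycle $c \in H^1(\SL_2(\F_5),\ad^0)$ yields a section $h \mapsto (1+\epsilon c(h))h$, whose image $G_0 \subset \SL_2(\F_5[\epsilon])$ is a copy of $\SL_2(\F_5)$ not conjugate (even by $1+\epsilon M$) to the standard one, hence not equal to $\SL_2(B)$ for any subring $B \subseteq \F_5[\epsilon]$. Its full preimage $G_1 \subset \SL_2(\cO)$ is then a closed subgroup surjecting onto $\SL_2(\F_5)$ which is not conjugate to any $\SL_2(A)$. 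The reason this does not contradict the lemma is that $G_0$ has projective residual image only $\PSL_2(\F_5)$, not $\PGL_2(\F_5)$; when the residual image is all of $\PGL_2(\F_5)$, the extra $\bbZ/2$ rules out precisely these deformations. Your reduction to $\widetilde H \cap \SL_2(\cO)$ forgets that the ambient $H$ covers all of $\PGL_2(\F_5)$, and with it the crucial cohomological vanishing. The paper's proof stays at the $\PGL_2$ level throughout, deforming $\overline{\rho}: H \to \PGL_2(\F_5)$ and using three facts, of which $H^1(\PGL_2(\F_5),\Ad^0)=0$ (handling the equal-characteristic first-order deformation $\F_5[\epsilon]$) and the non-splitting of $\PGL_2(\Z/25) \to \PGL_2(\F_5)$ (handling $\Z/25$) are the two you would need and only one of which survives your reduction. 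Your congruence-filtration strategy is in the right spirit and could perhaps be run directly at the $\PGL_2$ level, but as written the argument does not go through.
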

\begin{proof}
The proof follows from the  arguments in \cite{Boe} as explained to us by  G. B\"ockle. After conjugating $H$, we can assume that the residual image of $H$ is equal to $\PGL_2(\bbF_5)$. We denote by $\Ad$ the adjoint representation of $\PGL_2(\F_5)$ on $M_2(\F_5)$ and by $Z$ the subspace of scalar matrices. We note the following key facts:
\begin{enumerate} \item $H^1(\PGL_2(\F_5),\Ad/Z)=0$.
\item The map $\PGL_2(\Z/5^2\Z) \ra \PGL_2(\Z/5\Z)$ does not split.
\item The representation $\Ad/Z \cong \Ad^0$ is absolutely irreducible as $\bbF_5[\PGL_2(\F_5)]$-module.
\end{enumerate}
Consider the ring $R = \Z_5+\lambda \Oc$. Then $R \in \CNL_{\bbZ_5}$, and $H \subset \PGL_2(R)$. Let $\rho : H \to \PGL_2(R)$ denote the tautological representation, $\overline{\rho} : H \to \PGL_2(\bbF_5)$ its residual representation. We can define a functor $\mathrm{Def}_{\overline{\rho}} : \CNL_{\bbZ_5} \to \mathrm{\Sets}$ by associating to each $A \in \CNL_{\bbZ_5}$ the set of strict equivalence classes of liftings $r : H \to \PGL_2(A)$ of $\overline{\rho}$. The group $H$ satisfies Mazur's condition $\Phi_p$ \cite{Maz}, so the functor $\mathrm{Def}_{\overline{\rho}}$ is represented by an object $\cR \in \CNL_{\bbZ_5}$. Let $\rho^u : H \to \PGL_2(\cR)$ denote a representative of the universal deformation.

We claim that $\rho^u$ is surjective. This claim implies the lemma: by universality, there is a canonical classifying homomorphism $\cR \to R$. Let $A \subset R$ denote the image. Then $\rho$ is conjugate inside $\PGL_2(\cO)$ to the group $\PGL_2(A)$. The natural map $\bbZ_5 \to A$ is injective, so we find that $H$ contains a conjugate of $\PGL_2(\bbZ_5)$. To calculate the maximal abelian quotient, we first note that the commutator subgroup of $\SL_2(A)$ is $\SL_2(A)$.
Therefore the commutator subgroup of $\PGL_2(A)$ contains the image of $\SL_2(A)$.
This shows that the abelianization of $\PGL_2(A)$ is the quotient of $\GL_2(A)$ modulo $A^*\SL_2(A)$.
Via the determinant map whose kernel is $\SL_2(A)$, this quotient is identified with $A^*/A^{*2}$. 
Because $A$ is local with residue characteristic different from 2, $A^{*2}$ contains $(1+{\mathfrak m}_A)^2=1+{\mathfrak m}_A$. Thus  the commutator subgroup of $\PGL_2(A)$ is $\PSL_2(A)$, and $\PGL_2(A)/\PSL_2(A)\simeq \PGL_2(\F_5)/\PSL_2(\F_5)$,  proving the last sentence of the lemma.

To prove the claim, it is enough (cf. \cite[Theorem 1.6]{Boe}, \cite[Proposition 2]{Maz86}) to show that the induced map
\[ \rho^u \text{ mod } \ffrm_{\cR}^2 : H \to \PGL_2(\cR / \ffrm_{\cR}^2) \]
is surjective. We will prove this using the facts 1, 2, 3 above. The target of this map sits in a short exact sequence
\[ \xymatrix@1{ 1 \ar[r] & \ffrm_{\cR} / \ffrm^2_{\cR} \otimes_{\bbF_5} \Ad^0 \ar[r] &  \PGL_2(\cR / \ffrm_{\cR}^2) \ar[r] & \PGL_2(\bbF_5) \ar[r] & 1.} \]
We know that the image surjects to $\PGL_2(\bbF_5)$, so it is enough to show that the image contains $\ffrm_{\cR} / \ffrm_{\cR}^2 \otimes_{\bbF_5} \Ad^0$. Because of fact 3, the image has the form $V \otimes_{\bbF_p} \Ad^0$ for some subgroup $V \subset \ffrm_{\cR} / \ffrm_{\cR}^2$. If $V \neq \ffrm_{\cR} / \ffrm_{\cR}^2$, then we can choose a codimension 1 subspace $K \subset \ffrm_{\cR} / \ffrm_{\cR}^2$ containing $V$. The subspace $K$ is then automatically an ideal. Let $A = \cR / (\ffrm_{\cR}^2, K)$. We can then pushout by the surjection $\cR \to A$ to obtain a short exact sequence
\[ \xymatrix@1{ 1 \ar[r] & \Ad^0 \ar[r] &  \PGL_2(A) \ar[r] & \PGL_2(\bbF_5) \ar[r] & 1.} \]
The ring $A$ is isomorphic either to $\bbZ_5 / 25 \bbZ_5$ or $\bbF_5[\epsilon]$. By construction, the image $H_A$ of $H$ in the group $\PGL_2(A)$ has trivial intersection with $\Ad^0$, and therefore describes a section of the map $\PGL_2(A) \to \PGL_2(\bbF_5)$.

If $A = \bbZ_5 / 25 \bbZ_5$, then this contradicts fact 2, i.e.\ that the reduction map does not split. If $A = \bbF_5[\epsilon]$, then fact 1 implies that $A = M^{-1} \PGL_2(\bbF_5) M$ for some $M \in 1 + \epsilon \Ad^0$. By universality, the map $\cR \to A$ then factors as $\cR \to \bbF_5 \to A$, which contradicts that $\cR \to A$ is surjective, by construction. In either case, then, we obtain a contradiction. This establishes the claim, and therefore the lemma.
\end{proof}
Note that the lemma above is false if the residual image is $\PSL_2(\F_5)$, as $H$ could be isomorphic to $A_5$.

\section{Shimura curves and Hida varieties}\label{sec_shimura_curves_and_hida_varieties}

In the proof of our main theorem, we need to make use of level raising and lowering results in a similar way to \cite{ThorneA}. In this section, we recall some of the key results of \cite[\S 4]{ThorneA} in this direction, and indicate any needed extensions. 

Let $F$ be a totally real number field of even degree over $\bbQ$, and let $Q$ be a finite set of finite places of $F$. We fix for each such $Q$  the quaternion algebra $B_Q$ as in \cite[\S 4]{ThorneA}, ramified at $Q$ and some set of infinite places, a maximal order $\Oc_Q$ of $B_Q$, and write $G_Q$ for the associated reductive group over $\Oc_F$. We fix a choice of an auxiliary place $a$ with $q_a>4^d$ and define the class $\cJ_Q$ of good open compact subgroups $U=\prod_v U_v$  of $G_Q(\A_F^\infty)$ (which depends on $a$).

Depending on whether $Q$ is even or odd, we have for each $U \in \cJ_Q$ the Hida variety $X_Q(U)$ or the Shimura curve $M_Q(U)$ defined over $F$  as in \cite[\S \S 4.2, 4.3]{ThorneA}. We have a theory of integral models of the $M_Q(U)$ for good $U$ as summarized in  \cite[\S \S 4.4, 4.5]{ThorneA}.

Fix a prime $p$ and a coefficient field $E$. We now skip to \cite[\S 4.6]{ThorneA} and for a set of places $Q$ of $F$ not containing $a$, define a collection of Hecke algebras and modules for these algebras with $\Oc$-coefficients. Let $S$ be a  finite set of finite places of  $F$ containing $Q$. Write $\T^{S,\text{univ}}$ and $\T_Q^{S,\text{univ}}$ for the universal Hecke algebras as in \cite{ThorneA}. The superscript $S$ denotes that we are including the unramified Hecke operators $T_v$ only for primes $v \not\in S$, and the subscript $Q$ indicates that we are including the $U_v$ operators for $v \in Q$.

Let $U \in {\mathcal J}_Q$.  If $\# Q$ is odd we define $H_Q(U)=H^1(M_Q(U)_{\overline F},\Oc)$. If $\# Q$ is even,  then we define $H_Q(U)=H^0(X_Q(U),\Oc)$. In either case, $H_Q(U)$ is a finite free $\Oc$-module. If $S$ contains the places of $F$ such that $U_v \neq \GL_2(\Oc_v)$, then the Hecke algebra $\T_Q^{S,\text{univ}}$ acts on $H_Q(U)$.

If $\# Q$ is odd, then the Galois group $G_{F}$ acts on $H_Q(U)$, and this action commutes with the action of $\T_Q^{S,\text{univ}}$. The Eichler--Shimura relation holds: for all finite places $v \notin S \cup S_p$, the action of $G_{F_v}$ is unramified and we have the relation $$\Frob_v^2-S_v^{-1}T_v\Frob_v+qS_v^{-1}=0$$ in ${\rm End}_\Oc(H_Q(U))$. 

\begin{theorem}\label{ES}
Let $\# Q$ be odd and  let ${\mathfrak m}\subset \T^S(H_Q(U))$ be a maximal ideal.
\begin{enumerate} \item There exists a continuous representation $\rhobar_{\mathfrak m} : G_F \to \GL_2(\T^S(H_Q(U))/{\mathfrak m})$ satisfying the following condition: for all finite places $v \not\in S \cup S_p$ of $F$, $\rhobar_{\mathfrak m}$ is unramified at $v$, and the characteristic polynomial of $\rhobar_{\mathfrak m}(\Frob_v)$ is given by $X^2 - T_v X + q_v S_v \text{ mod }{\mathfrak m}$.
\item Suppose that the representation $\rhobar_{\mathfrak m}$ is absolutely irreducible (in other words, the ideal ${\mathfrak m}$ is \emph{non-Eisenstein}). Then there exists a continuous representation $\rho_{\mathfrak m}:G_F \ra \GL_2(\T^S(H_Q(U))_{\mathfrak m})$ lifting $\rhobar_{\mathfrak m}$, and satisfying the following condition: for all finite places $v\not\in S \cup S_p$ of $F$, $\rho_{\mathfrak m}$ is unramified at $v$, and the characteristic polynomial of $\rhobar_{\mathfrak m}(\Frob_v)$ is given by $X^2 - T_v X + q_v S_v$. 
\item Suppose that ${\mathfrak m}$ is non-Eisenstein. There is a finite $\T^S(H_Q(U))_{\mathfrak m}$-module $M$, together with an isomorphism of $\T^S(H_Q(U))[G_F]$-modules
\[H_Q(U) \cong \rho_{\mathfrak m} \otimes_{\T^S(H_Q(U))_{\mathfrak m}} (\epsilon\det \rho_{\mathfrak m}) \otimes_{\T^S(H_Q(U))_{\mathfrak m}} M.\]
\end{enumerate}
\end{theorem}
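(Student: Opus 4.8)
The plan is to deduce all three parts from the Eichler--Shimura relation stated above, together with the description of $H_Q(U)\otimes_\Oc\aQp$ in terms of automorphic representations (via Jacquet--Langlands and the cohomology of the Shimura curve $M_Q(U)$, as recalled in \cite[\S 4]{ThorneA} and going back to \cite{Car86}), the integral statements being reduced to the generic fibre by a Carayol-type argument that exploits the absolute irreducibility of $\rhobar_{\mathfrak m}$.

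For part (1) I would first produce a pseudo-representation. Since $H_Q(U)=H^1(M_Q(U)_{\overline F},\Oc)$ is finite free over $\Oc$ and carries a continuous $G_F$-action, we get a continuous homomorphism $G_F\to\GL(H_Q(U))$, which by the Eichler--Shimura relation is unramified outside $S\cup S_p$; moreover, decomposing $H_Q(U)_{\mathfrak m}\otimes_\Oc\aQp$ under the commuting actions of $\T^S(H_Q(U))$ and $G_F$ into pieces on each of which $G_F$ acts through a $2$-dimensional representation with $\Frob_v$ having characteristic polynomial a specialisation of $X^2-T_vX+q_vS_v$, one sees that the functions $g\mapsto\tr(g)$ and $g\mapsto\det(g)$ on these $2$-dimensional pieces glue to a continuous $2$-dimensional pseudo-representation over $\T^S(H_Q(U))\otimes\aQp$ whose values on Frobenii, namely $T_v$ and $q_vS_v$, already lie in $\T^S(H_Q(U))$. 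By the density of Frobenii (Chebotarev) and continuity, this pseudo-representation takes values in $\T^S(H_Q(U))$; reducing modulo $\mathfrak m$ and using that a $2$-dimensional pseudo-representation over a field is attached to a genuine semisimple representation, unique up to isomorphism, gives $\rhobar_{\mathfrak m}$ with the stated characteristic polynomials.

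For part (2), the non-Eisenstein hypothesis means $\rhobar_{\mathfrak m}$ is absolutely irreducible, so the pseudo-representation of part (1), localised at $\mathfrak m$, is a $2$-dimensional pseudo-representation over the henselian local ring $\T^S(H_Q(U))_{\mathfrak m}$ whose residual pseudo-representation comes from an absolutely irreducible representation; by the theory of pseudo-representations (see e.g.\ \cite[\S 4]{ThorneA}) it is itself attached to a genuine lift $\rho_{\mathfrak m}:G_F\to\GL_2(\T^S(H_Q(U))_{\mathfrak m})$ of $\rhobar_{\mathfrak m}$, unique up to conjugacy, unramified outside $S\cup S_p$, with $\tr\rho_{\mathfrak m}(\Frob_v)=T_v$ and $\det\rho_{\mathfrak m}(\Frob_v)=q_vS_v$. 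For part (3), I would first establish the isomorphism after inverting $p$: enlarging $E$ if necessary, Jacquet--Langlands and the computation of the cohomology of $M_Q(U)$ give a $\T^S\otimes\aQp[G_F]$-equivariant decomposition of $H_Q(U)_{\mathfrak m}\otimes_\Oc\aQp$ into pieces on each of which $G_F$ acts through a specialisation of $\rho_{\mathfrak m}\otimes(\epsilon\det\rho_{\mathfrak m})$, which shows that the evaluation map
\[ \rho_{\mathfrak m}\otimes_{\T^S(H_Q(U))_{\mathfrak m}}(\epsilon\det\rho_{\mathfrak m})\otimes_{\T^S(H_Q(U))_{\mathfrak m}}M\longrightarrow H_Q(U)_{\mathfrak m},\qquad M:=\Hom_{\Oc[G_F]}\!\big(\rho_{\mathfrak m}\otimes\epsilon\det\rho_{\mathfrak m},\,H_Q(U)_{\mathfrak m}\big), \]
is an isomorphism after $\otimes_\Oc\aQp$. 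To descend to $\Oc$, I would use that $H_Q(U)$ is finite free over $\Oc$ and that, because $\rhobar_{\mathfrak m}$ is absolutely irreducible, $H_Q(U)_{\mathfrak m}\otimes_\Oc k$ is a direct sum of copies of $\rhobar_{\mathfrak m}$ as $k[G_F]$-module (Carayol's lemma); combined with Nakayama this forces the evaluation map to be surjective, hence an isomorphism of $\Oc$-lattices, and shows $M$ is finite over $\T^S(H_Q(U))_{\mathfrak m}$.

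The crux is part (3): the passage from the automorphic description over $\aQp$ to the integral statement. The danger is that $H_Q(U)_{\mathfrak m}$, as an integral $G_F$-module, could a priori carry lattice-level structure not visible generically, so one must genuinely use the absolute irreducibility of $\rhobar_{\mathfrak m}$ — via Carayol's argument — to force it to be built out of $\rho_{\mathfrak m}$ alone; combined with the level raising and lowering inputs recalled from \cite[\S 4]{ThorneA} (and their refinements at places $v$ with $q_v\equiv 1\bmod p$ and $\rhobar(\Frob_v)$ of order $p$, promised in the introduction), this is where any real work lies, parts (1) and (2) being by now formal consequences of the Eichler--Shimura relation and the theory of pseudo-representations.
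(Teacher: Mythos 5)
The paper itself does not argue this theorem at all: its proof is the single line ``This is \cite[Proposition 4.7]{ThorneA}'', and that result in turn rests on Carayol's lemma and the standard construction of Galois representations attached to Hilbert modular forms. Your sketch is therefore a reconstruction of the argument the paper outsources, and it follows the standard route: for (1), glue the traces and determinants from the automorphic decomposition of $H_Q(U)\otimes_\Oc\aQp$ into a pseudo-representation valued in $\T^S(H_Q(U))$ via Chebotarev, then reduce modulo $\ffrm$; for (2), lift the pseudo-representation over the complete local ring $\T^S(H_Q(U))_\ffrm$ to a genuine representation using absolute irreducibility of $\rhobar_\ffrm$ (Carayol/Nyssen--Rouquier); for (3), apply Carayol's lemma. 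This is consistent with the cited source. Two caveats. First, the closing appeal to ``level raising and lowering inputs'' and their refinements at places with $q_v\equiv 1\bmod p$ is a red herring: those play no role in Theorem \ref{ES}, which is purely about the existence and shape of $\rho_\ffrm$ on $H_Q(U)$ at a fixed level. Second, your descent argument in (3) is slightly circular as written: the assertion that $H_Q(U)_\ffrm\otimes_\Oc k$ is a direct sum of copies of $\rhobar_\ffrm$ (suitably twisted) is essentially the mod-$\lambda$ instance of the statement you are trying to prove, and is a \emph{consequence} of Carayol's lemma rather than an input to it; moreover, even granting it, surjectivity of the evaluation map requires knowing that $M\otimes_\Oc k$ surjects onto $\Hom_{k[G_F]}(\rhobar_\ffrm\otimes\epsilon\det\rhobar_\ffrm, H_Q(U)_\ffrm\otimes k)$, which Nakayama alone does not give. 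The correct (and standard) route here is Carayol's original argument, which works with the image of $\T^S(H_Q(U))_\ffrm[G_F]$ in $\End_\Oc(H_Q(U)_\ffrm)$ and uses the trace identities plus absolute irreducibility to identify it with a matrix algebra over $\T^S(H_Q(U))_\ffrm$; with that substitution your outline is complete.
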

\begin{proof}
This is \cite[Proposition 4.7]{ThorneA}.
\end{proof}

\subsection{Level-raising}\label{raising}

We now discuss level-raising, as in \cite[\S 4.8]{ThorneA}. We have to make slight adjustments as we want to allow the set $Q$ considered there to contain places $w$ such that $q_w \equiv 1 \text{ mod }p$. 
We suppose given the following data:
\begin{itemize}
\item  A finite set $R$ of finite places of $F$ of even cardinality, disjoint from $S_p \cup \{a\}$.
\item  A good subgroup $U=\prod_w U_w \in \mathcal J_R$. 
\item A finite set $Q$ of finite places of $F$, of even cardinality, and satisfying the following conditions:
\begin{itemize}
\item $U_w=\GL_2(\Oc_w)$ for $w \in Q$.
\item $Q \cap (S_p \cup \{a\} \cup R)=\emptyset$.
\end{itemize}
\end{itemize}
If $J \subset Q$, then we define the subgroup $U_J$ of $G_{R\cup J}(\A_F^\infty)$ as in \cite[\S 4.7]{ThorneA}:
\begin{itemize}
\item If $w \not\in J$, then $U_{J,w}=U_w$.
\item If $w \in J$, then $U_{J,w}$ is the unique maximal compact subgroup of $G_{R \cup J}(F_w)$.
\end{itemize}
Let $S$ be a finite set of finite places of $F$ containing $S_p\cup  R \cup Q$ and the  places $w$ such that $U_w \neq \GL_2(\Oc_{F_w})$. Let $\mathfrak m ={\mathfrak m}_\emptyset$ be a non-Eisenstein maximal ideal of $\T^{S,\univ} = \T^{S,\univ}_\emptyset$ in the support of $H_R(U)$. Thus $\rhobar_{\mathfrak m}$ is absolutely irreducible  and for each $v \in Q$, $\rhobar_{\mathfrak m}|_{G_{F_v}}$ is unramified.  After enlarging the coefficient field $E$,  we can assume that for all $v \in Q$, the eigenvalues $\alpha_v,\beta_v$ of $\rhobar_{\mathfrak m}(\Frob_v)$ lie in $k$. We demand that:
\begin{itemize}
\item   For $v \in Q$, $\alpha_v / \beta_v = q_v$.
\item   If $v \in Q$ and $q_v \equiv 1 \text{ mod }p$, then $\rhobar(\Frob_v)$ is unipotent of order $p$. 
\end{itemize}
 For each $J \subset Q$,  let ${\mathfrak m}_J \subset \T_J^{S,\univ}$ be the maximal ideal generated by $\mathfrak m_\emptyset$ and $U_v-\alpha_v$, $v \in J$. Note that for $v \in J$ with $q_v \equiv -1 \text{ mod }p$, this represents a choice of one of the eigenvalues of $\overline{\rho}(\Frob_v)$ (since $\beta_v$ would also be allowable).
\begin{lemma}\label{raising1}
 The ideal ${\mathfrak m}_Q$ is in the support of $H_{R \cup Q}(U_Q)$.
\end{lemma}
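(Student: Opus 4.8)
The plan is to follow the level-raising strategy of \cite[\S 4.8]{ThorneA} essentially verbatim, adding the places of $Q$ one at a time, and to isolate the only genuinely new input, namely the local behaviour at a place $v$ with $q_v\equiv 1\bmod p$. Concretely, I would prove by induction on $\#J$ the more general assertion that for every $J\subset Q$ the ideal $\mathfrak m_J$ lies in the support of $H_{R\cup J}(U_J)$ as a $\T_J^{S,\univ}$-module: the case $J=\emptyset$ is the hypothesis on $\mathfrak m=\mathfrak m_\emptyset$ and $H_R(U)=H_{R\cup\emptyset}(U_\emptyset)$, and the case $J=Q$ is the lemma. For the inductive step I would fix $v\in Q\setminus J$ and set $J'=J\sqcup\{v\}$.

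Since $\#R$ is even, $\#(R\cup J)$ has the same parity as $\#J$ and $\#(R\cup J')$ the opposite parity, so one of $H_{R\cup J}(U_J)$, $H_{R\cup J'}(U_{J'})$ is the $H^0$ of a Hida variety and the other the $H^1$ of a Shimura curve. In either case $B_{R\cup J}$ is split at $v$ while $B_{R\cup J'}$ is ramified there, and I would relate the two cohomology groups through the reduction at $v$ of whichever one is a curve, using the integral models and Cerednik--Drinfeld-type descriptions of the special fibre recalled in \cite[\S\S 4.4, 4.5, 4.8]{ThorneA}. This should produce a $\T_{J'}^{S,\univ}[G_F]$-equivariant exact sequence expressing $H_{R\cup J'}(U_{J'})$ in terms of finitely many copies of the ``$v$-old'' cohomology --- $H_{R\cup J}$ at level $U_J$ with $U_{J,v}$ replaced by an Iwahori subgroup, on which $U_v$ acts by a $2\times 2$ matrix with characteristic polynomial reducing modulo $\mathfrak m$ to $X^2-T_vX+q_vS_v\equiv(X-\alpha_v)(X-\beta_v)$ --- together with ``$v$-new'' pieces and Eisenstein-type error terms.

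Next I would localise this sequence at $\mathfrak m_{J'}=(\mathfrak m_J,\,U_v-\alpha_v)$. Non-Eisensteinness of $\mathfrak m$, i.e.\ absolute irreducibility of $\rhobar_{\mathfrak m}$, kills the error terms; the condition $\alpha_v/\beta_v=q_v$ makes $\alpha_v$ a root of $X^2-T_vX+q_vS_v\bmod\mathfrak m$, so the generalised $\alpha_v$-eigenspace of the $U_v$-matrix on the $v$-old part is nonzero, and combined with the inductive hypothesis $H_{R\cup J}(U_J)_{\mathfrak m_J}\neq 0$ a diagram chase in the localised sequence should force $H_{R\cup J'}(U_{J'})_{\mathfrak m_{J'}}\neq 0$. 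When $q_v\not\equiv 1\bmod p$ one has $\alpha_v\neq\beta_v$, the $U_v$-matrix is regular semisimple mod $\mathfrak m$, and this is exactly the argument of \cite[\S 4.8]{ThorneA}.

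The hard part will be the case $q_v\equiv 1\bmod p$, where $\alpha_v=\beta_v$ and the $U_v$-matrix is a single nonsemisimple Jordan block modulo $\mathfrak m$. This is precisely where the standing hypothesis that $\rhobar_{\mathfrak m}(\Frob_v)$ is unipotent of order $p$ is used: it forces $\rhobar_{\mathfrak m}|_{G_{F_v}}$ to be, up to unramified twist, a nonsplit self-extension of the trivial character --- the shape of a reduced Steinberg representation --- so the ``$v$-new'' deformation condition at $v$ is still the Steinberg one and the exact sequence has the same shape as before, while also ruling out the degenerate scalar case in which the $v$-new cohomology could vanish after localisation. Making this precise requires reworking the description of the special fibre of the relevant Shimura curve at a prime with $q_v\equiv 1\bmod p$ (the more delicate level-raising/lowering flagged in \S\ref{sec_shimura_curves_and_hida_varieties}) and tracking the $p$-adic valuation of the monodromy operator, which may now be divisible by $p$; I expect that having $\rhobar_{\mathfrak m}(\Frob_v)$ of order exactly $p$ is exactly what keeps those valuations controlled. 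Everything else should be formal, using Theorem \ref{ES} and the cited results of \cite[\S 4]{ThorneA}.
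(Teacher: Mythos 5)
Your structural plan --- induct on $\#J$ over $J\subset Q$, relate $H_{R\cup J}(U_J)$ and $H_{R\cup J'}(U_{J'})$ through the special fibre of the relevant Shimura curve at $v$, localise at $\mathfrak m_{J'}=(\mathfrak m_J,\,U_v-\alpha_v)$, and use the inductive hypothesis together with $\alpha_v/\beta_v=q_v$ --- is exactly the mechanism of \cite[Lemma 4.11]{ThorneA} to which the paper appeals. But you have misdiagnosed where the difficulty lies, and this causes you to anticipate work that is not there.

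You single out $q_v\equiv 1\text{ mod }p$ as ``the hard part'', assert that the unipotent-of-order-$p$ hypothesis on $\rhobar_{\mathfrak m}(\Frob_v)$ is what rescues the argument, and expect to have to rework the special-fibre description and control $p$-adic valuations of a monodromy operator. None of this is needed for the present lemma. For the lower bound asked for here the case $q_v\equiv 1\text{ mod }p$ is strictly \emph{easier}: since $\alpha_v=\beta_v$ there is no choice to make in defining $\mathfrak m_{J'}$, whereas when $q_v\equiv -1\text{ mod }p$ one must distinguish between the two eigenvalues of $\rhobar_{\mathfrak m}(\Frob_v)$. The argument of \cite[Lemma 4.11]{ThorneA} therefore goes through unchanged, and that one remark is the paper's entire proof. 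In particular, the hypothesis that $\rhobar_{\mathfrak m}(\Frob_v)$ be unipotent of order $p$ plays no role in Lemma \ref{raising1}; it is imposed because it is needed in the \emph{upper} bound of Proposition \ref{bound} (where nontrivial unipotence produces a $k[G_{F_v}]$-submodule $A$ on which $\Frob_v-1$ is injective) and in the deformation-theoretic constructions of \S\ref{sec_another_local_deformation_problem} and \S\ref{novel}. Conflating the lower and upper bounds of Proposition \ref{bound}, and worrying about monodromy valuations that do not enter into the support question, leads you to imagine a substantially harder argument than the one that actually exists.
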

\begin{proof}
The statement is the same as \cite[Lemma 4.11]{ThorneA}, except that we now allow also places $v \in Q$ such that $q_v \equiv 1 \text{ mod }p$. The proof goes through unchanged in this case (and is slightly easier than the $q_v \equiv -1 \text{ mod }p$ case, since there is only one possibility for the choice of $\alpha_v$).
\end{proof}
We continue with the same notation and assumptions, and prove:
\begin{prop}\label{bound}
We have 
\[ 1\leq {\rm dim}_k(H_{R \cup Q}(U_Q) \otimes_\Oc k)[{\mathfrak m}_Q] \leq 4^{\#Q}
{\rm dim}_k(H_{R}(U_\emptyset) \otimes_\Oc k )[{\mathfrak m}_\emptyset]. \]
\end{prop}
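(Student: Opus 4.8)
The plan is to bootstrap from the $\#Q=0$ case by an inductive level-raising argument, adding places of $Q$ one at a time (or two at a time, to preserve parity) and tracking how the $\mathfrak{m}_J$-eigenspace grows. The key comparison is between $H_{R\cup J}(U_J)$ and $H_{R\cup J'}(U_{J'})$ when $J'=J\cup\{v,v'\}$; at each such step one expects the dimension of the localized mod-$p$ eigenspace to grow by a factor of at most $4^{2}$ for the pair $\{v,v'\}$, i.e.\ a factor of at most $4$ per place, which after iterating over all of $Q$ yields the stated bound $4^{\#Q}$. This is exactly the structure of the argument in \cite[\S 4.8]{ThorneA}, and the point here is to check that it survives the weakening of hypotheses to allow $q_v\equiv 1\bmod p$ in $Q$.

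First I would recall the local input: for $v\in Q$ the hypotheses force $\alpha_v/\beta_v=q_v$, so $\rhobar_{\mathfrak m}|_{G_{F_v}}$ admits level-raising, and when $q_v\equiv 1\bmod p$ the additional hypothesis that $\rhobar(\Frob_v)$ is unipotent of order $p$ pins down the relevant local deformation/automorphic picture (there is only one eigenvalue to choose, and the mod-$p$ local Hecke module at $v$ has the expected shape). Next, using Lemma \ref{raising1}, $\mathfrak m_Q$ is in the support of $H_{R\cup Q}(U_Q)$, which gives the lower bound $1\leq \dim_k(H_{R\cup Q}(U_Q)\otimes_\Oc k)[\mathfrak m_Q]$ immediately. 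For the upper bound, I would run the level-raising exact sequences (Mayer--Vietoris / the sequences relating $H_{R\cup J}(U_J)$ for $J$ and $J\cup\{v,v'\}$, as in \cite[\S 4.7--4.8]{ThorneA}), localize at the relevant maximal ideal, tensor with $k$, and estimate the dimension jump at each step. The degeneracy maps between the curves/Hida varieties of levels differing at $v$ contribute the factor of (at most) $4$; the content is that the cokernel and kernel terms in the localized sequence are themselves controlled by $H$ at a smaller level with a comparable multiplicity.

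The main obstacle, and the reason this is not simply a citation, is the case $q_v\equiv 1\bmod p$ with $\rhobar(\Frob_v)$ of order exactly $p$: here the local Galois cohomology $H^1(F_v,\ad^0\rhobar)$ is larger than in the generic ($q_v\not\equiv\pm 1$) case, the unramified-minus-Steinberg local picture degenerates, and one must verify that the localized cohomology of the integral model at $v$ (equivalently, the mod-$p$ Hecke module attached to the special fiber at $v$) still has the size needed for the factor-$4$ bound rather than something larger. Concretely I expect this reduces to an explicit computation of the $\mathfrak m$-localized cohomology of the relevant Shimura curve with level structure at $v$ of type $\Gamma_0(v)$ versus maximal level, or of the corresponding Hida variety, using the theory of integral models recalled from \cite[\S\S 4.4, 4.5]{ThorneA}; this is the step flagged in the introduction (``sometimes we must work harder\dots\ level lowering/raising at a prime $v$ where $q_v\equiv 1\bmod p$ and $\rhobar(\Frob_v)$ has order $p$''). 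Once that local multiplicity bound is in hand, the induction over $Q$ assembles formally into the stated inequality.
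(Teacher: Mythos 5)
Your overall skeleton is right — the lower bound is exactly Lemma \ref{raising1}, and the upper bound is an induction removing one place of $Q$ at a time (not two at a time; the paper handles both parities of $\#\overline J$ separately, and the factor is $4$ per place), with the case $q_v\not\equiv 1\bmod p$ cited directly from \cite[Proposition 4.12]{ThorneA}. But the one step that actually requires a new argument — the inductive bound at a place $v$ with $q_v\equiv 1\bmod p$ — is left as ``I expect this reduces to an explicit computation of the $\ffrm$-localized cohomology of the integral model at $v$,'' and that is a genuine gap: you have flagged the difficulty without resolving it, and the direction you point in is not where the resolution lies. The exact sequences relating $H_{R\cup J}(U_J)$, $H_{R\cup\overline J}(U'_{\overline J})$ and $H_{R\cup\overline J}(U_{\overline J})^2$ come out of \cite{ThorneA} unchanged; no new computation with integral models is needed. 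What breaks when $q_v\equiv 1\bmod p$ is the linear algebra afterwards: the kernel of the degeneracy map (resp.\ the image of the first term of the exact sequence) sits inside the subspace on which $\Frob_v^{-1}$ acts as the scalar $f(U_v)$, and when $q_v\equiv 1$ the two Hecke eigenvalues coincide, so one can no longer separate this subspace by an eigenvalue argument.

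The paper's fix uses precisely the hypothesis you did not exploit, namely that $\rhobar_\ffrm(\Frob_v)$ is unipotent \emph{of order $p$}. By Theorem \ref{ES}(3) the localized module $(H\otimes_\Oc k)[\ffrm]$ is a direct sum of copies of $\rhobar_\ffrm$ as a $k[G_F]$-module; since $\rhobar_\ffrm(\Frob_v)$ is unipotent and non-trivial, one can choose a $k[G_{F_v}]$-submodule $A$ with $(H\otimes_\Oc k)[\ffrm]=A\oplus(\Frob_v-1)A$ and $\Frob_v-1$ injective on $A$. The troublesome kernel lies in the $\Frob_v$-fixed part, hence meets $A$ trivially, so $A$ injects into the target of dimension $2\dim_k(H_{R\cup\overline J}(U_{\overline J})\otimes_\Oc k)[\ffrm_{\overline J}]$-type terms, and since $\dim A$ is exactly half the total dimension one recovers the factor $4$. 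Without this splitting (or an equivalent use of the order-$p$ hypothesis) your induction does not close, so the proposal as written does not constitute a proof of the upper bound.
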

\begin{proof}
The statement is the same as \cite[Proposition 4.12]{ThorneA}, except that we again allow here places $v \in Q$ with $q_v \equiv 1 \text{ mod }p$. The lower bound is the content of Lemma \ref{raising1}. For the upper bound, it suffices by induction to establish the statement
\[ \dim_k (H_{R \cup J}(U_J) \otimes_\Oc k)[{\mathfrak m}_J] \leq 4
\dim_k(H_{R \cup \overline{J}}(U_{\overline{J}}) \otimes_\Oc k )[{\mathfrak m}_{\overline{J}}], \]
where $J \subset Q$ is a non-empty subset and $\overline{J} = J - \{ v \}$ for some $v \in J$. If $q_v \not\equiv 1 \text{ mod }p$, then this is done in the proof of \cite[Proposition 4.12]{ThorneA}. It therefore remains to treat the case where $q_v \equiv 1 \text{ mod }p$.

Let us first treat the case where $\#\overline J$ is odd. Then, just as in \emph{loc. cit.}, we derive an exact sequence of $k[G_{F_v}]$-modules
\[ \xymatrix@1{ 0\ar[r] & (H_{R \cup J}(U_J) \otimes_\cO k)[\ffrm_J] \ar[r] & (H_{R \cup \overline{J}}(U'_{\overline{J}})_{\ffrm_J} \otimes_\cO k)[\ffrm_J] \ar[r] & (H_{R \cup \overline{J}}(U_{\overline{J}})_{\ffrm_{\overline{J}}}^2 \otimes_\cO k)[\ffrm_{\overline{J}}]}, \]
where $\Frob_v^{-1}$ acts as $U_v$ on the first term $(H_{R \cup J}(U_J) \otimes_\cO k)[\ffrm_J]$, hence as the identity. Moreover, $(H_{R \cup \overline{J}}(U'_{\overline{J}})_{\ffrm_J} \otimes_\cO k)[\ffrm_J]$ is isomorphic to a direct sum of copies of $\overline{\rho}_\ffrm$ as $k[G_F]$-module. Since $\overline{\rho}(\Frob_v)$ is supposed unipotent non-trivial, we can find a $k[G_{F_v}]$-submodule $A \subset (H_{R \cup \overline{J}}(U'_{\overline{J}})_{\ffrm_J} \otimes_\cO k)[\ffrm_J]$ such that
\[ (H_{R \cup \overline{J}}(U'_{\overline{J}})_{\ffrm_J} \otimes_\cO k)[\ffrm_J] = A \oplus (\Frob_v - 1)A, \]
and the kernel of $\Frob_v - 1$ on $A$ is trivial. Using the above exact sequence we thus get
\[ \dim_k (H_{R \cup J}(U_J) \otimes_\cO k)[\ffrm_J] \leq \dim_k (\Frob_v - 1)A = \dim_k A \leq \dim_k (H_{R \cup \overline{J}}(U_{\overline{J}})_{\ffrm_{\overline{J}}}^2 \otimes_\cO k)[\ffrm_{\overline{J}}]. \]
This completes the proof in the case that $\#\overline J$ is odd. Now suppose that $\# \overline{J}$ is even. In this case, we can construct as in the proof of  \cite[Proposition 4.12]{ThorneA} a map
\[ (H_{R \cup J}(U_J)_{\ffrm_J} \otimes_\cO k)[\ffrm_J] \to (H_{R \cup \overline{J}}(U_{\overline{J}})_{\ffrm_{\overline{J}}} \otimes_\cO k)^2[\ffrm_J], \]
with kernel contained in the submodule
\[ (H_{R \cup J}(U_J)^{I_{F_v}}_{\ffrm_J} \otimes_\cO k) [\ffrm_J] \subset (H_{R \cup J}(U_J)_{\ffrm_J} \otimes_\cO k)[\ffrm_J], \]
on which $\Frob_v^{-1}$ acts trivially. Moreover, $(H_{R \cup J}(U_J)_{\ffrm_J} \otimes_\cO k)[\ffrm_J]$ is isomorphic as $k[G_{F}]$-module to a direct sum of copies of $\overline{\rho}_\ffrm$. Again using the fact that $\overline{\rho}_\ffrm(\Frob_v)$ is unipotent, we can find a $k[G_{F_v}]$-submodule $A \subset (H_{R \cup J}(U_J)_{\ffrm_J} \otimes_\cO k)[\ffrm_J]$ on which $\Frob_v - 1$ is injective and such that $(H_{R \cup J}(U_J)_{\ffrm_J} \otimes_\cO k)[\ffrm_J] = A \oplus (\Frob_v - 1)A$. Then $A$ has trivial intersection with the kernel of the above map, and we obtain
\[ \begin{split} \dim_k (H_{R \cup J}(U_J)_{\ffrm_J} \otimes_\cO k)[\ffrm_J] = 2 \dim_k A & \leq 2 \dim_k (H_{R \cup \overline{J}}(U_{\overline{J}})_{\ffrm_{\overline{J}}} \otimes_\cO k)^2[\ffrm_J]\\ & \leq 4 \dim_k (H_{R \cup \overline{J}}(U_{\overline{J}})_{\ffrm_{\overline{J}}} \otimes_\cO k)[\ffrm_{\overline{J}}],\end{split} \]
as desired.
\end{proof}

\subsection{Level lowering mod $p^N$}\label{lowering}
We now state an analogue of \cite[Theorem 4.14]{ThorneA} which covers the cases relevant for this paper.  We fix again a finite set $R$ of finite places of $F$ of even cardinality, disjoint from $S_p \cup \{ a \}$ and a good subgroup $U \in \cJ_R$. If $Q$ is any finite set of finite places of $F$, disjoint from $S_p \cup \{ a \} \cup R$ such that for each $v \in Q$, $U_v = \GL_2(\cO_{F_v})$, then we define $U_Q \in \cJ_{R \cup Q}$ by replacing $U_v$ for $v \in Q$ by the unique maximal compact subgroup of $G_{R \cup Q}(F_v)$ (cf. \cite[\S 4.9]{ThorneA}).

Let $S$ be a finite set of finite places of $F$, containing $S_p$, such that $U_v = \GL_2(\cO_{F_v})$ for all $v \not\in S$. Let $\ffrm \subset \bbT^{S, \text{univ}}$ be a non-Eisenstein maximal ideal which is in the support of $H_R(U)$.
\begin{theorem}\label{thm_level_lowering_mod_p_to_the_N}
Let $m, r \geq 0$ be  fixed integers and consider an integer $N \geq 2mr$, and a lifting of $\rhobar_{\mathfrak m}$ to a continuous representation $\rho:G_F \ra \GL_2(\Oc/\lambda^N)$. We assume that $\rho$ satisfies the following properties:
\begin{enumerate} \item $\rho$ is unramified outside $S$.
\item There exists a set $Q$ as above of even cardinality  and a homomorphism $f:\T^{S \cup Q}_Q(H_{R \cup Q}(U_Q)) \ra \Oc/\lambda^N$ satisfying:
\begin{enumerate} \item For each $v \in Q$, $q_v \not\equiv 1 \text{ mod }\lambda^{m+1}$. There are exactly $r$ places of $Q$  such that $q_v \equiv 1 \text{ mod }p$.
\item For every $v \notin S \cup Q$ we have $f(T_v)=\tr \rho(\Frob_v)$.
\item Let $I=\ker(f)$. Then $(H_{R \cup Q}(U_Q) \otimes_\Oc\Oc/\lambda^N)[I]$ contains a submodule isomorphic to $\Oc/\lambda^N$.
\end{enumerate}
\end{enumerate}
Then there exists a homomorphism $f':\T^{S \cup Q}(H_R(U)) \ra \Oc/\lambda^{N-2mr}$ such that for all $v \notin S \cup Q$, we have $f'(T_v)=\tr \rho(\Frob_v)$. 
\end{theorem}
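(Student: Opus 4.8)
The plan is to adapt the level-lowering argument of \cite[Theorem 4.14]{ThorneA} to the present setting, the essential new feature being that the set $Q$ is permitted to contain $r$ places $v$ with $q_v \equiv 1 \text{ mod }p$, and we must track carefully how much $p$-divisibility is lost when passing from level $R \cup Q$ down to level $R$. First I would set up the same inductive framework: writing $Q = \{v_1, \dots, v_s\}$, I would remove the places one at a time, at each stage comparing the Hecke module $H_{R \cup J}(U_J)$ with $H_{R \cup \overline J}(U_{\overline J})$ for $\overline J = J - \{v\}$, using the degeneracy maps between Shimura curves (or Hida varieties) of adjacent levels, exactly as in \emph{loc.\ cit.} The Galois-module decomposition of these cohomology groups (Theorem \ref{ES}(3)) and the Eichler--Shimura relation feed in to identify the cokernels of the degeneracy maps with explicit $k[G_{F_v}]$-modules.

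The key point is a quantitative one at each place $v \in Q$ with $q_v \equiv 1 \text{ mod }p$: here the relevant ``Frobenius eigenvalue ratio'' obstruction, which in the $q_v \not\equiv 1$ case is killed because $\alpha_v/\beta_v = q_v \not\equiv 1$, now degenerates, and one instead exploits that $\overline\rho_{\ffrm}(\Frob_v)$ is unipotent of order $p$ (this is precisely the hypothesis imposed in \S\ref{raising}). Using the matrix lemma (Lemma \ref{eigenvalues}) applied to the action of $\Frob_v$ on the lifted module $H_{R \cup J}(U_J) \otimes \cO/\lambda^N$, and the condition $q_v \not\equiv 1 \text{ mod }\lambda^{m+1}$ from hypothesis (2)(a), I would show that the map descending one level has kernel and cokernel annihilated by $\lambda^{2m}$ — the factor $2$ coming from there being two degeneracy maps (or equivalently from the two eigenlines, each contributing a $\lambda^m$ error, as in the level-raising bound Proposition \ref{bound}). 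Running the induction over all $r$ such places accumulates a total loss of $\lambda^{2mr}$, which is exactly the $N - 2mr$ appearing in the conclusion; the remaining $s - r$ places with $q_v \not\equiv 1 \text{ mod }p$ contribute no loss, as in the original argument.

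Concretely, for each $v$ with $q_v \equiv 1 \text{ mod }p$ I expect the exact sequence to take the shape
\[ 0 \to (H_{R\cup J}(U_J) \otimes \cO/\lambda^N)[I_J] \to (H_{R\cup \overline J}(U'_{\overline J})_{\ffrm_J} \otimes \cO/\lambda^N)[I_J] \xrightarrow{\ \delta\ } (H_{R \cup \overline J}(U_{\overline J})^2 \otimes \cO/\lambda^N)[I_{\overline J}], \]
where $\delta$ is built from the pair of degeneracy maps together with the $U_v$-operator relation, and the composite $U_v$-relation $(\Frob_v - 1)$ has the property that $\lambda^{2m}$-times a lift of any $I_{\overline J}$-eigenclass lies in its image. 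Tracking through, a free $\cO/\lambda^N$-submodule of $(H_{R\cup Q}(U_Q) \otimes \cO/\lambda^N)[I]$ produces, after removing all of $Q$, a free $\cO/\lambda^{N-2mr}$-submodule of $H_R(U) \otimes \cO/\lambda^{N-2mr}$ stable under (and generating a quotient of) $\bbT^{S \cup Q}(H_R(U))$; the associated Hecke eigensystem $f'$ satisfies $f'(T_v) = f(T_v) = \tr\rho(\Frob_v)$ for $v \notin S \cup Q$ by construction. The main obstacle I anticipate is the bookkeeping at the even/odd parity switches of $\#\overline J$ — the geometry of the degeneracy maps differs between the Shimura-curve and Hida-variety cases, so the $\lambda^{2m}$ bound must be verified separately in each parity, much as Proposition \ref{bound} splits into the two cases $\#\overline J$ odd and $\#\overline J$ even; getting the error exponent to be exactly $2m$ rather than something larger in the even case (where one has a two-step rather than one-step comparison) is the delicate part.
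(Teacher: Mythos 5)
Your proposal follows essentially the same route as the paper: the theorem is reduced by induction to a one-place-at-a-time statement (Proposition \ref{prop_inductive_step_in_mod_p_n_level_lowering}) whose proof splits on the parity of $\#Q$, uses the degeneracy-map exact sequences and the decomposition of the $[I]$-torsion as $U_0 \otimes_\cO \rho$, and applies Lemma \ref{eigenvalues} with the hypothesis $q_v \not\equiv 1 \bmod \lambda^{m+1}$ to produce an eigenvector $e$ with $\lambda^{N-m-1}e \neq 0$ whose image at the lower level loses at most a further $\lambda^{m}$, so that each place with $q_v \equiv 1 \bmod p$ costs $\lambda^{2m}$ and the others cost nothing, accumulating to $\lambda^{2mr}$. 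One small correction: unlike in the level-raising section, the unipotency of $\overline{\rho}_{\ffrm}(\Frob_v)$ is neither assumed nor needed here (the paper notes this explicitly after the theorem statement); the eigenvalue separation comes entirely from $q_v \not\equiv 1 \bmod \lambda^{m+1}$, which is in fact the mechanism your argument actually uses.
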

(In the applications of this result in this paper, we will take $m = 1$.) Unlike in the previous section, we do not need the image of $\overline{\rho}_\ffrm(\Frob_v)$ to be  of order $p$  at those places $v \in Q$ such that $q_v \equiv 1 \text{ mod }p$. Just as in \cite[\S 4.9]{ThorneA}, this theorem has the following corollary.
\begin{cor}\label{approximation}
 Let $\rho:G_F \ra \GL_2(\Oc)$ be a continuous lifting of $\rhobar_{\mathfrak m}$ unramified outside $S$, and let $m, r \geq 0$ be integers. Suppose that for every $N \geq 2mr$ there exists a set $Q$ as above, of even cardinality, and a homomorphism $f:\T^{S \cup Q}_Q(H_{R \cup Q}(U_Q)) \ra \Oc/\lambda^N$ satisfying:
\begin{enumerate} \item For each $v \in Q$, $q_v \not\equiv 1 \text{ mod }\lambda^{m+1}$. There are exactly $r$ places of $Q$  such that $q_v \equiv 1 \text{ mod }p$.
\item For every finite place $v \notin S \cup Q$ of $F$, we have $f(T_v)=\tr \rho(\Frob_v)$.
\item Let $I=\ker(f)$. Then $(H_{R \cup Q}(U_Q)\otimes \Oc/\lambda^N))[I]$ contains a submodule isomorphic to $\Oc/\lambda^N$.
 \end{enumerate}
 Then $\rho$ is automorphic: there exists a cuspidal automorphic representation $\pi$ of $\GL_2(\A_F)$ of weight 2, and an isomorphism $\rho \otimes \aQp \cong r_\iota(\pi)$.
\end{cor}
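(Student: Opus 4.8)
\section*{Proof proposal for Corollary \ref{approximation}}

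The plan is to feed the hypotheses directly into Theorem \ref{thm_level_lowering_mod_p_to_the_N} and then run a standard approximation argument. Fix the lift $\rho : G_F \to \GL_2(\Oc)$. For each integer $N \geq 2mr$, the hypotheses of the corollary produce a set $Q = Q_N$ (of even cardinality, disjoint from $S_p \cup \{a\} \cup R$, with $q_v \not\equiv 1 \bmod \lambda^{m+1}$ for $v \in Q_N$ and exactly $r$ places of $Q_N$ satisfying $q_v \equiv 1 \bmod p$) together with a homomorphism $f_N : \T^{S\cup Q_N}_{Q_N}(H_{R\cup Q_N}(U_{Q_N})) \to \Oc/\lambda^N$ with $f_N(T_v) = \tr\rho(\Frob_v)$ for $v \notin S \cup Q_N$ and with $(H_{R\cup Q_N}(U_{Q_N}) \otimes_\Oc \Oc/\lambda^N)[\ker f_N]$ containing a copy of $\Oc/\lambda^N$. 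These are precisely conditions (1) and (2)(a)--(c) of Theorem \ref{thm_level_lowering_mod_p_to_the_N} applied to the reduction $\rho \bmod \lambda^N$ (which is unramified outside $S$), so the theorem yields a homomorphism $f'_N : \T^{S\cup Q_N}(H_R(U)) \to \Oc/\lambda^{N - 2mr}$ with $f'_N(T_v) = \tr\rho(\Frob_v)$ for all $v \notin S \cup Q_N$. As $N \to \infty$ the precision $N - 2mr$ tends to infinity.

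Next I would pass to the limit, the main nuisance being that the auxiliary sets $Q_N$ vary with $N$, so a priori the $f'_N$ live on different Hecke algebras. This is resolved as follows. Since $\ffrm$ is non-Eisenstein there is, as in Theorem \ref{ES} (and its even-$\#R$ analogue via Jacquet--Langlands), a continuous representation $\rho_\ffrm : G_F \to \GL_2(\T^S(H_R(U))_\ffrm)$, unramified outside $S$, with $\tr\rho_\ffrm(\Frob_v) = T_v$ and $\det\rho_\ffrm(\Frob_v) = q_v S_v$ for $v \notin S$. Because $\rho_\ffrm$ factors through $G_{F,S}$ and $\{\Frob_v : v \notin S \cup Q_N\}$ is dense in $G_{F,S}$ by the Chebotarev density theorem, the functions $\tr\rho_\ffrm$ and $\det\rho_\ffrm$ are determined by their values at these Frobenii; as $\T^{S\cup Q_N}(H_R(U))_\ffrm$ is $\lambda$-adically complete it follows that both $T_v$ and $S_v$ lie in $\T^{S\cup Q_N}(H_R(U))_\ffrm$ for every $v \in Q_N$, i.e.\ $\T^S(H_R(U))_\ffrm = \T^{S\cup Q_N}(H_R(U))_\ffrm$. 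Thus each $f'_N$ may be regarded as a homomorphism $\T^S(H_R(U))_\ffrm \to \Oc/\lambda^{N-2mr}$, and composing $\rho_\ffrm$ with $f'_N$ gives a lift of $\rhobar_\ffrm$ over $\Oc/\lambda^{N-2mr}$ whose trace at $\Frob_v$ agrees with that of $\rho \bmod \lambda^{N-2mr}$ for all $v \notin S \cup Q_N$, hence—$\rhobar_\ffrm$ being absolutely irreducible, so that a representation into a local ring is determined by its trace—with $\rho \bmod \lambda^{N-2mr}$ itself; in particular $f'_N(T_v) = \tr\rho(\Frob_v)$ for \emph{all} $v \notin S$. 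Now $\T^S(H_R(U))_\ffrm$ is a reduced finite flat $\Oc$-algebra (it acts faithfully on a module which is semisimple over its generic fibre) with only finitely many $\aQp$-points, so a standard approximation argument—the $f'_N$ have increasing $\lambda$-adic precision and a common reduction mod $\lambda$—shows that they accumulate at one such point: there is a homomorphism $g : \T^S(H_R(U))_\ffrm \to \aQp$ with $g(T_v) = \tr\rho(\Frob_v)$ for all $v \notin S$.

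Finally, $g$ corresponds, via Jacquet--Langlands from the quaternion algebra $B_R$ (using that $\#R$ is even) together with the correspondence between Hecke eigensystems on $H_R(U)_\ffrm$ and automorphic representations, to a cuspidal automorphic representation $\pi$ of $\GL_2(\bbA_F)$ of weight $2$, for which $\tr r_\iota(\pi)(\Frob_v) = g(T_v) = \tr\rho(\Frob_v)$ for all $v \notin S$. Both $\rho \otimes \aQp$ and $r_\iota(\pi)$ are continuous and semisimple—indeed $\rho$ is absolutely irreducible, being a lift of the absolutely irreducible $\rhobar_\ffrm$—and their traces agree on the dense set $\{\Frob_v : v \notin S\}$, so by the Brauer--Nesbitt theorem and the Chebotarev density theorem $\rho \otimes \aQp \cong r_\iota(\pi)$, as claimed. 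The serious input here is Theorem \ref{thm_level_lowering_mod_p_to_the_N}, which is assumed; granting it, the only delicate point is the bookkeeping in the second paragraph—reconciling the $N$-dependent auxiliary sets $Q_N$ and upgrading agreement mod $\lambda^{N-2mr}$ to an exact match with a classical automorphic point—but this is routine and proceeds exactly as in \cite[\S 4.9]{ThorneA}.
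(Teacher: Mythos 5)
Your argument is correct and is exactly the deduction the paper has in mind when it says the corollary follows "just as in [ThorneA, \S 4.9]": apply Theorem \ref{thm_level_lowering_mod_p_to_the_N} for each $N$, use the Galois representation over the Hecke algebra together with Carayol's lemma and Chebotarev to make the resulting eigensystems live on the fixed algebra $\T^S(H_R(U))_\ffrm$ and match $\tr\rho(\Frob_v)$ at \emph{all} $v\notin S$, and then use finiteness of that $\cO$-algebra to extract a characteristic-zero point, which Jacquet--Langlands and Brauer--Nesbitt convert into the desired $\pi$. No substantive deviation from the paper's intended proof.
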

Just as in  \cite[\S 4.9]{ThorneA}, Theorem \ref{thm_level_lowering_mod_p_to_the_N} follows by induction from the following result.
\begin{prop}\label{prop_inductive_step_in_mod_p_n_level_lowering}
Let $m \geq 0$ be an integer and consider an integer $N \geq 1$ and a lifting of $\rhobar_{\mathfrak m}$ to a continuous representation $\rho:G_F \ra \GL_2(\Oc/\lambda^N)$. We assume that $\rho$ satisfies the following properties:
\begin{enumerate} \item $\rho$ is unramified outside $S$.
\item There exists a set $Q$ as above of even cardinality  and a homomorphism $f:\T^{S \cup Q}_Q(U_Q) \ra \Oc/\lambda^N$ satisfying:
\begin{enumerate} \item For each $v \in Q$, $q_v \not\equiv 1 \text{ mod }\lambda^{m+1}$.
\item For every place $v \notin S \cup Q$ we have $f(T_v)=\tr \rho(\Frob_v)$.
\item Let $I=\ker(f)$. Then $(H_{R \cup Q}(U_Q) \otimes_\Oc\Oc/\lambda^N)[I]$ contains a submodule isomorphic to $\Oc/\lambda^N$.
\end{enumerate}
\end{enumerate}
Choose $v \in Q$, and let $\overline Q=Q - \{v\}$. Then there exists a homomorphism $f':\T^{S \cup Q}_{\overline Q}(H_{R\cup \overline Q}(U_{\overline Q})) \ra \Oc/\lambda^{N-2m}$ such that for all $v \notin S \cup Q$, we have $f'(T_v)=\tr \rho(\Frob_v)$. Moreover writing $I'=\ker f'$, $(H_{R \cup \overline Q}(U_{\overline Q}) \otimes_\Oc\Oc/\lambda^N)[I']$ contains a submodule isomorphic to $\Oc/\lambda^{N-2m}$.
\end{prop}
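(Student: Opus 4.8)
I would follow the proof of the inductive step in \cite[Theorem~4.14]{ThorneA} closely; the only genuinely new feature is that the chosen place $v$ is now allowed to satisfy $q_v\equiv 1\bmod p$, for which --- unlike in the level-raising results of \S\ref{raising} --- no hypothesis on $\overline{\rho}_\ffrm(\Frob_v)$ is required. As in \emph{loc.\ cit.}\ I would distinguish two cases according to the parity of $\#\overline Q$: when $\#\overline Q$ is even, $H_{R\cup\overline Q}(U_{\overline Q})$ is the degree-$0$ cohomology of a Hida variety and $H_{R\cup Q}(U_Q)$ the degree-$1$ cohomology of a Shimura curve, and when $\#\overline Q$ is odd these are interchanged; the two cases are formally parallel, so I describe the first.

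The first step is to set up the level-change maps at $v$. Since $v\in Q$ and $U_v=\GL_2(\cO_{F_v})$, passing from $R\cup\overline Q$ to $R\cup Q$ adjoins $v$ to the ramification set of the quaternion algebra while replacing $U_v$ by the unique maximal compact; by Jacquet--Langlands this identifies a suitable localization of $H_{R\cup Q}(U_Q)$ with the $v$-new part of the cohomology of the Shimura curve of level $U_{\overline Q}$ with an Iwahori subgroup inserted at $v$, exactly as in \cite[\S4.8]{ThorneA}. Composing with the two classical degeneracy maps between $\GL_2(\cO_{F_v})$-level and Iwahori-level produces Hecke-equivariant maps between suitable localizations of $H_{R\cup Q}(U_Q)$ and $H_{R\cup\overline Q}(U_{\overline Q})^{\oplus 2}$ in both directions, whose composites are multiplication by explicit elements of the Hecke algebra built from $T_v$, $S_v$, $U_v$ and $q_v$.

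The crux is to bound the $\lambda$-adic valuation of these composite operators after applying $f$ (or the $f'$ to be constructed). Here the hypothesis $q_v\not\equiv 1\bmod\lambda^{m+1}$ enters: the composite is, up to a unit, a product of two factors each divisible by a bounded power of $q_v-1$ --- one factor from the Jacquet--Langlands comparison with the quaternionic cohomology, one from the Iwahori-to-spherical degeneracy --- so its valuation is at most $2m$, and only an upper bound is needed. Granting this, one concludes by the commutative algebra of \cite[\S4.9]{ThorneA}. The system of Hecke eigenvalues $\{T_w\mapsto\tr\rho(\Frob_w)\ (w\notin S\cup Q),\ U_w\ (w\in\overline Q),\ \dots\}$ cut out by $f$ in $\cO/\lambda^N$ is shown, using the degeneracy maps and the valuation bound, to occur already in $H_{R\cup\overline Q}(U_{\overline Q})$ modulo $\lambda^{N-2m}$, giving a homomorphism $f'$ with $f'(T_w)=\tr\rho(\Frob_w)$. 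Then the given free rank-one $\cO/\lambda^N$-submodule of $(H_{R\cup Q}(U_Q)\otimes_\cO\cO/\lambda^N)[I]$ is pushed through the degeneracy maps, its image being non-annihilated by $\lambda^{N-2m}$ by the same bound, and the description of the cohomology as a $\rho_\ffrm$-module from Theorem~\ref{ES} upgrades this to a genuine free rank-one $\cO/\lambda^{N-2m}$-submodule of $(H_{R\cup\overline Q}(U_{\overline Q})\otimes_\cO\cO/\lambda^N)[I']$ with $I'=\ker f'$.

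The step I expect to be the main obstacle is precisely the valuation bound: one must pin down which Hecke operators occur in the composites, compute their action on the local factor at $v$ (which, now that $q_v\equiv 1\bmod p$ is permitted, may fail to be semisimple), and verify the total loss is $2m$ and no worse. This is the error term flagged in the introduction, and it is exactly why the applications take $m=1$, i.e.\ impose $q_v\not\equiv 1\bmod p^2$. A secondary, purely bookkeeping difficulty is keeping the two parity cases ($H^0$ of a Hida variety versus $H^1$ of a Shimura curve) synchronized and ensuring the ``moreover'' clause emerges with the same exponent $N-2m$.
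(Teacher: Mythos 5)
Your overall architecture matches the paper's: reduce to \cite[Proposition 4.16]{ThorneA} when $q_v \not\equiv 1 \bmod p$, and in the new case $q_v \equiv 1 \bmod p$ split according to the parity of $\# Q$ and use the exact sequences relating the $v$-new cohomology, the Iwahori-level cohomology, and two copies of the spherical-level cohomology. But the step you flag as ``the main obstacle'' is precisely the step the paper resolves, and it does so by a mechanism different from the one you sketch; moreover your sketch as written would fail. In the case $\# Q$ even, the given free rank-one $\cO/\lambda^N$-submodule of $(H_{R\cup Q}(U_Q)\otimes_\cO\cO/\lambda^N)[I]$ is contained in the \emph{kernel} of the map to $(H_{R\cup\overline Q}(U_{\overline Q})^2_{\ffrm_Q}\otimes_\cO\cO/\lambda^N)[I]$ --- that is what it means to be new at $v$ --- so ``pushing it through the degeneracy maps'' yields zero, not an element of order $\lambda^{N-2m}$. (In the odd case the analogous problem occurs: the kernel of the map to lower level is the inertia-invariant piece on which $\Frob_v^{-1}$ acts by $f(U_v)$, and nothing in your argument prevents the given submodule from lying inside it.)

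What the paper does instead is use the Galois action. The Iwahori-level module $(H_{R\cup\overline Q}(U'_{\overline Q})_{\ffrm_Q}\otimes_\cO\cO/\lambda^N)[I]$ is isomorphic to $U_0\otimes_\cO\rho$ as a Galois module (via Theorem \ref{ES}); hypothesis 2(c) forces $U_0$ to contain an element of order $\lambda^N$, so $\rho(\Frob_v)$ acts on a free rank-two $\cO/\lambda^N$-summand with characteristic polynomial $(X-f(U_v))(X-q_v^{-1}f(U_v))$, whose roots differ by a unit times $q_v-1$, of valuation at most $m$ by hypothesis 2(a). Lemma \ref{eigenvalues} then produces an eigenvector $e$ for the eigenvalue $q_v^{-1}f(U_v)$ with $\lambda^{N-m-1}e\neq 0$. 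Since the kernel of the degeneracy map is exactly the locus where $\Frob_v^{-1}$ acts by the \emph{other} eigenvalue $f(U_v)$, the intersection of $\cO\cdot e$ with that kernel is killed by $\lambda^m$, and hence the image of $e$ at lower level generates a submodule $A$ with $\lambda^{N-2m-1}A\neq 0$; the homomorphism $f'$ is the eigensystem attached to $A$. Both losses of $m$ thus come from $\val_p$ of $q_v-1$, which is the correct accounting of the total loss $2m$; your proposed factorization of a composite of degeneracy operators into ``one factor from the Jacquet--Langlands comparison and one from the Iwahori-to-spherical degeneracy'' is neither verified nor the relevant computation. To repair your write-up, replace the ``push the given submodule through'' step by this Frobenius-eigenvector argument: the given submodule serves only to certify that $U_0\otimes\rho$ contains a copy of $\rho$ over $\cO/\lambda^N$ to which Lemma \ref{eigenvalues} can be applied.
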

\begin{proof}
If $q_v \not\equiv 1 \text{ mod }p$, then this is \cite[Proposition 4.16]{ThorneA}. We treat the case where $q_v \equiv 1 \text{ mod }p$, splitting into cases according to the parity of $\# Q$. Let us first suppose that $\# Q$ is even. Let $\ffrm_Q \subset \bbT_Q^{S \cup Q, \text{univ}}$ denote the ideal generated by $I$ and $\lambda$. Just as in the proof of \cite[Proposition 4.16]{ThorneA}, we obtain a short exact sequence
\[ 0 \to (H_{R \cup Q}(U_Q)_{\ffrm_Q} \otimes_\cO \cO/\lambda^N)[I] \to (H_{R \cup \overline{Q}}(U_{\overline{Q}}')_{\ffrm_Q} \otimes_\cO \cO/\lambda^N)^{I_{F_{v}}}[I] \to (H_{R \cup \overline{Q}}(U_{\overline{Q}})^2_{\ffrm_Q} \otimes_\cO \cO/\lambda^N)[I], \]
an $\cO/\lambda^N$-module $U_0$, and an isomorphism $(H_{R \cup \overline{Q}}(U_{\overline{Q}}')_{\ffrm_Q} \otimes_\cO \cO/\lambda^N)[I] \cong U_0 \otimes_\cO \rho$. Moreover, $\Frob_v^{-1}$ acts as the scalar $f(U_v)$ on the first term $(H_{R \cup Q}(U_Q)_{\ffrm_Q} \otimes_\cO \cO/\lambda^N)[I]$ in the above exact sequence. By Lemma \ref{eigenvalues}, there exists an element $e \in (H_{R \cup \overline{Q}}(U_{\overline{Q}}')_{\ffrm_Q} \otimes_\cO \cO/\lambda^N)[I]$ such that $\lambda^{N-m-1} e \neq 0$ and $\Frob_v^{-1} e = q_v^{-1} f(U_v) e$.

The intersection of $\cO \cdot e$ with $(H_{R \cup Q}(U_Q)_{\ffrm_Q} \otimes_\cO \cO/\lambda^N)[I]$ is killed by $\lambda^m$, so the image of $e$ in $(H_{R \cup \overline{Q}}(U_{\overline{Q}})^2_{\ffrm_Q} \otimes_\cO \cO/\lambda^N)[I]$ generates an $\cO$-submodule $A$ such that $\lambda^{N - 2m - 1} A \neq 0$. The proof is completed in this case by taking $f'$ to be the homomorphism $\bbT^{S \cup Q}_{\overline{Q}} \to \cO / \lambda^{N - 2m}$ which is associated to $A$.

We now treat the case where $\# Q$ is odd. Let $\ffrm_Q$ be as before. Just as in the proof of \cite[Proposition 4.16]{ThorneA}, we obtain a morphism of $\bbT_Q^{S \cup Q, \text{univ}}[G_{F_v}]$-modules
\[ (H_{R \cup Q}(U_Q) \otimes_\cO \cO/\lambda^N)_{\ffrm_Q}[I] \to (H_{R \cup \overline{Q}}(U_{\overline{Q}}) \otimes_\cO \cO/\lambda^N)_{\ffrm_Q}^2[I], \]
with kernel contained in the submodule
\[ (H_{R \cup Q}(U_Q)^{I_{F_{v}}} \otimes_\cO \cO/\lambda^N)_{\ffrm_Q}[I] \subset (H_{R \cup Q}(U_Q) \otimes_\cO \cO/\lambda^N)_{\ffrm_Q}[I],  \]
on which $\Frob_v^{-1}$ acts by $f(U_v) \in \cO/\lambda^N$. There is also an $\cO/\lambda^N$-module $U_0$ and an isomorphism
\[(H_{R \cup Q}(U_Q) \otimes_\cO \cO/\lambda^N)[I] \cong U_0 \otimes_\cO \rho. \]
By Lemma \ref{eigenvalues}, we can find an element $e \in (H_{R \cup Q}(U_Q) \otimes_\cO \cO/\lambda^N)_{\ffrm_Q}[I]$ such that $\lambda^{N-m-1} e \neq 0$ and $\Frob_v^{-1} e = q_v^{-1} f(U_v) e$. The same argument as in the case $\# Q$ even now shows that the image of $e$ in $(H_{R \cup \overline{Q}}(U_{\overline{Q}}) \otimes_\cO \cO/\lambda^N)_{\ffrm_Q}^2[I]$ satisfies $\lambda^{N - 2m - 1} e \neq 0$, and leads to the desired homomorphism $f'$.
\end{proof}

\begin{lemma}\label{types}
Let $\pi$ be a cuspidal automorphic representation  of $\GL_2(\A_F)$ of weight 2. Suppose that for each finite place $v \notin S_p$ of $F$, either $\pi_v$ is unramified or $q_v \equiv 1 \text{ mod }p$ and $\pi_v$ is an unramified twist of the Steinberg representation, while for every $v \in S_p$, $\pi_v$ is $\iota$-ordinary and $\pi_v^{U_0(v)} \neq 0$. Suppose furthermore  that $\overline{r_\iota(\pi)}|_{G_{F(\zeta_p)}}$ is irreducible. Let $\sigma \subset S_p$ be a subset. Then there exists a cuspidal automorphic representation $\pi'$ of $\GL_2(\A_F)$ of weight 2 satisfying the following conditions:
 \begin{enumerate}
\item There is an isomorphism of residual representations $\overline{r_\iota(\pi)}= \overline{r_\iota(\pi')}$, and $\pi,\pi'$ have the same central character.
\item If $v \in \sigma$, then $\pi'_v$ is $\iota$-ordinary. If $v \in S_p -\sigma$, then $\pi'_v$ is supercuspidal.
\item If $v$ is place of $F$ that  does not divide  $ p\infty$  and $\pi_v$ is unramified, then $\pi'_v$ is unramified. If $\pi_v$ is  ramified, then $\pi'_v$ is ramified principal series representation. 
\end{enumerate}
\end{lemma}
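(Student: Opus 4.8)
The strategy is to modify $\pi$ one place at a time, using the level-raising and level-lowering machinery of Section~\ref{sec_shimura_curves_and_hida_varieties} together with standard solvable base change and switching between inner forms. At the non-$p$-adic ramified places $v$, the hypothesis says $\pi_v$ is an unramified twist of Steinberg with $q_v \equiv 1 \bmod p$; here the goal is to replace the Steinberg local component by a ramified principal series with the same central character. At $p$-adic places, the goal is to change $\pi_v$ from $\iota$-ordinary $\St$-type to supercuspidal for $v \in S_p - \sigma$ while leaving it $\iota$-ordinary for $v \in \sigma$. Both modifications preserve the residual representation and the central character.

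First I would pass to the relevant inner form: choose a quaternion algebra $B$ over $F$ ramified at exactly the set of infinite places making its degree even, together with an appropriate finite set $R$ (which may be empty if $[F:\bbQ]$ is already even, or chosen via a preliminary quadratic base change to make $[F:\bbQ]$ even — noting that $\overline{r_\iota(\pi)}|_{G_{F(\zeta_p)}}$ irreducible is preserved under totally real base change). Transfer $\pi$ to an automorphic form on the corresponding group $G_R$, producing a non-Eisenstein maximal ideal $\ffrm$ of $\bbT^{S,\text{univ}}$ in the support of $H_R(U)$ for a suitable good $U$. Then, working one ramified place $v \notin S_p$ at a time: since $q_v \equiv 1 \bmod p$ and $\overline{r_\iota(\pi)}(\Frob_v)$ is unipotent (forced by the $\St$-type local component and local–global compatibility), Lemma~\ref{raising1} and Proposition~\ref{bound} show that the appropriate maximal ideal is in the support of the cohomology of the form of level corresponding to a ramified principal series type at $v$; picking off an automorphic constituent and checking that one can arrange the central character to match (an unramified twist suffices since the residue characteristic of $k(v)$ is prime to $p$) gives the replacement. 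The $p$-adic places are handled by the known existence of ordinary/supercuspidal automorphic liftings with prescribed residual representation — concretely, one invokes the construction of \S\ref{ordinary} (via the $\GL_4$ result and the techniques of \cite{Bar12}) to produce an automorphic form $\pi'$ with $\overline{r_\iota(\pi')} = \overline{r_\iota(\pi)}$ that is $\iota$-ordinary at $v \in \sigma$ and supercuspidal at $v \in S_p - \sigma$, while still of $\St$-type or principal-series type at the finite places away from $p$.

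The main obstacle will be coordinating the global changes at several places simultaneously while controlling the central character and ensuring the residual representation genuinely does not change. Concretely: (i) the level-raising/lowering arguments are phrased on inner forms and only directly give an automorphic representation of $\GL_2(\bbA_F)$ after Jacquet–Langlands transfer, so one must check the transferred $\pi'$ has the asserted local components everywhere — in particular that a ramified $\St$-component on the quaternion side corresponds to the desired ramified principal series on $\GL_2$; (ii) if a base change was used to make $[F:\bbQ]$ even, one must descend $\pi'$ back to $F$ using cyclic descent, which requires the quadratic extension to be linearly disjoint from the field cut out by $\overline{r_\iota(\pi)}$ so that descent does not destroy cuspidality or the residual image — here the flexibility to choose the auxiliary quadratic field keeps this harmless; (iii) matching central characters exactly, rather than up to an unramified twist, may require twisting $\pi'$ by a suitable finite-order Hecke character trivial at $p$ and at the relevant places, and checking this twist leaves $\overline{r_\iota(\pi')}$ unchanged (it does, being trivial mod $p$). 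Once these bookkeeping issues are dispatched, assembling the pieces is routine, and the result follows.
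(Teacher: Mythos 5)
Your proposal does not follow the paper's route and has genuine gaps. The paper's proof of Lemma \ref{types} is a two-line citation: the statement is \cite[Lemma 5.25]{ThorneA} (a prescribed-type lifting result proved there by Galois-deformation-theoretic methods, producing an automorphic lift of $\overline{r_\iota(\pi)}$ over $F$ itself with prescribed local types at all of $S_p\cup R$ simultaneously), and the only new content is that the hypothesis of \emph{loc.\ cit.} is weakened to ``$\overline{r_\iota(\pi)}|_{G_{F(\zeta_p)}}$ irreducible'', which the authors justify by observing that the stronger hypothesis enters only through the construction of the auxiliary place $a$, for which Lemma \ref{trivial} suffices. Your reconstruction from the internal machinery of \S\ref{sec_shimura_curves_and_hida_varieties} and \S\ref{ordinary} fails at the two hardest points of the lemma. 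First, Lemma \ref{raising1} and Proposition \ref{bound} cannot produce a ramified principal series component at $v\nmid p$: they concern the quaternion algebra \emph{ramified} at $v\in Q$ with maximal compact level there, so by Jacquet--Langlands the automorphic representations they detect have local component at $v$ an unramified twist of $\St_2$ (or supercuspidal), never a ramified principal series. Getting a ramified principal series at $v$ with $q_v\equiv 1\text{ mod }p$ requires a split quaternion algebra at $v$ with $U_1(v)$-type level and a nontrivial nebentypus of $p$-power order (the diamond-operator/Taylor--Wiles mechanism), which is not what \S\ref{sec_shimura_curves_and_hida_varieties} supplies and which you do not set up.

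Second, you offer no working mechanism for making $\pi'_v$ supercuspidal at $v\in S_p-\sigma$. Theorem \ref{thm_existence_of_ordinary_lift_of_exceptional_representation} produces an \emph{everywhere unramified, $\iota$-ordinary} representation, only after a soluble totally real base change $F'/F$, and only for $p=5$ and exceptional $\rhobar$; Lemma \ref{types} is asserted over the fixed field $F$, for an arbitrary odd $p$ and arbitrary $\pi$ with $\overline{r_\iota(\pi)}|_{G_{F(\zeta_p)}}$ irreducible, and it is itself an input (alongside that theorem) to Lemma \ref{lem_existence_of_ordinary_lifts}, so leaning on \S\ref{ordinary} inverts the intended logic and in any case does not yield supercuspidal components at $p$. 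Finally, the base-change-and-descend step is not harmless bookkeeping: cyclic descent returns some representation over $F$ whose base change is the one you built, and identifying its local components at exactly the modified places is an additional argument your sketch does not supply. What is needed is a single lifting theorem with prescribed local types at all bad places at once, which is precisely what the cited \cite[Lemma 5.25]{ThorneA} provides and what your patchwork of level-raising steps does not.
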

\begin{proof}
The lemma is the same as \cite[Lemma 5.25]{ThorneA}, except that the condition $\overline{r_\iota(\pi)}$ irreducible and $[F(\zeta_p) : F] \geq 4$ has been replaced by the condition $\overline{r_\iota(\pi)}|_{G_{F(\zeta_p)}}$ irreducible. This is justified by Lemma \ref{trivial}.
\end{proof}

\section{Galois theory}\label{sec_galois_theory}

In this paper, we use the notation from Galois deformation theory introduced in \cite[\S 5]{ThorneA}. We first recall the basic definitions and then study the problem of killing dual Selmer for an exceptional residual representation. 

\subsection{Galois deformation theory}\label{sec_deformation_theory}

Let $p$ be an odd prime, let $F$ be a number field, and let $E$ be a coefficient field. We fix a continuous, absolutely irreducible representation $\overline{\rho} : G_F \to \GL_2(k)$ and a continuous character $\mu : G_F \to \cO^\times$ which lifts $\det \overline{\rho}$. We will assume that $k$ contains the eigenvalues of all elements in the image of $\overline{\rho}$. We also fix a finite set $S$ of finite places of $F$, containing the set $S_p$ of places dividing $p$, and the places at which $\overline{\rho}$ and $\mu$ are ramified. For each $v \in S$, we fix a ring $\Lambda_v \in \CNL_\cO$ and we define $\Lambda = \widehat{\otimes}_{v \in S} \Lambda_v$, the completed tensor product being over $\cO$. Then $\Lambda \in \CNL_\cO$.

Let $v \in S$. We write $\cD_v^\square : \CNL_{\Lambda_v} \to \mathrm{Sets}$ for the functor that associates to $R \in \mathrm{CNL}_{\Lambda_v}$ the set of all continuous homomorphisms $r : G_{F_v} \to \GL_2(R)$ such that $r \mod \ffrm_R = \overline{\rho}|_{G_{F_v}}$ and $\det r$ agrees with the composite $G_{F_v} \to \cO^\times \to R^\times$ given by $\mu|_{G_{F_v}}$ and the structural homomorphism $\cO \to R$. It is easy to see that $\cD_v^\square$ is represented by an object $R_v^{\square} \in \CNL_{\Lambda_v}$.
\begin{definition}
Let $v \in S$. A local deformation problem for $\overline{\rho}|_{G_{F_v}}$ is a subfunctor $\cD_v \subset \cD_v^\square$ satisfying the following conditions:
\begin{itemize}
\item $\cD_v$ is represented by a quotient $R_v$ of $R_v^\square$.
\item For all $R \in \CNL_{\Lambda_v}$, $a \in \ker(\GL_2(R) \to \GL_2(k))$ and $r \in \cD_v(R)$, we have $a r a^{-1} \in \cD_v(R)$.
\end{itemize}
\end{definition}
We will write $\rho_v^\square : G_{F_v} \to \GL_2(R_v^\square)$ for the universal lifting. If a quotient $R_v$ of $R_v^\square$ corresponding to a local deformation problem $\cD_v$ has been fixed, we will write $\rho_v : G_{F_v} \to \GL_2(R_v)$ for the universal lifting of type $\cD_v$.
\begin{definition}
A global deformation problem is a tuple
\[ \cS = (\overline{\rho}, \mu, S, \{ \Lambda_v \}_{v \in S}, \{ \cD_v \}_{v \in S}), \]
where:
\begin{itemize}
\item The objects $\overline{\rho} : G_F \to \GL_2(k)$, $\mu : G_F \to k^\times$, $S$ and $\{ \Lambda_v \}_{v \in S}$ are as at the beginning of this section.
\item For each $v \in S$, $\cD_v$ is a local deformation problem for $\overline{\rho}|_{G_{F_v}}$.
\end{itemize}
\end{definition}
\begin{definition}
Let $\cS = (\overline{\rho}, \mu, S, \{ \Lambda_v \}_{v \in S}, \{ \cD_v \}_{v \in S})$ be a global deformation problem. Let $R \in \CNL_{\Lambda}$, and let $\rho : G_F \to \GL_2(R)$ be a continuous lifting of $\overline{\rho}$. We say that $\rho$ is of type $\cS$ if it satisfies the following conditions:
\begin{itemize}
\item $\rho$ is unramified outside $S$.
\item $\det \rho = \mu$. More precisely, the homomorphism $\det \rho : G_F \to R^\times$ agrees with the composite $G_F \to \cO^\times \to R^\times$ induced by $\mu$ and the structural homomorphism $\cO \to R$.
\item For each $v \in S$, the restriction $\rho|_{G_{F_v}}$ lies in $\cD_v(R)$, where we give $R$ the natural $\Lambda_v$-algebra structure arising from the homomorphism $\Lambda_v \to \Lambda$.
\end{itemize}
We say that two liftings $\rho_1, \rho_2 : G_F \to \GL_2(R)$ are strictly equivalent if there exists a matrix $a \in \ker(\GL_2(R) \to \GL_2(k))$ such that $\rho_2 = a \rho_1 a^{-1}$. 
\end{definition}
It is easy to see that strict equivalence preserves the property of being of type $\cS$. We write $\cD^\square_\cS$ for the functor $\CNL_{\Lambda} \to \Sets$ which associates to $R \in \CNL_{\Lambda}$ the set of liftings $\rho : G_F \to \GL_2(R)$ which are of type $\cS$. We write $\cD_\cS$ for the functor $\CNL_{\Lambda} \to \Sets$ which associates to $R \in \CNL_{\Lambda}$ the set of strict equivalence classes of liftings of type $\cS$. 

\begin{definition} If $T \subset S$ and $R \in \mathrm{CNL}_\Lambda$, then we define a $T$-framed lifting of $\overline{\rho}$ to $R$ to be a tuple $(\rho, \{ \alpha_v \}_{v \in T})$, where $\rho : G_F \to \GL_2(R)$ is a lifting and for each $ v\in T$, $\alpha_v$ is an element of $\ker(\GL_2(R) \to \GL_2(k))$. Two $T$-framed liftings $(\rho_1, \{ \alpha_v \}_{v \in T})$ and $(\rho_2, \{ \beta_v \}_{v \in T})$ are said to be strictly equivalent if there is an element $a \in \ker(\GL_2(R) \to \GL_2(k))$ such that $\rho_2 = a \rho_1 a^{-1}$ and $\beta_v = a \alpha_v$ for each $v \in T$.
\end{definition}
We write $\cD^T_\cS$ for the functor $\CNL_{\Lambda} \to \Sets$ which associates to $R \in \CNL_{\Lambda}$ the set of strict equivalence classes of $T$-framed liftings $(\rho, \{ \alpha_v \}_{v \in T})$ to $R$ such that $\rho$ is of type $\cS$. The functors $\cD_\cS$, $\cD_\cS^\square$ and $\cD_\cS^T$ are represented by objects $R_\cS$, $R_\cS^\square$ and $R_\cS^T$, respectively, of $\CNL_{\Lambda}$. The cohomology groups $H^\ast_{\cS, T}(\Ad^0 \rhobar)$ and the dual Selmer group $H^1_{\cS, T}(\Ad^0 \rhobar(1))$ are defined just as in \cite[\S 5.2]{ThorneA}. The cohomological machinery developed in \cite[\S 5.2]{ThorneA} then implies:
\begin{lemma}
Assume $n=2$, and suppose further that $F$ is totally real, $\rhobar$ is totally odd and $\overline{\rho}|_{G_{F(\zeta_p)}}$ is absolutely irreducible,  and $R_v$ is formally smooth over $\Oc$ of dimension $4$ for each $v \in  S - T$ . Then $R_{\cS}^T$ is a quotient of a power series ring over $A_{\cS}^T = \widehat{\otimes}_{v \in T} R_v$ in $h^1_{\cS,T} (\Ad^0 \rhobar(1)) - [F:\Q] - 1 + \#T $ variables.
\end{lemma}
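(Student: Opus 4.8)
The statement is of the form ``presentation of a deformation ring = power series ring modulo relations, with the number of generators equal to a (reduced, relative) tangent space dimension''. I would therefore split the proof into two halves: a formal deformation-theoretic half, which is exactly the cohomological formalism of \cite[\S 5.2]{ThorneA} and requires no new idea, and a cohomological book-keeping half, which is where the three hypotheses (total oddness, irreducibility over $F(\zeta_p)$, formal smoothness of the $R_v$ for $v\in S-T$) are used. For the first half: by the machinery of \cite[\S 5.2]{ThorneA}, the relative reduced tangent space $\ffrm_{R^T_\cS}/(\ffrm^2_{R^T_\cS}+\ffrm_{A^T_\cS})$ is canonically $H^1_{\cS,T}(\Ad^0\rhobar)$ and the obstruction space is $H^2_{\cS,T}(\Ad^0\rhobar)$, so $R^T_\cS$ is a quotient of a power series ring over $A^T_\cS$ in $h^1_{\cS,T}(\Ad^0\rhobar)$ variables (we do not need to bound the relations). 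Everything thus reduces to the numerical identity
\[ h^1_{\cS,T}(\Ad^0\rhobar) = h^1_{\cS,T}(\Ad^0\rhobar(1)) - [F:\Q] - 1 + \#T. \]

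I would prove this by combining three inputs. \emph{(1) A framing computation} relating $h^1_{\cS,T}(\Ad^0\rhobar)$ to the ordinary Selmer group $h^1_\cS(\Ad^0\rhobar)$ in which the conditions $\cL_v$ are imposed at every $v\in S$: using $n=2$, the space of type-$\cS$ liftings to $k[\epsilon]$ has dimension $3 + h^1_\cS(\Ad^0\rhobar)$ (here absolute irreducibility of $\rhobar$ gives $H^0(G_F,\Ad^0\rhobar)=0$, hence $\dim_k B^1(G_F,\Ad^0\rhobar)=3$); adjoining a $\GL_2$-framing at each $v\in T$ adds $4\#T$, quotienting by the single copy of strict equivalence (which acts freely once $T\neq\emptyset$) subtracts $4$, and passing to the tangent space relative to $A^T_\cS=\widehat{\otimes}_{v\in T}R_v$ subtracts $\sum_{v\in T}\bigl(\dim_k\cL_v + \dim_k B^1(G_{F_v},\Ad^0\rhobar)\bigr)$, giving
\[ h^1_{\cS,T}(\Ad^0\rhobar) = h^1_\cS(\Ad^0\rhobar) + \#T - 1 + \sum_{v\in T}\bigl(h^0(F_v,\Ad^0\rhobar) - \dim_k\cL_v\bigr). \]
\emph{(2) The Greenberg--Wiles formula}: since $F$ is totally real and $\rhobar$ is totally odd, and since $H^0(G_F,\Ad^0\rhobar)=0$ and $H^0(G_F,\Ad^0\rhobar(1))=0$ --- the latter because a $G_F$-eigenvector in $\Ad^0\rhobar$ for $\epsilon^{-1}$ is $G_{F(\zeta_p)}$-fixed, hence $0$ by irreducibility of $\rhobar|_{G_{F(\zeta_p)}}$ --- one gets
\[ h^1_\cS(\Ad^0\rhobar) - h^1_{\cS,T}(\Ad^0\rhobar(1)) = \sum_{v\in S}\bigl(\dim_k\cL_v - h^0(F_v,\Ad^0\rhobar)\bigr) - [F:\Q], \]
where the $-[F:\Q]$ is exactly the archimedean contribution $\sum_{v\mid\infty}(\dim_k\cL_v - h^0(F_v,\Ad^0\rhobar)) = \sum_{v\mid\infty}(0-1)$ (using $H^1(F_v,\Ad^0\rhobar)=0$ for $p$ odd and $h^0(F_v,\Ad^0\rhobar)=1$ by total oddness). \emph{(3)} For $v\in S-T$, formal smoothness of $R_v$ over $\cO$ of dimension $4$ (i.e.\ relative dimension $3$) forces $\dim_k\cL_v = h^0(F_v,\Ad^0\rhobar)$, since that relative tangent space has dimension $\dim_k B^1(G_{F_v},\Ad^0\rhobar) + \dim_k\cL_v = 3 - h^0(F_v,\Ad^0\rhobar) + \dim_k\cL_v$; hence the $v\in S-T$ summands in (2) vanish. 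Substituting (2) and (3) into (1), the $T$-summands cancel and the $(S-T)$-summands disappear, leaving precisely the claimed identity.

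\textbf{Main obstacle.} The genuine work is input (1): correctly organizing the framing book-keeping --- the $\GL_2$-framings at $T$, the single freely-acting copy of strict equivalence (this is where one needs either $T\neq\emptyset$ or absolute irreducibility of $\rhobar$), and the tangent space of the base $A^T_\cS$ --- so that it collapses exactly to the term $+\#T-1$; and, in parallel, pinning down where the $-[F:\Q]$ sits inside the Greenberg--Wiles formula. Neither goes beyond the formalism of \cite[\S 5.2]{ThorneA}; the only new inputs are the vanishing of $H^0(G_F,\Ad^0\rhobar)$ and of $H^0(G_F,\Ad^0\rhobar(1))$, which follow from irreducibility of $\rhobar|_{G_{F(\zeta_p)}}$, together with the value $h^0(F_v,\Ad^0\rhobar)=1$ at the real places, which follows from total oddness.
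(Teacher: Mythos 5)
Your strategy is exactly the paper's: the proof given there is a one\-line citation of \cite[Corollary 5.7]{ThorneA}, whose content is precisely the reduction to the formalism of \cite[\S 5.2]{ThorneA} followed by the Greenberg--Wiles bookkeeping you carry out, with the three hypotheses entering where you say they do (oddness for $-[F:\bbQ]$, irreducibility over $F(\zeta_p)$ for $H^0(G_F,\Ad^0\rhobar(1))=0$, formal smoothness for the vanishing of the $S-T$ terms). Your final identity is correct.

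One caveat on the bookkeeping: as literally stated, your (1) and (2) are each only inequalities, and you should make explicit why the errors cancel. In (1), passing to the tangent space relative to $A^T_\cS$ subtracts the dimension of the \emph{image} of the cotangent space of $A^T_\cS$; this equals $\sum_{v\in T}\bigl(\dim_k\cL_v+\dim_k B^1(G_{F_v},\Ad^0\rhobar)\bigr)$ only if the restriction map $H^1_\cS(\Ad^0\rhobar)\to\bigoplus_{v\in T}\cL_v$ is surjective, and in general the relative tangent space exceeds your count by the cokernel of that map. In (2), the dual Selmer group occurring in the genuine Greenberg--Wiles formula for the system with conditions $\cL_v$ at every $v\in S$ is the one with conditions $\cL_v^\perp$ at every $v\in S$ \emph{including} $T$, whereas $H^1_{\cS,T}(\Ad^0\rhobar(1))$ as defined in \cite[\S 5.2]{ThorneA} imposes no condition at $v\in T$; the two differ by the dimension of the image of the latter group in $\bigoplus_{v\in T}H^1(F_v,\Ad^0\rhobar(1))/\cL_v^\perp$. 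The Wiles exact sequence comparing the Selmer systems that are trivial, respectively equal to $\cL_v$, at $v\in T$ shows these two discrepancies are the same number, so they cancel upon substituting (2) into (1) and your formula survives; but this cancellation is a genuine step, not a triviality. The cleaner route (and the one effectively taken by the mapping\-cone definition of $H^1_{\cS,T}$ in \cite[\S 5.2]{ThorneA}) is to compute the relative tangent space directly as the fiber over the trivial point of $A^T_\cS$, i.e.\ using the Selmer group with the \emph{trivial} condition at $v\in T$; then Greenberg--Wiles for that system lands directly on $h^1_{\cS,T}(\Ad^0\rhobar(1))$ with no correction at $T$.
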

\begin{proof}
The proof is the same as \cite[Corollary 5.7]{ThorneA}, except that the assumption that $\overline{\rho}|_{G_{F(\zeta_p)}}$ is absolutely irreducible replaces the condition $[F(\zeta_p) : F] > 2$.
\end{proof}
\subsection{Another local deformation problem}\label{sec_another_local_deformation_problem}

To the glossary of deformation problems considered in \cite[\S 5.3]{ThorneA}, we add the following deformation problem that is important to the work of this paper. Let $v \in S - S_p$, and suppose that $q_v \equiv 1 \text{ mod }p$, $\overline{\rho}|_{G_{F_v}}$ is unramified, and $\rhobar(\Frob_v)$ is unipotent of order $p$. We define a subfunctor  $\cD_v^{\rm St(uni)} \subset \cD_v^\square$ by declaring that for $R \in \CNL_\Oc$, $\cD_v^{\rm St(uni)}(R)$ is the set of lifts of $\rhobar|_{G_{F_v}}$ to $\GL_2(R)$ of fixed determinant $\mu$  which are of the form $$\left(\begin{array}{cc}
  \chi  & *  \\
  0 & \chi\epsilon^{-1}
\end{array} \right),$$ with $\chi$ an unramified character.

\begin{prop}\label{steinberg}
The functor $\cD_v^{\rm St(uni)}$ is a local deformation problem. The representing object $R_v^{\rm St(uni)}$ is formally smooth over $\Oc$ of dimension 4. Further $\cL_v$ and $\cL_v^\perp$  are 1-dimensional, and transverse to the 1-dimensional unramified subspaces of $H^1(F_v,\Ad^0 \rhobar)$ and $H^1(F_v,\Ad^0\rhobar (1))$ respectively. If $\overline{\rho}$ is exceptional, then the image of the Lie subspace of $H^1(F_S/F,\Ad^0 \rhobar(1))$ in $H^1(F_v, \Ad^0 \rhobar(1))$ is not in $\mathcal L_v^\perp$.
\end{prop}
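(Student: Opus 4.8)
The plan is to establish the four assertions in the order stated, since the last one invokes the transversality of $\cL_v^\perp$ supplied by the third. The first three are local. For representability of $\cD_v^{\rm St(uni)}$ I would argue as for the Steinberg--type problems of \cite[\S 5.3]{ThorneA}: since $\rhobar|_{G_{F_v}}$ is unramified with $\rhobar(\Frob_v)$ a regular unipotent, it carries a \emph{unique} stable line, and $\cD_v^{\rm St(uni)}$ can be phrased intrinsically as the condition that a lift $r$ admit a $G_{F_v}$-stable free rank-one direct summand reducing to that line on which $G_{F_v}$ acts through an unramified character --- a condition stable under $\ker(\GL_2(R)\to\GL_2(k))$-conjugation and cutting out a quotient of $R_v^\square$. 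For smoothness I would run the usual tangent--obstruction computation: the tangent space is the space of cocycles $G_{F_v}\to\Ad^0\rhobar$ that are upper-triangular with unramified diagonal part; after framing this is $3$-dimensional over $k$, so $\dim R_v^{\rm St(uni)}=4$, and one checks lifting along a small surjection is unobstructed. Smoothness then forces $\dim_k\cL_v=1$ by the standard framing count, and $\dim_k\cL_v^\perp = h^1(F_v,\Ad^0\rhobar(1))-1 = 1$ by Tate local duality (here $h^0=h^2=1$ and $h^1=2$ for $\Ad^0\rhobar(1)$, which is unramified at $v$ with $\Frob_v$ acting as a regular unipotent because $q_v\equiv1\bmod p$). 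Transversality of $\cL_v$ and $\cL_v^\perp$ to the one-dimensional unramified subspaces then falls out of these descriptions: a nonzero class in $\cL_v$ has a nonzero, hence ramified, upper-right entry on $I_{F_v}$, while an unramified class restricts trivially to $I_{F_v}$, and dually for $\cL_v^\perp$.

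Turning to the last assertion --- the point of the proposition --- put $\Gamma = \rhobar(G_F)\subset\GL_2(k)$. Since $\rhobar$ is exceptional, $\epsilon\bmod p$ is inflated from the quadratic character of $\PGL_2(\bbF_5)$, so the $G_F$-action on $\Ad^0\rhobar(1)$ factors through the projective image $\Gamma\twoheadrightarrow\PGL_2(\bbF_5)$, and I regard $\Ad^0\rhobar(1)$ as a $\PGL_2(\bbF_5)$-module accordingly. By definition the Lie subspace of $H^1(F_S/F,\Ad^0\rhobar(1))$ is the image of the inflation map from $H^1(\Gamma,\Ad^0\rhobar(1))$; this inflation is injective, lands in $H^1(F_S/F,-)$ because the classes are unramified outside $S$, and --- since the kernels of $G_F\to\Gamma\to\PGL_2(\bbF_5)$ act trivially on the coefficients and $\ker(\Gamma\to\PGL_2(\bbF_5))$ has order prime to $p=5$ --- identifies the Lie subspace with $H^1(\PGL_2(\bbF_5),\Ad^0\rhobar(1))$. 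This group is nonzero: equivalently $H^1(\PGL_2(\bbF_5),\Ad(1))\neq 0$ (their difference is $H^1(\PGL_2(\bbF_5),\chi)$ for the quadratic character $\chi$, which vanishes since $\PGL_2(\bbF_5)\cong S_5$ has perfect index-two subgroup $A_5$), and this non-vanishing is the group-theoretic fact driving the whole paper. Fix a nonzero Lie class $\phi$ and let $0\neq\overline\phi\in H^1(\Gamma,\Ad^0\rhobar(1))$ be the class it inflates from.

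Next I would restrict to $v$. As $\phi$ is inflated from $\Gamma$ and $\rhobar$ is unramified at $v$, the class $\phi$ is unramified at $v$, so $\phi|_{G_{F_v}}$ lies in the one-dimensional unramified subspace of $H^1(F_v,\Ad^0\rhobar(1))$; concretely it is the inflation to $G_{F_v}$ of $\overline\phi|_{\langle\rhobar(\Frob_v)\rangle}$. Now the hypotheses enter: $\rhobar(\Frob_v)$ has order $p$, and $p$ exactly divides $\#\Gamma$ (it exactly divides $\#\GL_2(\bbF_5)$, which $\#\Gamma$ divides), so $\langle\rhobar(\Frob_v)\rangle$ is a Sylow $p$-subgroup of $\Gamma$; since $\Ad^0\rhobar(1)$ is annihilated by $p$, restriction to a Sylow $p$-subgroup is injective on $H^1$ (corestriction gives a left inverse up to the prime-to-$p$ index). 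Hence $\overline\phi|_{\langle\rhobar(\Frob_v)\rangle}\neq0$, so $\phi|_{G_{F_v}}\neq0$. Combining with the third assertion: $\phi|_{G_{F_v}}$ is a nonzero vector of the one-dimensional unramified subspace, which is transverse to the one-dimensional $\cL_v^\perp$ inside the two-dimensional $H^1(F_v,\Ad^0\rhobar(1))$, so the two meet only in $0$. Therefore $\phi|_{G_{F_v}}\notin\cL_v^\perp$, and \emph{a fortiori} the image of the Lie subspace in $H^1(F_v,\Ad^0\rhobar(1))$ is not contained in $\cL_v^\perp$.

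I expect the main obstacle to lie entirely in the first three assertions --- proving $R_v^{\rm St(uni)}$ formally smooth of dimension $4$ and computing $\cL_v$, $\cL_v^\perp$, and their transversality to the unramified subspaces --- all in the regime $q_v\equiv1\bmod p$, where inertia genuinely contributes to the deformation functor and the analysis is more involved than the analogous one in \cite{ThorneA}. Once those are secured, the last assertion reduces to the Sylow-subgroup restriction argument above, whose only substantial external input is the non-vanishing of $H^1(\PGL_2(\bbF_5),\Ad(1))$; the hypothesis that $\rhobar(\Frob_v)$ be unipotent of order $p$ is used precisely to make $\langle\rhobar(\Frob_v)\rangle$ a Sylow $p$-subgroup of the image --- the feature that fails at a Taylor--Wiles prime, where $\rhobar(\Frob_v)$ has order prime to $p$ and every Lie class dies on the decomposition group.
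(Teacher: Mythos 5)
Your proposal is correct and follows essentially the same route as the paper: representability and smoothness rest on the rigidity of the triangular form when $\rhobar(\Frob_v)$ is regular unipotent (your ``unique stable line'' is the paper's observation that any $B\equiv 1$ conjugating one such triangular form to another must itself be upper triangular), the statements about $\cL_v$ and $\cL_v^\perp$ follow from the tangent-space count and local duality exactly as you indicate, and the final assertion is the restriction-to-a-Sylow-$5$-subgroup argument that the paper sketches in its introduction (the Lie class survives restriction to $\langle\rhobar(\Frob_v)\rangle$ precisely because that subgroup has order $5$, and then lands in the unramified line, which is transverse to $\cL_v^\perp$). Your write-up is in fact more detailed than the paper's own proof, which compresses all but the rigidity statement into ``follows easily.''
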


\begin{proof}
The key point as follows. Consider  matrices in $A,B \in \GL_2(R)$ such that $A=\left( \begin{array}{cc}
  \alpha & *  \\
  0 & \beta
\end{array} \right)$ reduces to a unipotent matrix of order $p$ and $B$ reduces to the identity. If   
\[ B{\left( \begin{array}{cc}
  \alpha & *  \\
  0 & \beta
\end{array} \right)}B^{-1}=\left( \begin{array}{cc}
  \alpha & *'  \\
  0 & \beta
\end{array} \right), \] then $B$ is upper triangular (cf.  E3 of \cite[\S 1]{Tay}). The claim about $\cL_v$ and $\cL_v^\perp$ follows easily.
\end{proof}

\subsection{Auxiliary primes}\label{novel}

Let us now suppose that $F$ is totally real and $\overline{\rho}$ totally odd, and let $F_0 \subset \overline{F}$ denote the the fixed field of $\ad^0 \overline{\rho}$. Consider the inflation-restriction exact sequence 
\[ 0 \rightarrow H^1({\rm Gal}(F_0/F),\Ad^0 \rhobar (1)^{G_{F_0}}) \rightarrow H^1(G_F,\Ad^0 \rhobar (1)) \rightarrow H^1(G_{F_0},\Ad^0 \rhobar (1))^{{\rm Gal}(F_0/F)}.\]
  \begin{lemma}
  The $k$-vector space $ H^1({\rm Gal}(F_0/F),\Ad^0(1)^{G_{F_0}})$  is non-zero precisely when $p=5$,  the projective image
 of $\rho$ is $\PGL_2(\F_5)$ and $\zeta_5 \in F$. (This implies that $\sqrt{5} \in F$, equivalently  $[F(\zeta_5):F]=2$.) In this  exceptional case it is one dimensional.
 \end{lemma}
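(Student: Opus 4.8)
The plan is to analyze the group cohomology $H^1(\mathrm{Gal}(F_0/F), \Ad^0\overline\rho(1)^{G_{F_0}})$ directly in terms of the projective image $G = \ad^0\overline\rho(G_F) \subset \PGL_2(\overline{\F}_p)$. First I would observe that $\Ad^0\overline\rho$ is a representation of $G$ (it factors through the projective quotient), so this cohomology group only depends on $G$ together with the action of $G$ on the Tate twist, i.e. on the cyclotomic character $\epsilon$ restricted to $G_F$. Writing $\chi$ for the character by which $G_F$ acts on $\mu_p$, which we may view as a character of $\mathrm{Gal}(F_0(\zeta_p)/F)$, the module $\Ad^0\overline\rho(1)$ is $\Ad^0\overline\rho \otimes \chi$, and the $G_{F_0}$-invariants are nonzero only when $\chi$ is trivial on $G_{F_0}$, i.e. when $\zeta_p \in F_0$; in that case the invariants are all of $\Ad^0\overline\rho \otimes \chi$ with $\chi$ now a (possibly trivial) character of $\mathrm{Gal}(F_0/F) \cong G$.

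Next I would reduce to a finite list of cases using the classification of finite subgroups of $\PGL_2(\overline{\F}_p)$ (cyclic, dihedral, $A_4$, $S_4$, $A_5$, and the groups $\PSL_2(\F_{p^r})$, $\PGL_2(\F_{p^r})$). For each such $G$ one must compute $H^1(G, \Ad^0 \otimes \chi)$ where $\chi: G \to \{\pm 1\}$ ranges over the at-most-two characters that can arise from $\epsilon$ (trivial, or the sign character when $G$ has a subgroup of index $2$). The key input is the known vanishing results: by adequacy theory (see \cite[\S 2]{ThorneB}) one has $H^1(G, \Ad^0) = 0$ for all these $G$ except $G = \PSL_2(\F_5)$ and $G = \PGL_2(\F_5)$ when $p = 5$ — here one uses that $\Ad^0$ as an $\F_5[\PSL_2(\F_5)]$-module is the restriction of an $\F_5$-irreducible $3$-dimensional representation and $H^1(\PSL_2(\F_5), \Ad^0) = H^1(A_5, \Ad^0)$ is one-dimensional. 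Then I would rule out the twisted cases: for $G = \PSL_2(\F_5)$ (which is simple) there is no nontrivial character $\chi$, so the untwisted computation applies, but then $\zeta_5 \notin F_0$ would be needed to be consistent with $F$ totally real... actually here the relevant point is that $\zeta_5 \in F$ forces $[F(\zeta_5):F] = 1$ and one checks whether this is compatible with $G$; and for $G = \PGL_2(\F_5)$ the nontrivial character $\chi$ is the sign character of $\PGL_2(\F_5)/\PSL_2(\F_5)$, and one must compute $H^1(\PGL_2(\F_5), \Ad^0 \otimes \mathrm{sgn}) = H^1(\PGL_2(\F_5), \Ad(1))$, which by the abstract's discussion is nonzero and one-dimensional. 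One also checks, via Lemma \ref{bc} and the total oddness, that the exceptional hypothesis is exactly the statement that $G \cong \PGL_2(\F_5)$ with $\zeta_5$ lying in $F$ (not merely in $F_0$), and that $\zeta_5 \in F \Leftrightarrow \sqrt 5 \in F \Leftrightarrow [F(\zeta_5):F]=2$.

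The main obstacle I expect is the case analysis for $\chi = \mathrm{sgn}$ on the groups $\PGL_2(\F_{p^r})$ and on $S_4$: one has to confirm that twisting by the order-$2$ character does not accidentally produce new nonzero $H^1$ for primes other than $5$, and conversely that for $p = 5$, $G = \PGL_2(\F_5)$ the twisted group $H^1(\PGL_2(\F_5), \Ad(1))$ really is one-dimensional and not larger. This is where one invokes the explicit computation referenced in the abstract (the non-vanishing of $H^1(\PGL_2(\F_5), \Ad(1))$), combined with the restriction-corestriction argument to $\PSL_2(\F_5)$: since $[\PGL_2(\F_5):\PSL_2(\F_5)] = 2$ is prime to $p = 5$, restriction $H^1(\PGL_2(\F_5), \Ad(1)) \hookrightarrow H^1(\PSL_2(\F_5), \Ad^0)^{\mathrm{Gal}}$ is injective, and the right-hand side is at most one-dimensional, pinning the dimension down to exactly $1$. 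Finally I would assemble these computations: in the exceptional case the inflation term is the one-dimensional space $H^1(\mathrm{Gal}(F_0/F), \Ad^0\overline\rho(1)^{G_{F_0}}) = H^1(\PGL_2(\F_5), \Ad(1))$, and in all other cases it vanishes, either because $\Ad^0\overline\rho(1)^{G_{F_0}} = 0$ (when $\zeta_p \notin F_0$) or because the relevant $H^1$ of the projective image vanishes.
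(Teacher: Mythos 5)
Your strategy coincides with the paper's: the printed proof is a one--line citation of \cite[Lemma 2.48]{DDT} ``and its proof'', and that proof is exactly the reduction you describe --- identify $H^1(\Gal(F_0/F),\Ad^0\overline{\rho}(1)^{G_{F_0}})$ with $H^1(G,\Ad^0\otimes\chi)$, where $G$ is the projective image and $\chi$ is the mod~$p$ cyclotomic character viewed on $\Gal(F_0/F)$ (the invariants being nonzero only when $\zeta_p\in F_0$), run through Dickson's classification of subgroups of $\PGL_2(\overline{\F}_p)$, and isolate $H^1(\PGL_2(\F_5),\Ad^0\otimes\mathrm{sgn})$ as the unique surviving case. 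Two points in your writeup need repair.

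First, the chain ``$\zeta_5\in F\Leftrightarrow\sqrt{5}\in F\Leftrightarrow[F(\zeta_5):F]=2$'' is false: $\zeta_5\in F$ would give $[F(\zeta_5):F]=1$, and is impossible anyway since $F$ is totally real (as assumed at the start of \S\ref{novel}). The condition in the statement must be read as $\zeta_5\in F_0$, the fixed field of $\ker\ad^0\overline{\rho}$ --- this is precisely the exceptional hypothesis of \S\ref{examples}, and your parenthetical ``not merely in $F_0$'' inverts it. The correct deduction is: $\zeta_5\in F_0$ together with $\Gal(F_0/F)\cong\PGL_2(\F_5)\cong S_5$ forces $F(\zeta_5)$ to be the fixed field of the unique proper nontrivial normal subgroup $A_5=\PSL_2(\F_5)$ (it cannot equal $F$ since $F$ is totally real), whence $[F(\zeta_5):F]=2$, $\sqrt{5}\in F$, and $\chi$ is the sign character of $S_5$.

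Second, your restriction argument to the index--$2$ subgroup only yields the upper bound $\dim_k H^1(\PGL_2(\F_5),\Ad^0(1))\leq\dim_k H^1(\PSL_2(\F_5),\Ad^0)=1$. The non-vanishing --- which is the substance of the lemma --- additionally requires showing that the outer $\bbZ/2\bbZ$ acts on the one-dimensional space $H^1(\PSL_2(\F_5),\Ad^0)$ by $-1$ rather than $+1$, i.e.\ that the class survives into the sign-twisted rather than the untwisted $H^1$ of $\PGL_2(\F_5)$. You ``invoke the explicit computation referenced in the abstract'', but that computation is essentially the statement being proved; a complete argument must carry it out (as is done in the proof of \cite[Lemma 2.48]{DDT}).
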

 \begin{proof}
 This follows from \cite[Lemma 2.48]{DDT} and its proof. 
  \end{proof}
We now specialize to the case when the conclusion of the lemma is satisfied. Thus we assume $p = 5$, $\Gal(F_0 / F) \cong \PGL_2(\bbF_5)$, and $\zeta_5 \in F_0$. For $m \geq 1$, define $F_m=F_0(\zeta_{5^m})$. (Thus $F_0=F_1$.)  Suppose given as well a lift $\rho : G_F \to \GL_2(\cO)$ of $\overline{\rho}$, and set for each $N \geq 1$ $\rho_N = \rho \text{ mod }\lambda^N$. 

\begin{lemma}\label{special1}
For all  $N \gg 0$, there is a set  of places $v $ of $F$ of positive density such that  $\rho_N(\Frob_v)=\left( \begin{array}{cc}
  \alpha & *  \\
  0 & \beta
\end{array} \right)$   reduces to a unipotent matrix of order $p=5$  and such that $\alpha\beta^{-1}=q_v$, with $q_v \equiv 1 \text{ mod }5$, $q_v \not\equiv 1 \text{ mod }5^2$. 
\end{lemma}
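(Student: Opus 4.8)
The plan is to produce the places $v$ by the Chebotarev density theorem, applied to a carefully chosen conjugacy class in the Galois group generated by $\rho_N$ and the $5^N$-th roots of unity; the crucial input is Lemma~\ref{Z5}.

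First I would reduce the assertion. Write $\det\rho=\epsilon^{-1}\psi$ with $\psi$ of finite order (the case of interest, coming from the Hodge--Tate weights $\{0,1\}$). The exceptionality of $\overline\rho$ means that the quadratic character of $\Gal(F_0/F)=\PGL_2(\F_5)$ with kernel $\PSL_2(\F_5)$ is the character cutting out $F(\zeta_5)/F$; since on $\GL_2(\F_5)$ this character is $g\mapsto\left(\tfrac{\det g}{5}\right)$, any unramified place $v$ at which $\overline\rho(\Frob_v)$ is unipotent automatically has $\det\overline\rho(\Frob_v)=1$, hence $\Frob_v$ acts trivially on $F(\zeta_5)$, i.e.\ $q_v\equiv1\bmod5$. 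Suppose in addition $q_v\equiv6\bmod5^N$ for some $N\ge2$ (which then also gives $q_v\not\equiv1\bmod25$), and $\rho_N(\Frob_v)$ is conjugate to $\left(\begin{smallmatrix}6&1\\0&1\end{smallmatrix}\right)\bmod\lambda^N$. Then $\det\rho_N(\Frob_v)=6=q_v\bmod\lambda^N$, so $\psi(\Frob_v)=\epsilon_N(\Frob_v)\det\rho_N(\Frob_v)=1$, the characteristic polynomial of $\rho_N(\Frob_v)$ is $(X-q_v)(X-1)\bmod\lambda^N$, and $\overline\rho(\Frob_v)$ is unipotent of order $5$; by Lemma~\ref{eigenvalues}, $\rho_N(\Frob_v)$ is then conjugate to $\left(\begin{smallmatrix}q_v&\ast\\0&1\end{smallmatrix}\right)$ with $\ast\not\equiv0\bmod\lambda$, i.e.\ with $\alpha\beta^{-1}=q_v$, exactly as the lemma demands. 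So it suffices to exhibit, for all large $N$, a positive-density set of unramified $v$ with $\rho_N(\Frob_v)\sim\left(\begin{smallmatrix}6&1\\0&1\end{smallmatrix}\right)\bmod\lambda^N$ and $q_v\equiv6\bmod5^N$.

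Next I would invoke Lemma~\ref{Z5}: after conjugation the projective image of $\rho$ is $\PGL_2(A)$ for a closed $\Z_5$-subalgebra $A\subseteq\Oc$, so it contains $\PGL_2(\Z_5)$, has commutator subgroup $\PSL_2(A)$, and has abelianization $\Z/2$. From the first two facts, $\rho_N(G_F)$ contains a conjugate of $\left(\begin{smallmatrix}6&1\\0&1\end{smallmatrix}\right)$, which reduces mod $\lambda$ to the unipotent $\left(\begin{smallmatrix}1&1\\0&1\end{smallmatrix}\right)$ because $6\equiv1\bmod5$; from the third, the only abelian information that $\rho_N$ imposes on the cyclotomic tower, beyond what $\det\rho=\epsilon^{-1}\psi$ already forces, comes through $F(\zeta_5)$. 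Hence, writing $\widetilde F_N$ for the field cut out by $\rho_N$, the group $\Gal\bigl(\widetilde F_N\cdot F(\zeta_{5^N})/F\bigr)$ is a fibre product of $\rho_N(G_F)$ and $\Gal(F(\zeta_{5^N})/F)$ glued along $\Gal(F(\zeta_5)/F)\cong\Z/2$, the two maps to $\Z/2$ being the sign character on $\PGL_2(\F_5)$ and the reduction mod $5$ of the cyclotomic character — and these two copies of $\Z/2$ are identified as \emph{the same} one exactly by the exceptionality of $\overline\rho$. In this fibre product one can choose $\sigma$ mapping to (the conjugate of) $\left(\begin{smallmatrix}6&1\\0&1\end{smallmatrix}\right)$ on one side and to $6^{-1}\bmod5^N$ on the other, because $6\equiv1\bmod5$ sends both sides to the trivial element of $\Z/2$; for the second coordinate to be available one needs $6^{-1}\bmod25$ to occur in $\Gal(F(\zeta_{25})/F)$, which holds as soon as $F(\zeta_{25})\neq F(\zeta_5)$. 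Chebotarev applied to the conjugacy class of $\sigma$ then yields a positive-density set of places $v$ as required, and only $N\ge2$ is used, which is why the statement reads $N\gg0$.

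The step I expect to be the real obstacle is the construction of $\sigma$ — equivalently, showing that the joint image of $(\rho_N,\epsilon_N)$ is large enough to contain $\bigl(\left(\begin{smallmatrix}6&1\\0&1\end{smallmatrix}\right),6^{-1}\bigr)$. This draws on Lemma~\ref{Z5} twice: once to realize $\left(\begin{smallmatrix}6&1\\0&1\end{smallmatrix}\right)$ inside the image of $\rho_N$ at all (including pinning down the scalar in the lift from $\PGL_2$ to $\GL_2$, where one uses $-I\in\rho_N(G_F)$ together with the determinant), and once to control precisely which cyclotomic data $\rho_N$ carries; and it draws on exceptionality to align that data with $\Gal(F(\zeta_5)/F)$. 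Absent either ingredient, the fibre product above could be too small and no such $\sigma$ would exist. These are the same primes used in \cite{Tay} to kill Lie-type classes in Selmer groups.
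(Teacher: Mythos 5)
The overall strategy you describe --- invoke Lemma \ref{Z5} to show the projective image is large, combine with the exceptionality hypothesis to locate the $\Z/2$ of overlap between the image and the cyclotomic tower, and finish with Chebotarev --- is the same strategy as the paper's. However, your execution has a genuine gap that stems from working with the field $\widetilde F_N$ cut out by $\rho_N$ rather than with the field cut out by the \emph{projective} representation $\Ad\rho$, as the paper does.

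The problem is your fibre-product claim: you assert that $\Gal(\widetilde F_N\cdot F(\zeta_{5^N})/F)$ is a fibre product of $\rho_N(G_F)$ and $\Gal(F(\zeta_{5^N})/F)$ over $\Gal(F(\zeta_5)/F)\cong\Z/2$. This is false, for a reason you yourself flag one sentence earlier: $\det\rho_N=\epsilon_N^{-1}\psi_N$, so $\widetilde F_N$ already contains, via the determinant, a cyclotomic subfield $F(\zeta_{5^m})$ with $m$ growing with $N$. Hence $\widetilde F_N\cap F(\zeta_{5^N})$ is strictly larger than $F(\zeta_5)$, and the true fibre product is over this larger abelian quotient, not over $\Z/2$. (Your sentence ``the only abelian information that $\rho_N$ imposes on the cyclotomic tower, beyond what $\det\rho=\epsilon^{-1}\psi$ already forces, comes through $F(\zeta_5)$'' is correct, but the very next sentence forgets the clause ``beyond what the determinant forces'' and states the wrong fibre product.) Relatedly, your target $\rho_N(\Frob_v)\sim\left(\begin{smallmatrix}6&1\\0&1\end{smallmatrix}\right)$ over-constrains: it pins down not just the eigenvalue ratio $\alpha\beta^{-1}$ (which is all the lemma asks for) but also the determinant $\det\rho_N(\Frob_v)=6$, forcing $\psi(\Frob_v)\equiv 1$ --- an extra condition the lemma never requires, and whose compatibility with the other constraints you do not verify. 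You also invoke ``$-I\in\rho_N(G_F)$'' to pin down the scalar when lifting from $\PGL_2$ to $\GL_2$; that containment is not established anywhere and need not hold in general.

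The paper sidesteps all of this by working projectively from the start. The quantity $\alpha\beta^{-1}$ is a function of $\Ad\rho_N(\Frob_v)$ alone, so the condition $\alpha\beta^{-1}=q_v$ is exactly a compatibility between the projective image and the cyclotomic character, with no role for $\det\rho$ or $\psi$. The paper therefore sets $K_1$ equal to the field cut out by $\Ad\rho$ and $K_2$ equal to the field cut out by $\epsilon$; Lemma \ref{Z5} (specifically, that the projective image contains $\PGL_2(\Z_5)$ and has abelianization $\Z/2$) gives $K_1\cap K_2=F(\zeta_5)$, and then $H_1:=\ker(H\to\Gal(F(\zeta_5)/F))$ contains a conjugate of $\PSL_2(\Z_5)$. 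Chebotarev applied inside $\Gal(K_1K_2/F)$ then produces the desired places, with exceptionality guaranteeing $q_v\equiv 1\bmod 5$ whenever $\Ad\overline\rho(\Frob_v)\in\PSL_2(\F_5)$. Your proof can be repaired by making the same move: replace $\widetilde F_N$ by the fixed field of $\ker\Ad\rho$, discard the determinant from the target (ask only that $\Ad\rho_N(\Frob_v)$ be a conjugate of $\left(\begin{smallmatrix}a&1\\0&1\end{smallmatrix}\right)$ with $a\equiv 1\bmod\lambda$, $a\not\equiv 1\bmod\lambda^{e+1}$, and $a\equiv q_v$), and then the fibre product really is over $\Z/2$ and the rest of your argument goes through.
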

\begin{proof}
It follows from Lemma \ref{Z5} that the field extensions $K_1,K_2$ of $F$  cut out by 
 the projective image of $\rho$ and the $p$-adic cyclotomic  character are almost linearly disjoint
 over $F$, i.e. $K_1 \cap K_2$ is a finite abelian  extension of $F$, and in fact $K_1 \cap K_2=F(\zeta_5)$.

Let $H$ denote the projective image of $\rho$, and let $H_1$ be  the subgroup of $H$ fixing $K_1 \cap K_2$. By Lemma \ref{Z5}, $H$ contains a conjugate of $\PGL_2(\Z_5)$, hence $H_1$ contains a  conjugate of  $\PSL_2(\Z_5)$. The result  now follows  from  the Cebotarev density theorem. (Note that if $\overline{\rho}(\Frob_v) \in \PSL_2(\bbF_5)$ then $q_v \equiv   1 \text{ mod }5$, because $\overline{\rho}$ is exceptional.)
\end{proof}

\begin{prop}\label{auxiliary}
  Let $\psi$ be a non-zero element in the 1-dimensional $k$-vector space $H^1(\Gal(F_0/F),\Ad^0\rhobar(1)^{G_{F_0}})$. For each integer $N \geq 1$ there is a set of places $v$ of $F$ of positive density such that:
  \begin{enumerate}
   \item We have $q_v \equiv 1 \text{ mod }5$ but $q_v \not\equiv 1\text{ mod }5^2$.
  \item $\rhobar$ is unramified at $v$.
  \item $\rhobar({\rm Frob}_v)$ is  unipotent of order $5$, and $\psi|_{G_{F_v}}$ is unramified at $v$ and non-zero.
  \item The restriction of $\rho_{N}$ to $G_{F_v}$  is of the form
  \[ \left(\begin{array}{cc}
  \chi  & *  \\
  0 & \chi\epsilon^{-1}
\end{array} \right).\]
  \end{enumerate}
\end{prop}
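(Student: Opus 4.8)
The plan is to combine Lemma \ref{special1} with a Cebotarev argument that simultaneously controls the behavior of the cohomology class $\psi$. First I would note that $\psi$, being inflated from $\Gal(F_0/F)$, is unramified everywhere and factors through $\Gal(F_0/F) \cong \PGL_2(\bbF_5)$; its restriction to a decomposition group $G_{F_v}$ at an unramified place $v$ depends only on the conjugacy class of $\Frob_v$ in $\Gal(F_0/F)$, via the restriction map $H^1(\Gal(F_0/F), \Ad^0\rhobar(1)^{G_{F_0}}) \to H^1(\langle \overline\rho(\Frob_v)\rangle, \Ad^0\rhobar(1))$. The key observation (this is exactly the point used in \cite{Tay}) is that since every element of $H^1(\PSL_2(\bbF_5), \ad^0)$ dies on restriction to a cyclic subgroup of order prime to $5$, the class $\psi$ can only remain non-zero on restriction to $\langle \overline\rho(\Frob_v)\rangle$ when $\overline\rho(\Frob_v)$ has order divisible by $5$, i.e. is unipotent of order $5$ (as $\rhobar$ is exceptional, forcing $\Frob_v \in \PSL_2(\bbF_5)$ whenever $q_v \equiv 1 \bmod 5$). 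So I must first check that for the (unique, up to conjugacy) unipotent element $u$ of order $5$ in $\PGL_2(\bbF_5)$, the restriction $H^1(\PGL_2(\bbF_5), \Ad^0\rhobar(1)^{G_{F_0}}) \to H^1(\langle u \rangle, \Ad^0\rhobar(1))$ is injective — equivalently non-zero on the one-dimensional source. This is a finite group cohomology computation: $\Ad^0$ restricted to $\langle u\rangle \cong \bbZ/5$ has a one-dimensional coinvariant/invariant space, and one checks the relevant $H^1(\bbZ/5, -)$ is non-zero and that the inflation class survives; this follows from \cite[Lemma 2.48]{DDT} and its proof, or can be done by hand.

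Next I would set up the field-theoretic framework exactly as in the proof of Lemma \ref{special1}. Let $K_1$ be the field cut out by the projective image of $\rho$ (so $K_1 \supset F_0$) and $K_2 = F(\zeta_{5^\infty})$. By Lemma \ref{Z5}, the projective image of $\rho$ contains a conjugate of $\PGL_2(\bbZ_5)$, hence $K_1 \cap K_2 = F(\zeta_5)$ and $\Gal(K_1/F(\zeta_5))$ contains a conjugate of $\PSL_2(\bbZ_5)$. Let $L = K_1 K_2(\text{coefficients of }\rho_N)$ — more precisely, let $L/F$ be the finite Galois extension through which both $\rho_N$ and $\epsilon \bmod 5^{\,?}$ factor together with the splitting field data needed to pin down $q_v \bmod 5^2$; concretely $L \supset F_N := F_0(\zeta_{5^N})$ and $L$ trivializes $\rho_N$. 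Because $\psi$ is inflated from $\Gal(F_0/F)$, its restriction to $G_{F_v}$ is determined by the image of $\Frob_v$ in $\Gal(F_0/F)$ alone. So it suffices to produce, by Cebotarev, a positive-density set of primes $v$ of $F$, unramified in $L$, whose Frobenius conjugacy class $\sigma \in \Gal(L/F)$ satisfies: (i) $\sigma|_{F_0}$ is a unipotent element of order $5$ in $\PGL_2(\bbF_5)$; (ii) the restriction $\rho_N(\sigma)$ is conjugate (in $\GL_2(\cO/\lambda^N)$) to an upper-triangular matrix $\diag(\chi,\chi\epsilon^{-1})$-type element with the ratio of diagonal entries equal to $q_v = \epsilon(\sigma)^{-1}$; (iii) $\epsilon(\sigma)$ has order exactly $5$ in $(\bbZ/5^2)^\times$, i.e. $q_v \equiv 1 \bmod 5$, $q_v \not\equiv 1 \bmod 5^2$.

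The existence of such $\sigma$ is where the near-linear-disjointness from Lemma \ref{Z5} does the work, and this is the main obstacle. I would argue as follows: inside $\Gal(K_1/F)$, which surjects onto $\PGL_2(\bbF_5)$ and whose kernel of the map to $\PGL_2(\bbF_5)$ is pro-$5$, pick a lift $\tau_1$ of a unipotent order-$5$ element such that $\rho_N(\tau_1)$ is already of the desired upper-triangular shape with a prescribed $\chi$ — this is possible because the group $\rho_N(G_{K_1'})$ for $K_1'$ the fixed field of the projective kernel contains a congruence subgroup of $\SL_2(\bbZ_5)$ by Lemma \ref{Z5}, so one can adjust within the pro-$5$ part to make the off-diagonal entry match a genuine unipotent-of-order-$5$ while keeping the diagonal ratio free. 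Independently, on $\Gal(K_2/F) = \Gal(F(\zeta_{5^\infty})/F)$ (a subgroup of $\bbZ_5^\times$ of finite index $2$ by the exceptionality hypothesis $\zeta_5 \in F_0$, so really the index-$2$ subgroup cut out inside $(\bbZ/5^N)^\times$), pick $\tau_2$ with $\epsilon(\tau_2)$ of exact order $5$ modulo $5^2$; this is possible since that index-$2$ subgroup still surjects onto the order-$5$ quotient $(1+5\bbZ_5)/(1+25\bbZ_5)$. Because $K_1 \cap K_2 = F(\zeta_5)$ and both $\tau_1, \tau_2$ have the same (trivial or order-$5$, chosen compatibly) image in $\Gal(F(\zeta_5)/F)$, they glue to an element $\sigma \in \Gal(K_1 K_2/F)$; finally enlarge $K_1 K_2$ to $L$ and choose any conjugacy class lifting $\sigma$ — the resulting set of primes has positive density by Cebotarev. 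One then reads off properties (1)–(4): (1) is property (iii) on $\epsilon(\sigma)$; (2) holds since $v$ is unramified in $F_0 \subset L$; (3) follows from (i) together with the group-cohomology fact of the first paragraph ($\psi|_{G_{F_v}}$ factors through $\langle \overline\rho(\Frob_v)\rangle$ of order $5$, on which $\psi$ is non-zero, and it is unramified since $\psi$ is); and (4) is the shape of $\rho_N(\sigma)$ arranged in the construction, with $\alpha\beta^{-1} = \epsilon(\sigma)^{-1} = q_v$ forcing, by Lemma \ref{eigenvalues} applied to the characteristic polynomial, the existence of a genuine stable line so that $\rho_N|_{G_{F_v}}$ has the stated upper-triangular form with $\chi$ unramified. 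The one delicate bookkeeping point — ensuring that the off-diagonal entry of $\rho_N(\Frob_v)$ can be taken to be a unit times the "unipotent of order $5$" off-diagonal entry while keeping the diagonal ratio equal to the independently chosen $q_v$ — is handled precisely by the surjectivity statement in Lemma \ref{Z5}, which guarantees the image of inertia-free Frobenius data is large enough to decouple the two.
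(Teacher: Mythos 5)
Your proposal is correct and takes essentially the same route as the paper: the paper's proof is simply ``this follows from Lemma \ref{special1}'', whose proof is exactly your Cebotarev argument gluing a unipotent lift in the projective image (using Lemma \ref{Z5} and $K_1\cap K_2=F(\zeta_5)$) to a cyclotomic element of exact order $5$ modulo $25$. The one ingredient the paper leaves implicit --- that $\psi$ restricts nontrivially to $\langle\overline{\rho}(\Frob_v)\rangle$ when $\overline{\rho}(\Frob_v)$ is unipotent of order $5$ --- you supply correctly; the cleanest justification is that $\langle u\rangle$ is a Sylow $5$-subgroup of $\PGL_2(\bbF_5)$, so restriction to it is injective on $H^1$ with $5$-torsion coefficients.
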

\begin{proof}
 This follows from  Lemma \ref{special1}. 
\end{proof}
We now show that when a prime of type $\cD_v^{\St(\text{uni})}$ is present, we can use Taylor--Wiles primes to kill the remainder of the dual Selmer group.
\begin{prop}\label{TW}
Let $$\cS=(\rhobar,\mu,S,\{\Lambda_v\}_{v \in S},\{\cD_v\}_{v \in S} )$$ be a global deformation problem. Let $T \subset S$ and suppose that for each $v \in S-T$, $\cD_v$ is  
$\cD_v^{St(\alpha_v)}$ or $\cD_v^{\rm St(uni)}$ and there is at least one place of the latter type  in $S$. Then for each $N_1 \geq 1$, there exists a finite set of primes $Q_1$, disjoint from $S$, satisfying the following conditions:
\begin{enumerate} \item $\#Q_1=h^1_{\cS,T}(\Ad^0(1))$ and for each $v \in Q_1$, $q_v \equiv 1 \text{ mod }p^{N_1}$, and $\rhobar(\Frob_v)$ has distinct eigenvalues.
\item Define the augmented deformation  problem: 
\[ \cS_{Q_1}=(\rhobar,\mu,S \cup Q_1,\{\Lambda_v\}_{v \in S} \cup \{ \cO \}_{v \in Q_1},\{\cD_v\}_{v \in S} \cup \{\cD_v^\square\}_{v \in Q_1}).\]
  Then $h^1_{\cS_{Q_1},T}(\ad^0\rhobar(1))=0$.
\end{enumerate}
\end{prop}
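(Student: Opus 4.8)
The plan is to carry out the Taylor--Wiles prime selection of \cite[\S 5]{ThorneA} essentially verbatim; the one genuinely new point is that the hypothesis that $S$ contains a place of type $\cD_v^{\mathrm{St(uni)}}$ already removes from the dual Selmer group the single class that cannot be killed by Taylor--Wiles primes, namely the Lie class. First I would record this. Let $F_0 \subset \overline{F}$ be the fixed field of $\ker \ad^0\rhobar$, so $\Gal(F_0/F) \cong \PGL_2(\bbF_5)$ and, since $\rhobar$ is exceptional, $\zeta_5 \in F_0$; then $G_{F_0}$ acts trivially on $\ad^0\rhobar(1)$, and by the lemma preceding Proposition \ref{auxiliary} the group $H^1(\Gal(F_0/F), \ad^0\rhobar(1))$ is one-dimensional. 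Let $\cZ \subset H^1(F_S/F, \ad^0\rhobar(1))$ denote the line obtained from it by inflation, and pick $v_0 \in S$ with $\cD_{v_0} = \cD_{v_0}^{\mathrm{St(uni)}}$. Any class in $H^1_{\cS,T}(\ad^0\rhobar(1))$ has image at $v_0$ in the one-dimensional space $\cL_{v_0}^\perp$, whereas Proposition \ref{steinberg} says the image of $\cZ$ in $H^1(F_{v_0}, \ad^0\rhobar(1))$ is not contained in $\cL_{v_0}^\perp$, hence is a different line; so $\cZ \cap H^1_{\cS,T}(\ad^0\rhobar(1)) = 0$. Since the kernel of restriction $H^1(F_S/F, \ad^0\rhobar(1)) \to H^1(G_{F_0}, \ad^0\rhobar(1))$ is exactly $\cZ$ (inflation--restriction, using $\ad^0\rhobar(1)^{G_{F_0}} = \ad^0\rhobar(1)$), every nonzero $\psi \in H^1_{\cS,T}(\ad^0\rhobar(1))$ has nonzero restriction to $G_{F_0}$.

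Next I would reduce to finding one auxiliary prime at a time. If $Q$ is a finite set of primes $v \notin S$ with $\rhobar$ unramified at $v$, $q_v \equiv 1 \text{ mod }p$ and $\rhobar(\Frob_v)$ having distinct eigenvalues, and at each $v \in Q$ we impose the condition $\cD_v^\square$, then a standard computation gives $H^1_{\cS_Q, T}(\ad^0\rhobar(1)) = \ker\bigl(H^1_{\cS,T}(\ad^0\rhobar(1)) \to \bigoplus_{v \in Q} H^1_{\mathrm{ur}}(F_v, \ad^0\rhobar(1))\bigr)$, where each summand on the right is one-dimensional. So if we build $Q_1$ one prime at a time, at each stage choosing a nonzero class $\psi$ in the kernel accumulated so far (which lies in $H^1_{\cS,T}(\ad^0\rhobar(1))$, hence avoids $\cZ$ by the previous paragraph), it is enough to show: for every nonzero $\psi \in H^1_{\cS,T}(\ad^0\rhobar(1))$ and every $N_1 \geq 1$ there is a positive-density set of primes $v \notin S$ with $q_v \equiv 1 \text{ mod }p^{N_1}$, $\rhobar(\Frob_v)$ having distinct eigenvalues, and $\psi|_{G_{F_v}}$ nonzero in $H^1_{\mathrm{ur}}(F_v, \ad^0\rhobar(1))$. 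Each such prime cuts the running kernel down by exactly one dimension, so after $h^1_{\cS,T}(\ad^0\rhobar(1))$ steps the kernel is trivial, which is the desired conclusion $h^1_{\cS_{Q_1},T}(\ad^0\rhobar(1)) = 0$; the bookkeeping here (via Wiles' Euler characteristic formula) is identical to \cite[\S 5.2]{ThorneA}.

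Finally I would produce the prime for a given nonzero $\psi$ by the Chebotarev density theorem, in the spirit of Lemma \ref{special1} but now tracking $\psi$ as well. Put $F_{N_1} = F_0(\zeta_{p^{N_1}})$. Using that $\ad^0\rhobar(1)$ is an irreducible, nontrivial $\bbF_5[\PGL_2(\bbF_5)]$-module (fact 3 of Lemma \ref{Z5}) and that $\Gal(F_{N_1}/F)$ is a fibre product of $\PGL_2(\bbF_5)$ with an abelian group over $\bbZ/2\bbZ$ --- so $\Gal(F_{N_1}/F_0)$ is central and $\PGL_2(\bbF_5)$ acts trivially on it --- one checks that $\psi|_{G_{F_{N_1}}} \neq 0$, that it cuts out an extension $L/F_{N_1}$, Galois over $F$, with $\Gal(L/F_{N_1}) \cong \ad^0\rhobar(1)$, and that $\psi$ factors through $\Gamma = \Gal(L/F)$. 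Choose $\bar\sigma \in \Gal(F_{N_1}/F(\zeta_{p^{N_1}})) \cong \PSL_2(\bbF_5)$ whose image in $\Gal(F_0/F)$ is a regular semisimple element of order $2$ of $\PSL_2(\bbF_5)$; its eigenvalue ratio is $-1 \neq 1$, so it has distinct eigenvalues, and its fixed space on $\ad^0\rhobar(1)$ is one-dimensional. The lifts of $\bar\sigma$ to $\Gamma$ form a coset of $\Gal(L/F_{N_1}) = \ad^0\rhobar(1)$ on which $\psi$ restricts to the identity map, so we may choose a lift $\sigma$ with $\psi(\sigma) \notin (\sigma - 1)\ad^0\rhobar(1)$. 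By Chebotarev there is a positive-density set of primes $v \notin S$ with $\Frob_v$ conjugate to $\sigma$ in $\Gamma$; any such $v$ is unramified in $L$ (so $\psi|_{G_{F_v}}$ is unramified), satisfies $q_v \equiv 1 \text{ mod }p^{N_1}$ because $\sigma$ fixes $F(\zeta_{p^{N_1}})$, has $\rhobar(\Frob_v)$ with distinct eigenvalues, and has $\psi(\Frob_v) \notin (\Frob_v - 1)\ad^0\rhobar(1)$, i.e.\ $\psi|_{G_{F_v}} \neq 0$ in $H^1_{\mathrm{ur}}(F_v, \ad^0\rhobar(1))$, as required.

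I expect the main obstacle to be this last step: one must make precise the near-linear-disjointness over $F_0$ of the cyclotomic tower from the field cut out by $\psi$ --- for which the key ingredient is that the projective image of $\rho$ is large (Lemma \ref{Z5}) --- and verify that, once the one-dimensional Lie line has been excised via the $\cD_v^{\mathrm{St(uni)}}$ place, the irreducibility of $\ad^0\rhobar(1)$ as a $\PGL_2(\bbF_5)$-module leaves no further obstruction to the Chebotarev argument. By contrast, the first step (a clean input from Proposition \ref{steinberg}) and the Euler-characteristic bookkeeping in the second step are routine adaptations of \cite{ThorneA}.
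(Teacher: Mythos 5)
Your proposal is correct and follows essentially the same route as the paper's proof: use Proposition \ref{steinberg} at the $\cD_v^{\mathrm{St(uni)}}$ place to excise the one-dimensional Lie line from the dual Selmer group, show the remaining classes restrict nontrivially to $G_{F_0(\zeta_{p^{N_1}})}$, and then run the standard Chebotarev/Taylor--Wiles prime selection. The only difference is one of exposition — the paper compresses your second and third paragraphs into a citation of ``the usual arguments'' (\cite[Theorem 2.49]{DDT}, \cite[Proposition 5.20]{ThorneA}), which you have correctly unpacked.
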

\begin{proof}
By Proposition \ref{steinberg}, the existence of a place $v \in S$ with deformation problem $\cD_v^{\rm St(uni)}$ implies that the Lie subspace of  $H^1_{\cS,T}(S, \Ad^0 \rhobar(1))$ is zero.  Further one checks that $$H^1({\rm Gal}(F_m/F_0),\Ad^0\rhobar(1))^{G_F}= \Hom(\Gal(F_m/F_1),\Ad^0\rhobar(1)^{G_F})=0$$ using that $\rhobar|_{G_{F(\zeta_p})}$  is irreducible. Thus   $H^1_{\cS,T}(S, {\rm Ad}^0\rhobar(1))$  maps injectively under the restriction map to $H^1(F_m,\ad^0\rhobar(1))$, and then by the usual arguments (see \cite[Theorem 2.49]{DDT} and \cite[Proposition 5.20]{ThorneA})  one can find a Taylor--Wiles set  $Q_1$ with the required properties.
\end{proof}
\section{$R=\T$}\label{sec_r_equals_t}
Let $p = 5$, let $E$ be a coefficient field, and let $F$ a totally real number field of even degree over $\Q$. We fix an absolutely irreducible, totally odd  and continuous representation $\rhobar:G_F \ra \GL_2(k)$ satisfying the following conditions.
\begin{itemize}

\item $\rhobar$ is exceptional.

\item For each place $ v$ of $F$ prime to $p$, $\rhobar|_{F_v}$ is unramified.

\item For each place $v|p$ of $F$, $\rhobar|_{G_{F_v}}$ is trivial.

\item The character $\epsilon\det \rhobar$ is everywhere unramified.

\end{itemize}

We assume $k$ large enough that it contains the eigenvalues of every element of the image of $\rhobar$. We write $\psi:G_F \ra \Oc^*$ for the Teichm\"uller lift of  $\epsilon \det \rhobar$. In what follows we will abuse notation by also writing $\psi$ for the character $\psi \circ \Art_F:\A_F^{\times} \ra \Oc^\times$. We will also suppose given the following data:

\begin{itemize}
\item A finite set $R$ of even cardinality of finite places of $F$, such that for $v \in R$, $q_v \equiv 1 \text{ mod }p$,  and $\rhobar|_{G_{F_v}}$ is trivial.

\item  A finite set $Q_0$ of finite places of even cardinality, disjoint from $S_p \cup R$, which can be decomposed as $V_1 \cup V_2$,  and a tuple $(\alpha_v)_{v \in V_1}$ of elements in $k$, such that for $v \in V_1$ the deformation problem $\cD_v^{\rm St(\alpha_v)}$ is defined and for $v \in V_2$ the deformation problem $\cD_v^{\rm St(uni)}$ is defined. In particular, $q_v \equiv -1 \text{ mod }p$ if $v \in V_1$ and $q_v \equiv 1 \text{ mod }p$ if $v \in V_2$.

\item An isomorphism $\iota: \overline{\Q}_p \cong \C$, and a cuspidal automorphic representation $\pi_0$ of weight $2$ satisfying the following conditions:
\begin{itemize} \item There is an isomorphism $\overline{r_\iota(\pi)} \cong \rhobar.$
\item The central character of $\pi_0$ equals $\psi$.
\item For each finite place $v \notin S_p \cup R \cup Q_0$ of $F$, $\pi_{0,v}$ is unramified.
\item For each $v \in R \cup Q_0$, there is an unramified character $\chi_v : F_v^\times \ra \overline{\Q}_p^\times$ and an isomorphism  $\pi_{0,v} \cong {\rm St}(\iota\chi_v)$, For each $v \in V_1$, $\chi_0(\varpi_v)$ is congruent to $\alpha_v$ modulo the maximal ideal of $\overline{\Z}_p$.
\item Let $\sigma \subset S_p$ denote the set of places $v$ such that $\pi_{0,v}$ is $\iota$-ordinary. For each $v \in \sigma$, $\pi_{0,v}^{U_0(v)} \neq 0$. For each $v \in S_p -\sigma$, $\pi_{0,v}$ is unramified. 
\end{itemize}
\end{itemize}
We consider the global deformation problem 
\[\cS=(\rhobar,\epsilon^{-1}\psi,S_p \cup Q_0 \cup R, \{\Lambda_v\}_{v \in \sigma} \cup \{\Oc\}_{v \in (S_p-\sigma) \cup Q_0\cup R},$$$$ \{\cD_v^{\text{ord}}\}_{v \in \sigma} \cup \{\cD_v^{\text{non-ord}}\}_{v\in S_p -\sigma} \cup \{\cD_v^{\rm St(\alpha_v)}\}_{v \in V_1} \cup \{\cD_v^{\rm St (uni)}\}_{v \in V_2}\cup \{\cD_v^{\rm St}\}_{v \in R}).\]
The local deformation problems are as defined in \cite[\S 5.3]{ThorneA} and \S \ref{sec_another_local_deformation_problem} of the current paper. The rings $\Lambda_v$ for $v \in \sigma$ are the local Iwasawa algebras, as in \cite[\S 6]{ThorneA}. We fix an auxiliary place $a$ of $F$ as in the following lemma:
\begin{lemma}\label{trivial}
There exists a place $a \notin S_p \cup Q_0 \cup R$ such that $q_a >4^{[F:\Q]}$, $\tr \rhobar(\Frob_a)/\det \rhobar(\Frob_a) \neq (1+q_a)^2/q_a$, and $q_a \not\equiv 1 \text{ mod }p$.
\end{lemma}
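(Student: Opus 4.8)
The plan is to produce $a$ by the Cebotarev density theorem, prescribing the conjugacy class of $\Frob_a$ inside $\Gal(\overline{F}^{\ker\rhobar}/F)\cong\rhobar(G_F)\subseteq\GL_2(k)$. The conditions $q_a>4^{[F:\Q]}$ and $a\notin S_p\cup Q_0\cup R$ are then automatic, since any nonempty Cebotarev class contains infinitely many primes of $F$ and we may discard the finitely many that violate either constraint. So the real content is to exhibit one element $g\in\rhobar(G_F)$ whose conjugacy class simultaneously forces $q_a\not\equiv 1\bmod 5$ and forces the numerical non-degeneracy inequality; everything else is bookkeeping.

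First I would record what exceptionality gives us. Since $\rhobar$ is exceptional, $\ad^0\rhobar(G_F)=\PGL_2(\bbF_5)$, so the projective image of $\rhobar$ is all of $\PGL_2(\bbF_5)\cong S_5$ and in particular $\rhobar(G_F)\twoheadrightarrow\PGL_2(\bbF_5)$. Writing $F_0$ for the fixed field of $\ad^0\rhobar$, the field $F(\zeta_5)$ is the unique quadratic subextension of $F_0/F$ and corresponds to $\PSL_2(\bbF_5)=A_5\triangleleft S_5$. Consequently the mod $5$ cyclotomic character factors through $\Gal(F_0/F)\to\Gal(F_0/F)^{\mathrm{ab}}=S_5/A_5$ and is identified with the sign character valued in $\{\pm1\}\subseteq\bbF_5^\times$. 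Hence for every place $v$ at which $\rhobar$ is unramified one has $q_v\equiv\pm1\bmod 5$, with $q_v\equiv 1\bmod 5$ if and only if the projective image of $\rhobar(\Frob_v)$ is an even permutation.

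Next I would choose $g\in\rhobar(G_F)$ lifting a $4$-cycle $\bar g\in\PGL_2(\bbF_5)\cong S_5$; this is possible because $\rhobar(G_F)\twoheadrightarrow\PGL_2(\bbF_5)$ and every element of $\PGL_2(\bbF_5)$ lifts to $\GL_2(\bbF_5)$. As $A_5$ contains no element of order $4$, the order-$4$ element $\bar g$ generates a split maximal torus of $\PGL_2(\bbF_5)$, so $g$ is conjugate to $\diag(c\lambda,c\mu)$ with $\lambda/\mu$ of order $4$ in $\bbF_5^\times$, i.e. $\lambda/\mu\in\{2,3\}$; in particular $\tr g=c(\lambda+\mu)\neq0$ and $\det g=c^2\lambda\mu\neq0$ in $k$. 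Applying the Cebotarev density theorem to $\overline{F}^{\ker\rhobar}/F$, I pick a place $a\notin S_p\cup Q_0\cup R$ with $q_a>4^{[F:\Q]}$ and $\rhobar(\Frob_a)$ conjugate to $g$. The projective image of $\rhobar(\Frob_a)$ is the odd permutation $\bar g$, so by the previous paragraph $q_a\equiv -1\bmod 5$, whence $q_a\not\equiv 1\bmod 5$ and also $(1+q_a)^2/q_a\equiv 0$ in $k$; on the other hand $\tr\rhobar(\Frob_a)/\det\rhobar(\Frob_a)=(\lambda+\mu)/(c\lambda\mu)\neq0$ in $k$. Thus all three required conditions hold.

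I do not expect a genuine obstacle. The only point that demands care is the identification in the second paragraph --- that under exceptionality the mod $5$ cyclotomic character is precisely the sign character of the $S_5$-quotient, so that "$q_a\not\equiv 1\bmod 5$" is equivalent to "the projective image of $\rhobar(\Frob_a)$ is odd" --- together with the elementary fact that a $4$-cycle in $\PGL_2(\bbF_5)$ is a split torus element with eigenvalue ratio $2$ or $3$ and nonzero trace. Everything else is a routine application of Cebotarev.
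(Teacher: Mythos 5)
Your argument is correct, but it takes a genuinely different route from the paper. The paper disposes of this lemma by citing a general result of Jarvis (\cite[Lemma 12.3]{Jar99} and the remarks following it), which produces such a place $a$ under the sole hypothesis that $\overline{\rho}|_{G_{F(\zeta_p)}}$ is irreducible. You instead give a self-contained Cebotarev argument that leans hard on exceptionality: the identification of the mod $5$ cyclotomic character with the sign character of the $S_5$-quotient (so that $q_v \equiv 1 \bmod 5$ iff the projective Frobenius is even), and the fact that a $4$-cycle in $\PGL_2(\bbF_5)$ is a regular split-torus element with eigenvalue ratio $2$ or $3$, hence nonzero trace, while $(1+q_a)^2/q_a \equiv 0$ in $k$ when $q_a \equiv -1$. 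All of these steps check out, and under the standing hypotheses of the section (where $\rhobar$ is assumed exceptional) your proof is complete. What the paper's approach buys is generality: Lemma \ref{trivial} is also invoked in Lemma \ref{types}, where the only hypothesis is irreducibility of $\overline{r_\iota(\pi)}|_{G_{F(\zeta_p)}}$ with no exceptionality assumed, and your argument would not cover that case. What your approach buys is transparency --- it makes explicit which conjugacy class does the work and why the non-degeneracy inequality reduces to $\tr \neq 0$ versus $0$ --- at the cost of being tied to $p=5$ and projective image $\PGL_2(\bbF_5)$.
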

\begin{proof}
See \cite[Lemma 12.3]{Jar99} and the remarks immediately afterwards. We only need that $\overline{\rho}|_{G_{F(\zeta_p)}}$ is irreducible (which is certainly the case if $\overline{\rho}$ is exceptional).
\end{proof}
 We set $\Lambda = \widehat{\otimes}_{v \in \sigma} \Lambda_v$, and write $\Lambda_n$ for its `level $n$' quotient, as in \cite[\S 6.2]{ThorneA}. The situation is now exactly as in \cite[\S 6]{ThorneA}. We can take the quaternion algebra $B$ ramified at $Q_0 \cup R \cup \{ v | \infty \}$, and an appropriate level subgroup $U \subset (B \otimes_F \bbA_F^{\infty})^\times$ as in \cite[\S 6.1]{ThorneA} (with $U^1_1(a)$-level structure at the place $a$). If $n \geq 1$, then we can define the spaces $H_\psi^\text{ord}(U_1(\sigma^n), \cO)$ and $H_\psi^\text{ord}(U_1(\sigma^n), k)$ of $\sigma$-ordinary modular forms on $B$ of level $U_1(\sigma^n)$, weight 2, and central character $\psi$. The existence of $\pi_0$, together with the Jacquet--Langlands correspondence, implies the existence of a maximal ideal ${\mathfrak m}$ of the Hecke algebra $\bbT^{\Lambda, S_p \cup Q_0 \cup R \cup \{ a \}}_{Q_0}(H_\psi(U_1(\sigma^n), \cO))$ such that $\overline{\rho}_{\mathfrak m} \cong \overline{\rho}$, and we then obtain a surjection $R_{\cS} \to \bbT^{\Lambda, S_p \cup Q_0 \cup R \cup \{ a \}}_{Q_0}(H_\psi(U_1(\sigma^n), \cO))_\ffrm$ (cf. \cite[Proposition 6.5]{ThorneA}).  

Having introduced this notation, we can state the following result, which is the analogue in our situtation of \cite[Corollary 6.7]{ThorneA}:
\begin{theorem}\label{cor_r_equals_t}
Let $C, N, n \geq 1$ be integers, and suppose that $V_2 \neq \emptyset$. Suppose that $\dim_k H_\psi^\text{ord}(U_1(\sigma^n), k)[\ffrm] \leq C$, and that there is a diagram
\[ \xymatrix{ \Lambda \ar[r] \ar[d] & \Lambda_n \ar[d] \\
R_\cS \ar[r] & \cO/\lambda^N, }\]
corresponding to a lifting $\rho_N : G_F \to \GL_2(\cO/\lambda^N)$ of $\overline{\rho}$ which is of type $\cS$. Let  $I = \ker ( R_\cS \to \cO/\lambda^{\lfloor N/C \rfloor} )$. Then $(H_\psi(U_1(\sigma^n), \cO)_\ffrm \otimes_\cO \cO/\lambda^{\lfloor N/C \rfloor})[I]$ contains an $\cO$-submodule isomorphic to $\cO/\lambda^{\lfloor N/C \rfloor}$, and the map $R_\cS \to \cO/\lambda^{\lfloor N/C \rfloor}$ factors 
\[ R_\cS \twoheadrightarrow \bbT^{\Lambda, S_p \cup Q_0 \cup R \cup \{ a \}}_{Q_0}(H^\text{ord}_\psi(U_1(\sigma^n), \cO)_\ffrm) \to \cO/\lambda^{\lfloor N/C \rfloor}. \]
\end{theorem}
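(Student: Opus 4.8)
The plan is to obtain this statement as the output of the Taylor--Wiles--Kisin patching method, carried out exactly as in \cite[\S 6]{ThorneA}, so that up to the input described below the argument is that of \cite[Corollary 6.7]{ThorneA}. The only essentially new ingredient is the construction of the auxiliary Taylor--Wiles primes, and it is here that the hypothesis $V_2 \neq \emptyset$ intervenes: since some place $v \in S$ carries the local deformation problem $\cD_v^{\rm St(uni)}$, Proposition \ref{steinberg} guarantees that the Lie subspace of $H^1_{\cS, T}(\Ad^0 \rhobar(1))$ vanishes, and Proposition \ref{TW} then applies, producing for each $N_1 \geq 1$ a finite set $Q_1 = Q_{N_1}$ of places of $F$, disjoint from $S$, with $\# Q_1 = h^1_{\cS, T}(\Ad^0 \rhobar(1))$ independent of $N_1$, with $q_v \equiv 1 \bmod p^{N_1}$ and $\rhobar(\Frob_v)$ of distinct eigenvalues for each $v \in Q_1$, and with $h^1_{\cS_{Q_1}, T}(\Ad^0 \rhobar(1)) = 0$.

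Given this input, I would transcribe the patching argument of \cite[\S 6]{ThorneA}. Fix $T$ as in \emph{loc.\ cit.}\ (the places at which the local lifting ring fails to be formally smooth of dimension $4$ over $\cO$, together with the auxiliary place $a$, and if needed some further auxiliary places). The structure lemma of \S \ref{sec_deformation_theory}, applied to the augmented problem $\cS_{Q_1}$ in view of $h^1_{\cS_{Q_1}, T}(\Ad^0 \rhobar(1)) = 0$, exhibits $R^T_{\cS_{Q_1}}$ as a quotient of a power series ring over $A^T_\cS = \widehat{\otimes}_{v \in T} R_v$ in a number of variables independent of $N_1$. On the automorphic side, one level-raises $H^{\text{ord}}_\psi(U_1(\sigma^n), \cO)_\ffrm$ to $Q_1$ as in \S \ref{raising}; the resulting module carries commuting actions of $R_{\cS_{Q_1}}$ (through the surjection recalled just before the statement, for the augmented problem) and of the diamond operators attached to the $p$-parts of the $k(v)^\times$, $v \in Q_1$, and by Proposition \ref{bound}, refined at a Taylor--Wiles prime to the relevant $U_v$-eigenspace, the bound $\dim_k(\,\cdot \otimes_\cO k)[\ffrm] \leq C$ is preserved. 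Framing at $T$, passing to a convergent subsequence in $N_1$ and patching in the usual way, one obtains a power series ring $S_\infty$ over $\Lambda \,\widehat{\otimes}\, A^T_\cS$, a finite $S_\infty$-module $H_\infty$ with $\dim_k H_\infty \otimes_{S_\infty} k \leq C$, and a ring $R_\infty$ (a quotient of a power series ring over $A^T_\cS$) acting $S_\infty$-linearly on $H_\infty$ and receiving a map from $R^T_\cS$; the dimension count of \cite[\S 6]{ThorneA}, which goes through unchanged given the local computations for the deformation problems at $p$, at $R$, at $V_1$, and (Proposition \ref{steinberg}) at $V_2$, gives $\dim R_\infty = \dim S_\infty$.

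The conclusion is then the commutative-algebra endgame of \cite[\S 6]{ThorneA}. Because $S_\infty$ is regular and $H_\infty$ is a nonzero finite $S_\infty$-module (nonzero since $\pi_0$ contributes to it), patched from modules finite free over $\cO$, the module $H_\infty$ has full support over $R_\infty$, and the component of $\Spec R_\infty$ through which the action factors --- connected, by the local computations --- is all of $\Spec R_\infty$. The lifting $\rho_N$, together with the compatibility with the $\Lambda_n$-structure encoded in the given diagram, determines a $\Lambda$-algebra map $R_\cS \to \cO/\lambda^N$, hence after the patching identifications a map $R_\infty \to \cO/\lambda^N$; pushing this through $H_\infty$, keeping track of the $I$-torsion, and unpatching (quotienting by the augmentation ideal of the patching variables and returning to level $U_1(\sigma^n)$) yields the two assertions: that $(H_\psi(U_1(\sigma^n), \cO)_\ffrm \otimes_\cO \cO/\lambda^{\lfloor N/C \rfloor})[I]$ contains an $\cO$-submodule isomorphic to $\cO/\lambda^{\lfloor N/C \rfloor}$, and that $R_\cS \to \cO/\lambda^{\lfloor N/C \rfloor}$ factors through $\bbT^{\Lambda, S_p \cup Q_0 \cup R \cup \{ a \}}_{Q_0}(H^{\text{ord}}_\psi(U_1(\sigma^n), \cO)_\ffrm)$. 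The loss of the factor $1/C$ is precisely the price of using the a priori multiplicity bound $C$ in place of a multiplicity-one statement, which is not available in this setting.

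I do not anticipate a genuinely new difficulty here: the hard, new Galois-theoretic content --- killing the Lie classes in the dual Selmer group by means of a $\cD_v^{\rm St(uni)}$ condition, and the existence of the relevant auxiliary primes --- is already contained in Propositions \ref{steinberg} and \ref{TW}, and the behavior of the cohomology of the Shimura curves and Hida varieties at places with $q_v \equiv 1 \bmod p$ and $\rhobar(\Frob_v)$ unipotent of order $p$ was dealt with in \S \ref{sec_shimura_curves_and_hida_varieties}. The point requiring the most care is therefore the bookkeeping: checking that level-raising at the Taylor--Wiles primes of Proposition \ref{TW} introduces a free module over the diamond group and preserves the bound $C$, and that the dimension identity $\dim R_\infty = \dim S_\infty$ survives the presence of the $\cD_v^{\rm St}$ (for $v \in R$) and $\cD_v^{\rm St(uni)}$ (for $v \in V_2$) local conditions --- both of which reduce to the local results recalled above.
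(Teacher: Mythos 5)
Your proposal follows the paper's proof in essentially the same way: the paper simply declares the argument to be that of \cite[Corollary 6.7]{ThorneA} with the Taylor--Wiles prime lemma replaced by Lemma \ref{lem_new_tw_prime} (which rests on Proposition \ref{auxiliary} and, via $V_2 \neq \emptyset$ and Proposition \ref{steinberg}, on the vanishing of the Lie subspace of dual Selmer), and this is precisely the new input you isolate before transcribing the standard patching and unpatching. The one small inaccuracy is that the Taylor--Wiles primes $Q_1$ are incorporated by adding auxiliary level structure with its diamond-operator action as in \cite[\S 6]{ThorneA}, not by the quaternionic level-raising of \S \ref{raising} (which is reserved for the primes of $Q_0$); this does not affect the argument.
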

\begin{proof}
The proof of Theorem \ref{cor_r_equals_t} is exactly the same as the proof of \cite[Corollary 6.7]{ThorneA}, except that references to \cite[Lemma 6.9]{ThorneA} should be replaced by references to Lemma \ref{lem_new_tw_prime} below.
\end{proof}

\begin{lemma}\label{lem_new_tw_prime}
Let $q=h^1_{\cS,T}(\Ad^0\rhobar(1))$, and suppose that $V_2 \neq \emptyset$. Then for all integers $N \geq 1$, there exists a set $Q_N$ of places of $F$ satisfying the following conditions.
\begin{enumerate}
\item $Q_N \cap (S_p \cup Q_0 \cup R \cup \{a\})=\emptyset$ and $\#Q=q$.
\item For each $v \in Q_N$, $q_v \equiv 1 \text{ mod }p^N$.
\item For each $v \in Q_N$, $\rhobar(\Frob_v)$ has distinct eigenvalues $\alpha_v,\beta_v$ in $k$.
\item Consider the augmented deformation problem
\[\cS_{Q_N} =(\rhobar,\epsilon^{-1}\psi,S_p \cup Q_0 \cup R \cup Q_N, \{\Lambda_v\}_{v \in \sigma} \cup \{\Oc\}_{v \in (S_p-\sigma) \cup Q_0\cup R \cup Q_N},\]
\[ \{\cD_v^{\text{ord}}\}_{v \in \sigma} \cup \{\cD_v^{\text{non-ord}}\}_{v\in S_p -\sigma} \cup \{\cD_v^{\rm St(\alpha_v)}\}_{v \in V_1} \cup \{\cD_v^{\rm St (uni)}\}_{v \in V_2}\cup \{\cD_v^{\rm St}\}_{v \in R} \cup \{ \cD_v^\square \}_{v \in Q_N}).\]
Then $h^1_{S_{Q_N},T}(\Ad^0\rhobar(1))=0$.
\item The ring $R_{\cS_{Q_N}}^T$ can be written as a quotient of a power series over $A_{\cS_{Q_N}}^T=A_{\cS}^T$ in $q-[F:\Q]-1+\#T$ variables.
\end{enumerate}
\end{lemma}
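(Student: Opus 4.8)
The plan is to run the Taylor--Wiles prime construction of \cite[\S 6]{ThorneA} for the ordinary/Steinberg deformation problem $\cS$, modified exactly as in the proof of Proposition \ref{TW} so as to dispose simultaneously of the exceptional Lie classes. Write $q = h^1_{\cS,T}(\Ad^0 \rhobar(1))$. Since $V_2 \neq \emptyset$, fix a place $v_0 \in V_2$; in $\cS$ the local deformation condition at $v_0$ is $\cD_{v_0}^{\rm St(uni)}$, and it is exactly the presence of such a place that will let us remove the Lie contribution to the dual Selmer group.

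The first thing I would establish is that the restriction map $H^1(F_S/F, \Ad^0 \rhobar(1)) \to H^1(F_m, \Ad^0 \rhobar(1))$ is injective on $H^1_{\cS,T}(\Ad^0 \rhobar(1))$ for every $m \geq 1$, where $F_m = F_0(\zeta_{5^m})$. I would argue this exactly as in the proof of Proposition \ref{TW}. On one hand, Proposition \ref{steinberg}, applied at the place $v_0$ and using that $\rhobar$ is exceptional, shows that the image of the one-dimensional Lie subspace of $H^1(F_S/F, \Ad^0 \rhobar(1))$ in $H^1(F_{v_0}, \Ad^0 \rhobar(1))$ is not contained in $\cL_{v_0}^\perp$; since every class in the dual Selmer group must restrict into $\cL_{v_0}^\perp$ at $v_0$, no nonzero Lie class survives in $H^1_{\cS,T}(\Ad^0 \rhobar(1))$. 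On the other hand, the inflation--restriction sequence along $F \subset F_0 \subset F_m$, together with the vanishing
\[ H^1(\Gal(F_m/F_0), \Ad^0 \rhobar(1))^{G_F} = \Hom(\Gal(F_m/F_0), \Ad^0\rhobar(1)^{G_F}) = 0 \]
(which holds because $\rhobar|_{G_{F(\zeta_p)}}$ is irreducible, so $\Ad^0\rhobar(1)^{G_F} = 0$, and because $\Gal(F_0/F)$ acts trivially by conjugation on the abelian group $\Gal(F_m/F_0)$), identifies the kernel of the restriction map with the Lie subspace. Combining the two assertions yields the injectivity.

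Granting this injectivity, I would invoke the standard Cebotarev density argument (\cite[Theorem 2.49]{DDT}, \cite[Proposition 5.20]{ThorneA}) to produce, for each $N \geq 1$, a set $Q_N$ of exactly $q$ finite places of $F$, disjoint from $S_p \cup Q_0 \cup R \cup \{ a \}$, with $q_v \equiv 1 \bmod p^N$ and $\rhobar(\Frob_v)$ having distinct eigenvalues in $k$ for each $v \in Q_N$, and such that imposing the unrestricted condition $\cD_v^\square$ at the places of $Q_N$ annihilates the dual Selmer group, i.e. $h^1_{\cS_{Q_N},T}(\Ad^0 \rhobar(1)) = 0$; the remaining local conditions of $\cS$ at $\sigma$, $V_1$, $V_2$ and $R$ play no role here, since they enter only through the term $H^1_{\cS,T}$ that is being annihilated. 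This gives conditions (1)--(4). For (5), I would appeal to the dimension bound of \S\ref{sec_deformation_theory} together with (4), exactly as in the proof of \cite[Lemma 6.9]{ThorneA}: the local condition at $v \in Q_N$ is the framed ring $R_v^\square$, which at such a Taylor--Wiles prime is $\cO$-flat of the expected relative dimension, and the count $\#Q_N = q$ provides precisely the cancellation that keeps the number of free parameters equal to $q - [F:\Q] - 1 + \#T$, independently of $N$.

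The hard part is the injectivity of the second paragraph, and its crux has already been isolated and proved as Proposition \ref{steinberg}: it is the presence of a $\cD_v^{\rm St(uni)}$ place, in the exceptional situation, that kills the Lie subspace of the dual Selmer group---a subspace which, as explained in the introduction, cannot be killed by Taylor--Wiles primes alone. Once the Lie classes are gone, the remainder of the dual Selmer group behaves exactly as in an adequate situation, and the construction of $Q_N$ and the verification of (5) are routine, paralleling \cite[\S 6]{ThorneA}.
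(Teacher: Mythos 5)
Your proposal is correct and follows exactly the route the paper intends: the paper's own proof is a one-line citation to \cite[Proposition 5.6]{ThorneA} together with the argument of Proposition \ref{TW} (using Proposition \ref{steinberg} at a place of $V_2$ to eliminate the Lie subspace, inflation--restriction to get injectivity into $H^1(F_m,\Ad^0\rhobar(1))$, and the standard Cebotarev construction of Taylor--Wiles primes), and your write-up simply unpacks these steps. No discrepancies to report.
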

\begin{proof}
This follows immediately from \cite[Proposition 5.6]{ThorneA} and Proposition \ref{auxiliary}.
\end{proof}

\section{Ordinary automorphic liftings of exceptional $\rhobar$}\label{ordinary}
 Let $p = 5$, and fix throughout this section a choice of isomorphism $\iota : \overline{\bbQ}_5 \cong \bbC$. In this section, which can be read independently from the rest of this paper, we prove the following theorem. The role it plays in this paper is analogous to the role of \cite[Lemma 7.3]{ThorneA} in \emph{op. cit.}.
\begin{theorem}\label{thm_existence_of_ordinary_lift_of_exceptional_representation}
Let $F$ be a totally real field. Suppose that $\overline{\rho} : G_F \to \GL_2(\overline{\bbF}_5)$ is an automorphic and exceptional residual representation. Then there exists a soluble totally real extension $F'/F$ and a cuspidal automorphic representation $\pi$ of $\GL_2(\bbA_{F'})$ of weight 2 and satisfying the following conditions:
\begin{enumerate}
\item $\pi$ is everywhere unramified and $\iota$-ordinary.
\item There is an isomorphism $\overline{r_\iota(\pi)} \cong \overline{\rho}|_{G_{F'}}$.
\end{enumerate}
\end{theorem}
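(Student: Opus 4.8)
\emph{Sketch of the intended argument.} The plan is to follow the strategy of \cite{Bar12} for constructing ordinary automorphic lifts, feeding in the automorphy lifting theorem for $\GL_4$ established earlier in this section at the point where \emph{loc.\ cit.} would invoke one whose adequacy hypotheses fail here. First I would use soluble base change for $\GL_2$ (Langlands, Arthur--Clozel) together with Lemma \ref{bc} to replace $F$ by a soluble totally real extension over which $[F:\Q]$ is even, $\overline\rho$ is unramified at every finite place away from $5$ and trivial on every decomposition group above $5$ (both possible since the relevant local Galois groups have prosolvable image, so their images under $\overline\rho$ are solvable), while $\overline\rho$ remains exceptional, irreducible on restriction to $G_{F(\zeta_5)}$, and automorphic. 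Since a soluble totally real extension of a soluble totally real extension of $F$ is again soluble over $F$, and the statement permits any soluble $F'$, I am free to enlarge $F$ at each subsequent step.

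Next I would construct a continuous geometric lift $\rho : G_F \to \GL_2(\overline{\bbZ}_5)$ of $\overline\rho$ which is $\iota$-ordinary at every $v \mid 5$ and which is ramified, apart from at $5$, only at a finite auxiliary set $\Sigma$ of primes of type $\cD_v^{\mathrm{St(uni)}}$. Because $\overline\rho|_{G_{F_v}}$ is trivial for $v \mid 5$, the ordinary crystalline local deformation condition is nonempty and formally smooth of dimension $4$; by Proposition \ref{steinberg} a single St(uni) place kills the Lie subspace of the dual Selmer group, by Propositions \ref{auxiliary} and \ref{TW} the remaining dual Selmer is annihilated by adjoining Taylor--Wiles primes, and the resulting global deformation ring then admits a point valued in the ring of integers of a finite extension of $\Q_5$. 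This is the part of the construction of \cite{Bar12} that must be modified in the exceptional case, and it is where Lemma \ref{Z5} and the auxiliary-prime results of \S\ref{novel} enter.

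To prove that this $\rho$ is automorphic I would pass through $\GL_4$. Choose an imaginary CM quadratic extension $K/F$ in which every place above $5$ splits and a Hecke character $\theta$ of $K$ of appropriate infinity type and trivial conductor, so that $\tau := \Ind_{G_K}^{G_F}\theta$ is irreducible and $R := \rho \otimes \tau$ is geometric, essentially self-dual, and $\iota$-ordinary above $5$. Then $\overline R = \Ind_{G_K}^{G_F}(\overline\rho|_{G_K} \otimes \overline\theta)$ is automorphic: $\overline\rho|_{G_K}$ is automorphic by base change of $\overline\rho$ to $K$, its twist by $\overline\theta$ is automorphic, and cyclic automorphic induction along $K/F$ produces an automorphic representation of $\GL_4(\bbA_F)$ with residual representation $\overline R$. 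Hence the $\GL_4$ automorphy lifting theorem of this section applies to $R$ in this inadequate situation and shows $R$ is automorphic; descending along $K/F$ (Arthur--Clozel) and untwisting by $\theta$ shows $\rho|_{G_K}$ is automorphic over $K$, and since $\rho|_{G_K}$ is $\Gal(K/F)$-invariant a further descent (twisting by the quadratic character $\eta_{K/F}$ if necessary) shows $\rho \cong r_\iota(\pi_1)$ for a cuspidal automorphic representation $\pi_1$ of $\GL_2(\bbA_F)$ of weight $2$, which is $\iota$-ordinary and ramified only at $5 \cup \Sigma$, with $\pi_{1,v}$ an unramified twist of Steinberg for $v \in \Sigma$.

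Finally, since the places of $\Sigma$ may be chosen (Lemma \ref{special1}, Proposition \ref{auxiliary}) with $q_v \equiv 1 \bmod 5$, $q_v \not\equiv 1 \bmod 25$, and $\overline\rho(\Frob_v)$ unipotent of order $5$, I would strip off the ramification at $\Sigma$ one prime at a time using the level-lowering mod $5^N$ results of \S\ref{lowering} (Theorem \ref{thm_level_lowering_mod_p_to_the_N}, Corollary \ref{approximation}) together with the $R=\bbT$ theorem of \S\ref{sec_r_equals_t} (Theorem \ref{cor_r_equals_t}) applied with $\pi_1$ at the auxiliary level as base point, at the cost of a bounded loss of $5$-adic precision; in the limit this produces an everywhere unramified, $\iota$-ordinary cuspidal automorphic representation $\pi$ of $\GL_2(\bbA_{F'})$ of weight $2$, over a soluble totally real $F'/F$, with $\overline{r_\iota(\pi)} \cong \overline\rho|_{G_{F'}}$. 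The hard part will be the $\GL_4$ step: the residual representation $\overline R$ restricted to the index-two subgroup $G_K$ is a twist of the exceptional $\overline\rho|_{G_K}$, so the ordinary Taylor--Wiles method for $\GL_4$ fails in exactly the way it fails for $\overline\rho$ itself, and the "Lie class" analysis and St(uni)-prime construction must be transported to the $\GL_4$/unitary-group setting (this is the content of the automorphy lifting theorem proved earlier in \S\ref{ordinary}); keeping every auxiliary extension soluble and totally real, and arranging the conjugate-self-duality and local hypotheses required to invoke that theorem, is the attendant bookkeeping.
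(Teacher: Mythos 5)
Your overall frame (tensor $\rho$ with an induced character, run the inadequate-case $\GL_4$ lifting theorem, descend along the CM extension and untwist) is the right one, and you correctly see that the Lie-class/St(uni) analysis must be transported to the unitary-group setting. But there is a genuine gap at your first substantive step: you propose to construct the ordinary lift $\rho$ of $\overline{\rho}$ \emph{before} invoking the $\GL_4$ theorem, by imposing a St(uni) condition, killing the remaining dual Selmer group with Taylor--Wiles primes, and concluding that ``the resulting global deformation ring then admits a point valued in the ring of integers of a finite extension of $\Q_5$.'' Killing dual Selmer by adjoining Taylor--Wiles primes does not produce an $\cO$-point of the deformation ring: the local condition $\cD_v^\square$ at such a prime is unbalanced, so by Wiles's formula these primes enlarge the Selmer group by exactly as much as they shrink the dual Selmer group, and the output is only a presentation of $R^T_{\cS_{Q}}$ in a controlled number of variables---useful for patching against spaces of automorphic forms, useless for producing points in their absence. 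To extract a $\overline{\bbQ}_5$-point one needs $R_{\cS_{\overline{\rho}}}$ to be finite over $\cO$ (together with the Euler-characteristic bound $\dim R_{\cS_{\overline{\rho}}} \geq 1$), and the paper obtains this finiteness precisely from the 4-dimensional theorem: $\rho^u \mapsto (\rho^u \otimes \sigma')|_{G_E} \otimes \alpha$ induces a finite morphism $R_{\cS'}^\text{univ} \to R_{\cS_{\overline{\rho}}}$, and Theorem \ref{thm_automorphy_lifting_in_dimension_4} makes $R_{\cS'}^\text{univ}$ finite over $\cO$. In other words, the existence of the ordinary lift is an \emph{output} of the Barnet--Lamb tensor trick, not an input to it; your proposal reverses the logical order of \cite{Bar12}.

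A second, related divergence: in the paper the auxiliary ``unipotent Frobenius'' places $v_1, \dots, v_s$ carry ramification only in the auxiliary factor $\overline{\sigma} = \Ind_{G_K}^{G_F} \overline{\theta}$, which is supercuspidal there; the 2-dimensional deformation problem $\cS_{\overline{\rho}}$ has conditions only at $S_p$, so the resulting $\rho$ is unramified outside $p$, and the only remaining step is Hida theory plus a soluble base change to pass from Hodge--Tate weights $\{0,2\}$ to weight $2$. No level-lowering mod $5^N$ is needed. Your plan of building St(uni) ramification into $\rho$ itself and then stripping it off with Theorem \ref{thm_level_lowering_mod_p_to_the_N} and Theorem \ref{cor_r_equals_t} imports a heavy machine that the paper reserves for the main theorem, and in any case it cannot repair the construction of the lift, which is where the real gap lies. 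The true role of the unipotent places in \S \ref{ordinary} is to make available the local condition of Proposition \ref{prop_steinberg_tensor_cuspidal_liftings} (Steinberg tensor supercuspidal), which is what kills the Lie classes in the 4-dimensional dual Selmer group in the proof of Theorem \ref{thm_automorphy_lifting_in_dimension_4}.
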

The key point is that the automorphic representation $\pi$ can be chosen to be $\iota$-ordinary. To prove Theorem \ref{thm_existence_of_ordinary_lift_of_exceptional_representation}, we will follow the strategy of \cite{Bar12}, namely taking a tensor product of $\overline{\rho}$ with another 2-dimensional representation and using an automorphy lifting theorem in 4 dimensions. The main difference compared to \emph{op. cit.} is that we are working here in the exceptional (viz. `non-adequate') case. We therefore require a 4-dimensional automorphy lifting result that works in this situation, and some extra work is required to establish this, which parallels what we do for $\GL_2$ in the rest of this paper. We first prove this automorphy lifting result in \S \ref{sec_automorphy_in_4_dimensions}. We then prove a version of Theorem \ref{thm_existence_of_ordinary_lift_of_exceptional_representation} under supplementary hypotheses, in \S \ref{sec_ordinary_lifts_under_additional_conditions}. We deduce the general version of the theorem in \S \ref{sec_ordinary_lifts_in_general}.

\subsection{An automorphy lifting theorem in 4 dimensions}\label{sec_automorphy_in_4_dimensions}
We adopt the deformation-theoretic notation of the papers \cite{Clo08} and \cite{ThorneB}. In particular, we make use of the group $\cG_n$, defined on \cite[p. 8]{Clo08}, which is used to describe the deformation theory of conjugate self-dual Galois representations.

Let $L$ be a coefficient field with residue field $k$. Let $E/F$ be an everywhere unramified quadratic imaginary extension of a totally real field, and let $c \in \Gal(E/F)$ denote the non-trivial element. We suppose that each $p$-adic place $v$ of $F$ splits in $E$, and write $\wv$ for a choice of place of $E$ above $v$. Suppose given continuous representations $\overline{\rho}, \overline{\sigma} : G_E \to \GL_2(k)$ satisfying the following conditions:
\begin{enumerate}
\item The tensor product $\overline{\tau} = \overline{\rho} \otimes \overline{\sigma}$ is absolutely irreducible and satisfies $\overline{\tau}^c \cong \overline{\tau}^\vee \epsilon^{-3}$. The extensions of $E$ cut out by $\overline{\rho}$ and $\overline{\sigma}$ are linearly disjoint. The field $k$ contains the eigenvalues of all elements in the image of $\overline{\tau}$.
\item The representation $\overline{\rho}$ has projective image $\PGL_2(\bbF_5)$, and the quadratic extension cut out by the quotient $\PGL_2(\bbF_5) / \PSL_2(\bbF_5)$ is $E(\zeta_5)$. (In particular, $\zeta_5 \not\in E$.)
\item The representation $\overline{\sigma}$ has image of order prime to $p$.
\item There exist places $w_1, \dots, w_s$ of $E$, split over $F$, and satisfying the following conditions:
\begin{enumerate}
\item For each $i = 1, \dots, s$, we have $q_{w_i} \equiv 1 \text{ mod }p$.
\item For each $i = 1, \dots, s$, the representation $\overline{\rho}|_{G_{E_{w_i}}}$ is unramified and $\overline{\rho}(\Frob_{w_i})$ is unipotent of order $p$. The representations $\overline{\sigma}|_{G_{E_{w_i}}}$ are absolutely irreducible.
\item Writing $v_i$ for the places of $F$ below $w_i$, we have that the places $v_i$ are pairwise distinct and $s$ is \emph{even}.
\end{enumerate}
\item The representations $\overline{\rho}$ and $\overline{\sigma}$ are unramified at all finite places $w \nmid v_1 \dots v_s$.
\item There exists a RACSDC automorphic representation $\Pi$ of $\GL_4(\bbA_E)$ satisfying the following conditions:
\begin{enumerate}
\item For each finite place $w\nmid v_1 \dots v_s$ of $E$, $\Pi_w$ is unramified. For each $i = 1, \dots, s$, there exists a supercuspidal representation $\mu_i$ of $\GL_2(E_{w_i})$ and an isomorphism $\Pi_{w_i} \cong \text{Sp}_2(\mu_i)$, where $\text{Sp}_n$ is as defined on \cite[p. 32]{Tay01}. If $\omega_{\mu_i}$ denotes the central character of $\mu_i$, then $\mu_i(\cO_{E_w}^\times)$ has order prime to $p$.
\item $\Pi$ has weight 0 (in the sense of \cite[\S 4.2]{Clo08}) and there is an isomorphism $\overline{r_\iota(\Pi)} \cong \overline{\tau}$.
\end{enumerate}
\end{enumerate}
We fix an extension of $\overline{\tau}$ to a continuous homomorphism $\overline{\tau} : G_F \to \cG_4(k)$ with $\nu \circ \overline{\tau} = \epsilon^{-3}$ (which exists, by \cite[Lemma 2.1.4]{Clo08}) and define two global deformation problems (as on \cite[p. 27]{Clo08}) in 4 dimensions relative to the representation $\overline{\tau}$:
\begin{equation}\label{eqn_deformation_problem_s} \cS = (E/F, S, \widetilde{S}, \cO, \overline{\tau}, \epsilon^{-3}, \{ \cD_v \}_{v \in S}), 
\end{equation}
\begin{equation}\label{eqn_deformation_problem_sprime} \cS' = (E/F, S, \widetilde{S}, \cO, \overline{\tau}, \epsilon^{-3}, \{ \cD'_v \}_{v \in S}). 
\end{equation}
Here $S = \{ v | p \} \cup \{ v_1, \dots, v_s \}$, and $\widetilde{S} = \{ \wv | p \} \cup \{ w_1, \dots, w_s \}$. For each $v \in S_p$, $\cD_v$ is the local deformation problem defined by the weight 0 crystalline lifting ring $R^\text{crys}$ of $\overline{\tau}|_{G_{E_\wv}}$ (which is non-zero because of the existence of $r_\iota(\Pi)|_{G_{E_\wv}}$), while $\cD'_v$ is the local deformation problem defined by the (unique) irreducible component of $\Spec R^\text{crys}$ containing the point corresponding to $r_\iota(\Pi)|_{G_{E_\wv}}$. (For the existence and properties of these deformation rings, see \cite[p. 863]{ThorneB}.) For $v \in \{ v_1, \dots, v_s \}$, $\cD_v = \cD'_v$ is the local deformation problem whose existence is asserted by Proposition \ref{prop_steinberg_tensor_cuspidal_liftings}:
\begin{proposition}\label{prop_steinberg_tensor_cuspidal_liftings}
Let $i \in \{ 1, \dots, s \}$ and write $v = v_i$, $w = w_i$. Let $\sigma : G_{E_{w}} \to \GL_2(\cO)$ denote a lifting of $\overline{\sigma}|_{G_{E_{w}}}$ to $\GL_2(\cO)$ such that $\sigma(I_{E_w})$ has order prime to $p$. It exists because the representation $\overline{\sigma}|_{G_{E_{w}}}$ is absolutely irreducible and has image of order prime to $p$. If $R \in \CNL_\cO$, then we write $\cD_v(R)$ for the functor of liftings of $\overline{\tau}|_{G_{E_w}}$ conjugate to one of the form $\rho \otimes \sigma$, where $\rho : G_{E_w} \to \GL_2(R)$ is a unipotently ramified lifting of $\overline{\rho}|_{G_{E_w}}$ such that for any Frobenius lift $\phi$, we have $(\tr \rho(\phi))^2 = (q+1)^2/q \det \rho(\phi)$.

Then $\cD_v$ is indeed a local deformation problem, and is formally smooth over $\cO$.
\end{proposition}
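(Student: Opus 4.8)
The plan is to build the deformation problem $\cD_v$ by hand, using the product structure $\overline{\tau}|_{G_{E_w}} = \overline{\rho}|_{G_{E_w}} \otimes \overline{\sigma}|_{G_{E_w}}$ together with the linear disjointness of the two factors. First I would make precise the claim that a lift of $\overline{\tau}|_{G_{E_w}}$ of the form $\rho \otimes \sigma$ determines $\rho$ uniquely up to twist. Since $\overline{\sigma}|_{G_{E_w}}$ is absolutely irreducible of order prime to $p$, its lift $\sigma$ to $\GL_2(\cO)$ with $\sigma(I_{E_w})$ of order prime to $p$ is rigid; and since $\overline{\rho}$ and $\overline{\sigma}$ cut out linearly disjoint extensions, the residual tensor product is itself absolutely irreducible, so by Schur's lemma a decomposition $\tau \cong \rho_1 \otimes \sigma_1 \cong \rho_2 \otimes \sigma_2$ forces $\rho_1 \cong \rho_2 \otimes \chi$, $\sigma_1 \cong \sigma_2 \otimes \chi^{-1}$ for a character $\chi$ reducing to the trivial character. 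Fixing the determinant of $\rho$ (equivalently, imposing that $\det \tau$ has the prescribed form $\epsilon^{-3}$, which is part of the ambient deformation problem), this residual-trivial twist $\chi$ is forced to be trivial once we have pinned down $\det \rho$; so after fixing $\sigma$ as in the statement, the assignment $(\rho \otimes \sigma) \mapsto \rho$ gives a well-defined bijection between $\cD_v(R)$ and the set of unipotently ramified lifts $\rho$ of $\overline{\rho}|_{G_{E_w}}$ with the prescribed determinant satisfying the Steinberg eigenvalue relation $(\tr\rho(\phi))^2 = (q+1)^2/q \cdot \det\rho(\phi)$. (One must be slightly careful about whether $\det\rho$ is determined; if not, one adds the fixed-determinant condition to the definition of $\cD_v$, matching the conventions for $\cG_4$-valued deformations with fixed $\nu$.)

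Next I would check that $\cD_v$ is a local deformation problem in the sense of the earlier definition: it is clearly closed under strict equivalence by $\ker(\GL_4(R) \to \GL_4(k))$ since conjugating $\rho \otimes \sigma$ by such an element corresponds (using the uniqueness just established, and that $\overline{\rho}, \overline{\sigma}$ generate the endomorphism algebra) to conjugating $\rho$ by an element of $\ker(\GL_2(R) \to \GL_2(k))$, and the Steinberg relation is conjugation-invariant; and representability follows because $\cD_v$ is a closed subfunctor of $\cD_v^\square$ cut out by the closed (determinant, trace-relation) conditions. The reduction to a $\GL_2$-problem for $\rho$ means that formal smoothness of $\cD_v$ will follow from formal smoothness of the corresponding $\GL_2$ deformation problem for $\overline{\rho}|_{G_{E_w}}$: namely the functor of unipotently ramified lifts of a $\overline{\rho}|_{G_{E_w}}$ with $\overline{\rho}(\Frob_w)$ unipotent of order $p$, $q_w \equiv 1 \bmod p$, of fixed determinant, satisfying the eigenvalue relation. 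This is exactly (a close cousin of) the deformation problem $\cD_v^{\St(\text{uni})}$ analyzed in Proposition \ref{steinberg}, whose representing ring is formally smooth over $\cO$ of dimension $4$; here the eigenvalue relation $(\tr)^2 = (q+1)^2/q \cdot \det$ is the condition that forces the Steinberg (rather than unramified) shape, and the key computational input is the rigidity statement E3 of \cite[\S 1]{Tay} already invoked in the proof of Proposition \ref{steinberg}: any $B$ reducing to the identity that conjugates an upper-triangular matrix reducing to a nontrivial unipotent back into upper-triangular form is itself upper-triangular.

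Finally I would transport formal smoothness from the $\GL_2$-side to the $\GL_4$-side. The point is that the tensor-with-fixed-$\sigma$ map $R^{\GL_2\text{-problem}}_{\overline{\rho}} \to R_{\cD_v}$ is an isomorphism of $\CNL_\cO$-algebras: both represent the same functor, by the bijection on $R$-points established in the first step, naturally in $R$. Hence $R_{\cD_v}$ is formally smooth over $\cO$ of the same dimension, $4$. The main obstacle I expect is not any single deep input but the bookkeeping in the first step: verifying cleanly that the tensor decomposition $\tau \cong \rho \otimes \sigma$ is rigid enough that $\rho$ is canonically recovered — this requires combining Schur's lemma for the absolutely irreducible residual tensor product, the rigidity of the prime-to-$p$ representation $\overline{\sigma}|_{G_{E_w}}$ and its chosen lift $\sigma$, and the linear disjointness hypothesis, and being careful about the exact role of the determinant/$\nu$ normalization in the $\cG_4$-formalism so that no residual-trivial twist ambiguity survives. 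Once that functorial identification is in place, everything else reduces to the already-established Proposition \ref{steinberg} and standard representability arguments.
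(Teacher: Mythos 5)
Your proposal is correct and follows essentially the same route as the paper: the paper simply cites \cite[Corollary 2.4.13]{Clo08} for the fact that (because $\overline{\sigma}|_{G_{E_w}}$ is absolutely irreducible with image of order prime to $p$ and $\overline{\tau}|_{G_{E_w}} = \overline{\rho}|_{G_{E_w}} \otimes \overline{\sigma}|_{G_{E_w}}$) deforming $\overline{\tau}|_{G_{E_w}}$ is equivalent to deforming $\overline{\rho}|_{G_{E_w}}$, which is exactly the rigidity-of-the-tensor-decomposition argument you spell out by hand, and then reduces everything to Proposition \ref{steinberg}, using that $\overline{\rho}(\Frob_w)$ is unipotent of order $p$.
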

\begin{proof}
It follows from \cite[Corollary 2.4.13]{Clo08} that deforming $\overline{\tau}|_{G_{E_w}}$ is indeed equivalent to deforming $\overline{\rho}|_{G_{E_w}}$. The proof of the proposition therefore reduces to the proof of Proposition \ref{steinberg}. We are using here the fact that $\overline{\rho}(\Frob_w)$ has order $p$ (i.e.\ is unipotent and non-trivial). 
\end{proof}
The universal deformation rings $R_\cS^\text{univ}, R_{\cS'}^\text{univ} \in \CNL_\cO$ are defined, and there is a natural surjective homomorphism $R_{\cS}^\text{univ} \to R_{\cS'}^\text{univ}$ (corresponding to the fact that for each $v \in S$, $\cD'_v$ is a subfunctor of $\cD_v$).
\begin{theorem}\label{thm_automorphy_lifting_in_dimension_4}
The $\cO$-algebra $R_{\cS'}^\text{univ}$ is a finite $\cO$-module. If $\tau : G_F \to \cG_n(\overline{\bbQ}_p)$ is a representation corresponding to a homomorphism $R_{\cS'}^\text{univ} \to \overline{\bbQ}_p$, then there exists a RACSDC automorphic representation $\Pi'$ of $\GL_4(\bbA_E)$ such that $\tau|_{G_E} \cong r_\iota(\Pi')$. 
\end{theorem}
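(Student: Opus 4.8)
The plan is to adapt the Taylor--Wiles--Kisin patching method to this inadequate, conjugate self-dual $\GL_4$ situation, following the structure of \cite{ThorneB} but importing the exceptional-case techniques from \S\ref{sec_galois_theory}. First I would observe that the natural map $R_{\cS}^\text{univ}\to R_{\cS'}^\text{univ}$ reduces the problem to studying $\cS'$, and that by soluble base change (and the fact, from Lemma \ref{bc}, that the exceptional property is preserved under totally real and quadratic base change) it suffices to prove the theorem after replacing $F$ and $E$ by a suitable soluble extension, so we may assume as many technical hypotheses as we need — in particular that residue fields are large, that $\overline{\tau}$ is unramified outside the $v_i$, and that the relevant local lifting rings are as well-behaved as possible. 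The existence of $\Pi$ provides an automorphic point, so the patched module $M_\infty$ over the patched deformation ring $R_\infty$ is non-zero; the game is to show $\mathrm{Supp}_{R_\infty}M_\infty$ is all of $\mathrm{Spec}R_\infty$, so that every $\overline{\bbQ}_p$-point of $R_{\cS'}^\text{univ}$ is automorphic.

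The key steps, in order, are: (1) Set up the deformation theory with the group $\cG_4$ exactly as in \cite{Clo08}, noting via \cite[Corollary 2.4.13]{Clo08} that deforming $\overline{\tau}|_{G_{E_w}}=\overline{\rho}|_{G_{E_w}}\otimes\overline{\sigma}|_{G_{E_w}}$ at $w=w_i$ is equivalent to deforming $\overline{\rho}|_{G_{E_w}}$, so that the local picture at those places is controlled by Proposition \ref{prop_steinberg_tensor_cuspidal_liftings} and is formally smooth. (2) Analyze the dual Selmer group $H^1_{\cS',T}(\Ad^0\overline{\tau}(1))$: here the point is that $\overline{\sigma}$ has image of prime-to-$p$ order and is linearly disjoint from $\overline{\rho}$, so $H^1(\mathrm{image},\mathrm{ad}^0\overline{\tau})$ decomposes and the only obstruction to adequacy comes from the Lie classes attached to $\overline{\rho}$, exactly as in \S\ref{novel}. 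The presence of at least one (in fact all $s$, with $s$ even) place $w_i$ of type $\St(\mathrm{uni})$ kills these Lie classes in dual Selmer by the analogue of Proposition \ref{steinberg}/Proposition \ref{TW}: the Lie subspace maps nontrivially to $H^1(F_{v_i},\cdot)$ but lands outside $\cL_{v_i}^\perp$. (3) Having arranged $H^1_{\cS',T}(\Ad^0\overline{\tau}(1))=0$ after adding a Taylor--Wiles set $Q$ with $q_v\equiv1\bmod p^N$, run the usual patching: auxiliary primes exist by Cebotarev together with Lemma \ref{Z5} (applied to $\overline{\rho}$), which guarantees the projective image of the $p$-adic representation is large enough to separate the cyclotomic character from the projective image and hence to find the required Frobenius conjugacy classes. (4) Match with the automorphic side — the spaces of algebraic modular forms on the relevant unitary group in 4 variables, with $\mathrm{Sp}_2(\mu_i)$-type level at the $v_i$ — using the local-global compatibility and a Steinberg-type level-raising/lowering argument at the $w_i$ paralleling \S\ref{raising} (here the condition that $\overline{\rho}(\Frob_{w_i})$ is unipotent of order $p$ is exactly what makes the module-theoretic bookkeeping go through). (5) Conclude that $M_\infty$ is supported everywhere, hence $R_{\cS'}^\text{univ}$ is a finite $\cO$-module and every $\overline{\bbQ}_p$-point $\tau$ arises from some RACSDC $\Pi'$ with $\tau|_{G_E}\cong r_\iota(\Pi')$.

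The main obstacle I expect is the same one that motivates the whole paper: the inadequacy of $\overline{\tau}$ means the standard Taylor--Wiles primes cannot kill the Lie classes in dual Selmer, so the argument genuinely depends on having the $\St(\mathrm{uni})$ local condition at the places $v_i$ available and on Lemma \ref{Z5} to produce auxiliary primes of the right kind. Technically, the delicate point will be the local deformation theory and level-raising at the places $w_i$ where $q_{w_i}\equiv1\bmod p$ and $\overline{\rho}(\Frob_{w_i})$ is unipotent of order $p$ — this is precisely the case that required extra work in \S\ref{sec_shimura_curves_and_hida_varieties}, and the 4-dimensional analogue (tensoring the Steinberg-type $\GL_2$ deformation with the fixed prime-to-$p$ lift $\sigma$ of $\overline{\sigma}$) must be set up carefully so that the patched module sees a free rank-one piece at each such place, controlling the error terms in the analogue of Theorem \ref{thm_level_lowering_mod_p_to_the_N}. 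Everything else is a faithful transcription of the arguments of \cite{ThorneB} and \cite{Bar12}, combined with the exceptional-case input of the present paper.
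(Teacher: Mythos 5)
Your overall strategy is the paper's: patch on a definite unitary group attached to a division algebra ramified exactly at $S(B)=\{v_1,\dots,v_s\}$, and handle the failure of adequacy by observing that the Lie classes of $H^1(L/F,\ad\overline{\tau}(1))$ restrict injectively to $H^1(G_{E_{w_1}},\ad\overline{\tau}(1))$ (the image of $G_{E_{w_1}}$ being a $p$-Sylow of $\Gal(L/F)$ precisely because $\overline{\rho}(\Frob_{w_1})$ is unipotent of order $p$) and land outside $\cL_{v_1}^\perp$, so that every dual Selmer class survives restriction to $G_L$ and the usual Taylor--Wiles selection applies. That is exactly the content of the Claim in the paper's proof, and you have identified it correctly. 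Two points where your execution diverges from (and is heavier than) what is actually needed. First, no level-raising or mod-$p^N$ level-lowering at the $w_i$ occurs in the $\GL_4$ argument: hypothesis 6(a) already places $\Pi_{w_i}\cong\mathrm{Sp}_2(\mu_i)$, so $\Pi$ transfers directly by Jacquet--Langlands to the division algebra and contributes a maximal ideal of the Hecke algebra at the fixed level; the delicate bookkeeping of \S\ref{raising} and Theorem \ref{thm_level_lowering_mod_p_to_the_N} is a purely 2-dimensional device used later, in \S\ref{sec_main_theorem}, and importing it here would be both unnecessary and genuinely hard. Second, the Taylor--Wiles primes here are produced by a purely residual argument, not by Lemma \ref{Z5} (which concerns the $p$-adic image and is used elsewhere to find the $\St(\mathrm{uni})$-type places, which in Theorem \ref{thm_automorphy_lifting_in_dimension_4} are simply hypotheses): one needs the observation that $\ad\overline{\tau}(G_E)$ has no quotient of order $p$, the vanishing of the intersection of $H^1(L/F,\ad\overline{\tau}(1))$ with the dual Selmer group, and an adequacy-for-tensor-products statement (via \cite[Lemma A.3.1]{Gee13}, using that $\overline{\sigma}$ has prime-to-$p$ image and cuts out an extension linearly disjoint from that of $\overline{\rho}$) to produce the element $\sigma_0\in G_{E(\zeta_{p^N})}$ and the eigenvalue projector $e_{\sigma_0,\alpha_0}$ with $\tr e_{\sigma_0,\alpha_0}\phi(\sigma_0)\neq 0$. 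Your soluble base change reduction at the outset is also absent from the paper (the hypotheses are arranged before the theorem is invoked, in \S\ref{sec_ordinary_lifts_under_additional_conditions} and \S\ref{sec_ordinary_lifts_in_general}), and one should be careful that finiteness of the universal ring and automorphy of its points do descend; but this is a minor framing issue rather than a gap.
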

\begin{proof}
The proof is an application of the Taylor--Wiles argument, as found for example in \cite{ThorneB} and especially the proof of \cite[Theorem 6.8]{ThorneB}. There are two modifications that we must account for: first, we must carefully choose a unitary group and level structure so that the relevant space of automorphic forms gives rise to Galois representations of type $\cS$. Second, we must construct sufficiently many sets of Taylor--Wiles primes under a weakening of the `adequacy' hypothesis appearing in \emph{op. cit.}. We now describe the proof, omitting details when no novel argument is required. 

In defining our unitary groups and automorphic forms, we follow \cite[\S 3.3]{Clo08}. In the notation there, we take $S(B) = \{ v_1, \dots, v_s \}$; we can then find a unitary group $G$ over $F$ associated to a pair $(B, \dagger)$, where:
\begin{enumerate}
\item $B$ is a central division algebra over $E$ of degree 4 which is split outside $S(B)$. For each place $w$ of $E$ lying above a place of $S(B)$, $B_w$ is a division algebra. 
\item $\dagger$ is an involution of $B$ of the second kind. 
\item Writing the functor of points of $G$ as $G(R) = \{ g \in (B \otimes_F R)^\times \mid g g^{\dagger \otimes 1} = 1 \}$, we have that $G(F \otimes_\bbQ \bbR)$ is compact, and for each finite place $v\not\in S(B)$ of $F$, $G_{F_v}$ is quasi-split. 
\end{enumerate}
(The existence of such a $G$ uses that $s = \# S(B)$ is even.) We set $\widetilde{S(B)} = \{ w_1, \dots, w_s \}$. As on \cite[p. 96]{Clo08}, we can find an integral model $\underline{G}$ of $G$ over $\cO_F$ and for each place $w$ of $E$ split over $F$ an isomorphism $\iota_w : G(F_v) \cong \GL_n(F_w)$ which takes $\underline{G}(\cO_{F_v})$ to $\GL_n(\cO_{E_w})$. If $v \in S(B)$ lies below $w \in \widetilde{S(B)}$, then we write $\rho_v : G(F_v) \to \GL_n(\cO)$ for a smooth representation of $G(F_v)$ such that $\mathrm{JL}( (\rho_v \otimes_\cO \overline{\bbQ}_p) \circ \iota_w^{-1}) \cong \iota^{-1}\Pi_w^\vee$. Thus $\rho_v$ is not uniquely determined (because there may be more than one choice of integral lattice), but $\rho_v \otimes_\cO \overline{\bbQ}_p$ is uniquely determined up to isomorphism. 

If $U = \prod_v U_v \subset G(\bbA_F^\infty)$ is an open compact subgroup, and $A$ is an $\cO$-module, then we will write $S(U, A)$ for the set of functions 
\[ f : G(F) \backslash G(\bbA_F^\infty) \to (\otimes_{v \in S(B)} \rho_v) \otimes_\cO A \]
such that for all $g \in G(\bbA_F^\infty)$, $u \in U$, we have $f(gu) = u_{S(B)}^{-1} f(g)$ (where $u_{S(B)}$ denotes the projection of $u$ to the $S(B)$ components). (This is the space denoted $S_{a, \{ \rho_v \}, \{ \chi_v \}}(U, A)$ in \cite{Clo08}; since we take $R = \emptyset$ and $a = 0$, and leave $\rho_v$ fixed throughout the proof, we omit this extra data from the notation.) 

If $T$ is a finite set of finite places of $F$, containing $S$, such that $U_v = \iota_w^{-1} \GL_n(\cO_{F_w})$ for all finite places of $E$ split over $F$ and prime to $T$, then (following \cite[p. 104]{Clo08}) we write $\bbT^T(U)$ for the commutative $\cO$-subalgebra of $\End_\cO(S(U, \cO))$ generated by unramified Hecke operators at finite places of $E$ split over $F$ and prime to $T$. 

We now pin down a particular choice of $U$ and $T$. By the Cebotarev density theorem, we can find a place $\widetilde{a}$ of $E$ prime to $p$, split over $F$ and absolutely unramified, such that $\ad \overline{\sigma}(\Frob_{\widetilde{a}}) = 1$ and $\ad \overline{\rho}(\Frob_{\widetilde{a}}) = \diag(1, i)$ in $\PGL_2(\bbF_5)$, where $i^2 = -1$. Moreover, we can assume that the residue characteristic of $\widetilde{a}$ is strictly greater than 3. Then $\widetilde{a}$ does not split in $E(\zeta_5)$, and consequently we have $q_{\widetilde{a}} \equiv -1 \text{ mod }5$. It follows that $h^2(G_{E_{\widetilde{a}}}, \ad \overline{\tau}) = h^0(G_{E_{\widetilde{a}}}, \ad \overline{\tau}(1)) = 0$, and the only deformations of $\overline{\tau}|_{G_{E_{\widetilde{a}}}}$ are the unramified ones. We take $a$ to be the place of $F$ below $\widetilde{a}$, and define $U = \prod_v U_v$, where:
\begin{itemize}
\item If $v$ is a place of $F$ inert in $E$, then $U_v$ is a hyperspecial maximal compact subgroup of $G(F_v)$.
\item If $v | p$, then $U_v = \underline{G}(\cO_{F_v})$.
\item If $v = v_1, \dots, v_s$, then $U_v$ is the unique maximal compact subgroup of $G(F_v)$ (i.e.\ the group of units in the ring of integers of the division algebra $B_{w}$).
\item If $v = a$, then $U_v = \iota_{\widetilde{a}}^{-1}( \ker \GL_n(\cO_{E_{\widetilde{a}}}) \to \GL_n(k({\widetilde{a}})) )$ (i.e.\ the inverse image under $\iota_{\widetilde{a}}$ of the principal congruence subgroup of $\GL_n(\cO_{E_{\widetilde{a}}})$). (Then $U_v$ is torsion-free, because $a$ is absolutely unramified and has residue characteristic $> 3$.)
\item If $v$ is any other place of $F$ split in $E$, then we take $U_v$ to be a maximal compact subgroup of $G(F_v)$.
\end{itemize}
Then the open compact subgroup $U \subset G(\bbA_F^\infty)$ is sufficiently small, in the sense of \cite[p. 98]{Clo08}, because of the choice of place $a$.
We set $T = S \cup \{ a \}$. Thus the Hecke algebra $\bbT^T(U)$ is now defined unambiguously.
The existence of the RACSDC automorphic representation $\Pi$ determines (by \cite[Proposition 3.3.2]{Clo08}) a homomorphism $\bbT^T(U) \to \overline{\bbQ}_p$ such that the unramified Hecke operator $\iota_w^{-1} T_w^i$ is mapped to the eigenvalue of $T_w^i$ on the 1-dimensional $\overline{\bbQ}_p$-vector space $\iota^{-1} \Pi_w^{\GL_n(\cO_{F_w})}$. The image of this homomorphism $\bbT^T(U) \to \overline{\bbQ}_p$ is contained in $\overline{\bbZ}_p$, and we write $\ffrm \subset \bbT^T(U)$ for the maximal ideal obtained by reduction modulo $p$. After possibly enlarging the coefficient field $L$, we can assume that $\bbT^T(U)_\ffrm = k$. By construction, the ideal $\ffrm$ is non-Eisenstein, in the sense of \cite[Definition 3.4.3]{Clo08}, and hence there is a lift of $\overline{\tau}$ to a  homomorphism 
\[ r_\ffrm : G_F \to \cG_n(\bbT^T(U)_\ffrm) \]
satisfying the list of conditions in \cite[Proposition 3.4.4]{Clo08}, with the exception that condition 6 (i.e.\ that $r_\ffrm$ is `Fontaine--Laffaille' at the primes above $p$) is replaced by the condition that $r_\ffrm$ is crystalline of weight 0, i.e.\ of type $\cD_v$ for each $v \in S_p$. In particular, these conditions imply the existence of a canonical surjection $R_\cS^\text{univ} \to \bbT^T(U)_\ffrm$, realizing $r_\ffrm$ as the pushforward of the universal deformation of $\overline{\tau}$. 

We will show that the closed subspace
\[ \Supp_{R_\cS^\text{univ}} S(U, \cO)_\ffrm \subset \Spec R_\cS^\text{univ} \]
contains $\Spec R_{\cS'}^\text{univ}$. This will imply the theorem. Indeed, the ring $\bbT^T(U)_\ffrm$ is a finite $\cO$-module, and the assertion on supports implies that the underlying reduced quotient of $R_{\cS'}^\text{univ}$ is a quotient of $\bbT^T(U)_\ffrm$. It follows from the completed version of Nakayama's lemma that $R_{\cS'}^\text{univ}$ is then itself finite over $\cO$. If $\varphi : R_{\cS'}^\text{univ} \to \overline{\bbQ}_p$ is a homomorphism then for the same reason it necessarily factors through $\bbT(U, \cO)_\ffrm$, implying (by \cite[Proposition 3.3.2]{Clo08} again) the existence of a RACSDC automorphic representation $\Pi'$ of $\GL_4(\bbA_E)$ such that $\varphi \circ r_\ffrm|_{G_E} \cong r_\iota(\Pi')$.

By a standard application of the Taylor--Wiles argument, as in the proof of \cite[Theorem 6.8]{ThorneB}, it is enough to show the existence of Taylor--Wiles systems, i.e.\ (following the proof of \cite[Proposition 4.4]{ThorneB}) to prove the following claim:
\begin{claim}
Let $S_p \subset S$ denote the set of places dividing $p$, and let $H^1_{\cL^\perp, S_p}(F_S/F, \ad \overline{\tau}(1))$ be the dual Selmer group defined on \cite[p. 27]{Clo08}. Then for every $N \geq 1$ and for every cohomology class 
\[ [\phi] \in H^1_{\cL^\perp, S_p}(F_S/F, \ad \overline{\tau}(1)), \]
 there exists an element $\sigma_0 \in G_{E(\zeta_{p^N})}$ and an eigenvalue $\alpha_0 \in k$ of $\overline{\tau}(\sigma_0)$ such that $\tr e_{\sigma_0, \alpha} \phi(\sigma_0) \neq 0$, where $e_{\sigma_0, \alpha} \in \End_k(\overline{\tau})$ is the unique $\overline{\tau}(\sigma_0)$-equivariant projector onto the $\alpha_0$-generalized eigenspace of $\overline{\tau}(\sigma_0)$. 
\end{claim}
Before establishing the claim, we make the following observations:
\begin{itemize}
\item The projective image $\ad \overline{\tau}(G_E)$ has no quotient of order $p$. (It is a quotient of $\PGL_2(\bbF_5) \times \overline{\sigma}(G_E)$.)
\item Let $L/E(\zeta_{p^N})$ denote the extension cut out by $\ad \overline{\tau}$. Then the intersection $H^1(L/F, \ad \overline{\tau}(1)) \cap H^1_{\cL^\perp, S_p}(F, \ad \overline{\tau}(1))$ (taken inside $H^1(F, \ad \overline{\tau}(1))$) is trivial. (Let $w = w_1$. The group 
\[ H^1(E(\zeta_{p^N})/F, \ad \overline{\tau}(1)^{G_{E(\zeta_{p^N})}}) \]
 is trivial, so the restriction map \[ H^1(L/F, \ad \overline{\tau}(1)) \to H^1(L/E(\zeta_{p^N}), \ad \overline{\tau}(1)) \cong H^1(\PSL_2(\bbF_5), \ad \overline{\rho}) \cong k \]
is injective, implying that $H^1(L/F, \ad \overline{\tau}(1))$ has dimension at most 1. The restriction map 
\[ H^1(L/F, \ad \overline{\tau}(1)) \to H^1(G_{E_w}, \ad \overline{\tau}(1)) \]
is injective, since the image of $G_{E_w}$ in $\Gal(L/F)$ is a $p$-Sylow subgroup. If $\cL_v^\perp \subset H^1(G_{E_w}, \ad \overline{\tau}(1))$ denotes the orthogonal complement of the tangent space $\cD_v(k[\epsilon]) = \cL_v \subset H^1(G_{E_w}, \ad \overline{\tau})$, then $\cL_v^\perp$ does not contain the image of $H^1(L/F, \ad \overline{\tau}(1))$. Therefore the intersection of $H^1(L/F, \ad \overline{\tau}(1))$ with the group 
\[ H^1_{\cL^\perp, S_p}(F, \ad \overline{\tau}(1)) \subset \ker\left[ H^1(F_S/F, \ad \overline{\tau}(1)) \to H^1(G_{E_w}, \ad \overline{\tau}(1)) / \cL_v^\perp \right] \]
is trivial.)
\item For each simple $k[G_{E(\zeta_p)}]$-submodule $W \subset \ad \overline{\tau}(1)$, there exists $\sigma \in G_F$ and an eigenvalue $\alpha \in k$ of $\overline{\tau}(\sigma)$ such that $\tr e_{\sigma, \alpha} W \neq 0$. (This is true for the individual factors $\overline{\rho}$ and $\overline{\sigma}$, so follows for their tensor product by the argument of \cite[Lemma A.3.1]{Gee13}.)
\end{itemize}
To prove the claim, let us therefore fix $N \geq 1$ and a class $[\phi] \in H^1_{\cL^\perp, S_p}(F_S/F, \ad \overline{\tau}(1))$, and consider the short exact sequence
\[ \xymatrix@1{ 0 \ar[r] & H^1(L/F, \ad \overline{\tau}(1)) \ar[r] & H^1(F_S/F, \ad \overline{\tau}(1)) \ar[r]^-{\Res^F_L} & H^1(F_S/L, \ad \overline{\tau}(1))^{G_F}.} \]
The class $[\phi]$ lies in the middle group; by the second point above, its image $f = \Res^F_L [\phi]$ in $H^1(F_S/L, \ad \overline{\tau}(1))^{G_F}$ is non-zero. We can view $f$ as a $G_{E(\zeta_p)}$-equivariant homomorphism $G_L \to \ad \overline{\tau}$. Let $W$ be a simple $k[G_{E(\zeta_p)}]$-submodule of the $k$-span of the image of $f$. By the third point above, we can find an element $\sigma \in G_{E(\zeta_p)}$, together with an eigenvalue $\alpha \in k$ of $\overline{\tau}(\sigma)$ such that $\tr e_{\sigma, \alpha} W \neq 0$. If $\tr e_{\sigma, \alpha} \phi(\sigma) \neq 0$, then we're done: take $\sigma_0 = \sigma$ and $\alpha_0 = \alpha$.

Otherwise, we can assume $\tr e_{\sigma, \alpha} \phi(\sigma) = 0$, in which case we choose $\tau \in G_L$ such that $\tr e_{\sigma, \alpha} f(\tau) \neq 0$. There is an eigenvalue $\alpha_0$ of $\sigma_0 = \tau \sigma$ such that $e_{\sigma, \alpha} = e_{\sigma_0, \alpha_0}$, and we can calculate
\[ \tr e_{\sigma_0, \alpha_0} \phi(\sigma_0) = \tr e_{\sigma, \alpha} (\phi(\tau) + \phi(\sigma)) = \tr e_{\sigma, \alpha} f(\tau) \neq 0. \]
This completes the proof.
\end{proof}

\subsection{Ordinary lifts under additional conditions}\label{sec_ordinary_lifts_under_additional_conditions}

We now prove a version of Theorem \ref{thm_existence_of_ordinary_lift_of_exceptional_representation} under supplementary hypotheses. Let $F$ be a totally real field such that for each $p$-adic place $v$ of $F$, $\bbQ_p(\zeta_p) \subset F_v$, and let $\pi$ be a RAESDC automorphic representation of $\GL_2(\bbA_F)$ of trivial central character and satisfying the following conditions:
\begin{itemize}
\item The representation $\pi$ everywhere unramified and of weight 2. For each place $v|p$ of $F$, $\pi_v$ is not $\iota$-ordinary.
\item The residual representation $\overline{\rho} = \overline{r_\iota(\pi)}$ is exceptional. For each place $v|p$ of $F$, $\overline{\rho}|_{G_{F_v}}$ is trivial.
\end{itemize}
We let $\rho' = r_\iota(\pi)$. Then $\det \rho' = \epsilon^{-1}$ and for each place $v|p$ of $F$, $\rho'|_{G_{F_v}}$ is crystalline non-ordinary. We suppose given as well the following additional data:
\begin{itemize}
\item Places $v_1, \dots, v_s$ of $F$ such that $s$ is even and for each $i = 1, \dots, s$, $q_{v_i} \equiv 1 \text{ mod }p$ and $\overline{\rho}(\Frob_{v_i})$ is unipotent of order $p$.
\item An everywhere unramified imaginary quadratic extension $E/F$, disjoint from $F(\zeta_5)$, in which $v_1, \dots, v_s$ and the $p$-adic places of $F$ all split. We write $w_1, \dots, w_s$ for a choice of places above $v_1, \dots, v_s$, respectively. We suppose there exists an everywhere unramified finite order character $\alpha : G_E \to \cO^\times$ such that $\alpha \alpha^c = \omega^{-1}$, the Teichm\"uller lift of the cyclotomic character. (We note that $\omega = \omega^{-1}$, since by hypothesis $E(\zeta_5)/E$ is a quadratic extension, but we preserve this notation for psychological reasons.)
\item An imaginary quadratic extension $K/F$, disjoint from $E(\zeta_5)$, in which every place $v|p$ of $F$ splits and $v_1, \dots, v_s$ are inert, and a continuous character $\overline{\theta} : G_K \to k^\times$ which is unramified outside $v_1, \dots, v_s$, satisfies $\overline{\theta} \overline{\theta}^c = \epsilon^{-1}$, and such that the restrictions $\overline{\theta}|_{I_{K_{v_i}}}$ have order divisible by $t_i$, for some prime $t_i | q_{v_i} + 1$.
\end{itemize}
We set $\overline{\sigma} = \Ind_{G_K}^{G_F} \overline{\theta}$; then $\overline{\sigma}$ is absolutely irreducible, because the local restrictions $\overline{\sigma}|_{G_{E_{v_i}}}$ are absolutely irreducible, and $\det \overline{\sigma} = \epsilon^{-1}$. Now choose for each place $v$ of $F$ above $p$ a place $v'$ of $K$ above $v$. We finally suppose given as well the following additional data:
\begin{itemize}
\item A continuous character $\theta : G_K \to \cO^\times$ lifting $\overline{\theta}$ which is unramified outside $v_1, \dots, v_s$ and the $p$-adic places and crystalline at the $p$-adic places, satisfies $\theta \theta^c = \epsilon^{-2} \omega$, and such that $\mathrm{HT}_\tau(\theta) = \{ 0 \}$ for each embedding $\tau : K \hookrightarrow \overline{\bbQ}_p$ inducing a place $v'$ of $K$.
\item A continuous character $\theta' : G_K \to \cO^\times$ lifting $\overline{\theta}$ which is unramified outside $v_1, \dots, v_s$ and the $p$-adic places and crystalline at the $p$-adic places, satisfies $\theta' (\theta')^c = \epsilon^{-1}$, and such that $\mathrm{HT}_\tau(\theta') = \{ 0 \}$ or $\{ 1 \}$ for each place $v'$ of $K$; and for each place $v'$ of $K$, there is at least one such embedding $\tau$ with $\mathrm{HT}_\tau(\theta') = \{ 0 \}$, and at least one such embedding with $\mathrm{HT}_\tau(\theta') = \{ 1 \}$.
\end{itemize}
We set $\sigma = \Ind_{G_K}^{G_F} \theta$ and $\sigma' = \Ind_{G_K}^{G_F} \theta'$. Then $\sigma$ is crystalline and ordinary, satisfying $\det \sigma = \epsilon^{-2} \omega$ and $\mathrm{HT}_\tau( \sigma ) = \{ 0, 2 \}$ for each embedding $\tau : E \hookrightarrow \overline{\bbQ}_p$, while $\sigma'$ is crystalline of weight 0 with $\det \sigma' = \epsilon^{-1}$ and for each place $v|p$ of $F$, $\sigma'|_{G_{F_v}}$ is non-ordinary.

We introduce the following deformation problem relative to $F$ (notation now as in the main body of the paper):
\[ \cS_{\overline{\rho}} = (\overline{\rho}, \epsilon^{-1}\omega, S_p, \{ \cO \}_{v \in S_p}, \{ \cD_v \}_{v \in S_p}), \]
where for each $v \in S_p$, $\cD_v$ is the local deformation problem of ordinary crystalline representations of Hodge--Tate weights $\{0, 2 \}$. The local lifting ring of $\cD_v$ is then irreducible, with smooth generic fiber. Let $\overline{\tau} = (\overline{\rho} \otimes \overline{\sigma})|_{G_E} \otimes \alpha$. Then we have
\[ \overline{\tau}^c \cong (\overline{\rho} \otimes \overline{\sigma})|_{G_E} \otimes \omega^{-1} \alpha^{-1} \cong (\overline{\rho} \otimes \overline{\sigma})^\vee|_{G_E} \otimes \epsilon^{-3} \omega \omega^{-1} \alpha^{-1} \cong \overline{\tau}^\vee \epsilon^{-3}. \]
Thus $\overline{\tau}$ is conjugate self-dual, and one sees that it satisfies the hypotheses of \S \ref{sec_automorphy_in_4_dimensions}. We can therefore fix an extension of $\overline{\tau}$ to a homomorphism $\overline{\tau} : G_F \to \cG_4(k)$ with $\nu \circ \overline{\tau} = \epsilon^{-3}$. The deformation problem $\cS$ is then defined (see equation (\ref{eqn_deformation_problem_s})). In order to pin down the subproblem $\cS'$ as in equation (\ref{eqn_deformation_problem_sprime}), it is necessary to specify a RACSDC automorphic representation $\Pi$ of $\GL_4(\bbA_E)$ with $\overline{r_\iota(\Pi)} = \overline{\tau}$, and we take it to be the one with Galois representation $(\rho' \otimes \sigma)|_{G_E} \otimes \alpha$, which exists by automorphic induction. Then the deformation problem $\cS'$ is also defined.

Theorem \ref{thm_automorphy_lifting_in_dimension_4} implies that the ring $R_{\cS'}$ is a finite $\cO$-algebra. Let us write $\rho^u : G_F \to \GL_2(R_{\cS_{\overline{\rho}}})$ for a representative of the universal deformation valued in this ring, and let $\tau^u = (\rho^u \otimes \sigma')|_{G_E} \otimes \alpha$, a 4-dimensional representation with coefficients in $ R_{\cS_{\overline{\rho}}}$. We have $(\tau^u)^c \cong (\tau^u)^\vee \epsilon^{-3}$, and we can therefore find an extension of $\tau^u$ to a homomorphism $\tau^u : G_F \to \cG_4(R_{\cS_{\overline{\rho}}})$. By construction, the representation $\tau^u$ is of type $\cS'$, so is classified by a homomorphism $R_{\cS'}^\text{univ} \to R_{\cS_{\overline{\rho}}}$. Just as in \cite[\S 3.2]{Bar12}, one can show that this is a finite morphism of rings, and hence that $R_{\cS_{\overline{\rho}}}$ is a finite $\cO$-algebra. One also knows by the standard argument in Galois cohomology that $\dim R_{\cS_{\overline{\rho}}} \geq 1$, implying that $R_{\cS_{\overline{\rho}}}$ must have a $\overline{\bbQ}_p$-point, corresponding to a representation $\rho : G_F \to \GL_2(\cO)$ of type $\cS_{\overline{\rho}}$ (after possibly enlarging $\cO$).

In the same way, the representation $(\rho \otimes \sigma')|_{G_E} \otimes \alpha$ gives rise to a representation of type $\cS'$, so Theorem \ref{thm_automorphy_lifting_in_dimension_4} implies that $(\rho \otimes \sigma')|_{G_E} \otimes \alpha$ is automorphic. It then follows from \cite[Proposition 5.1.1]{Bar12} that $\rho|_{G_E}$ itself is automorphic, and so the same is true for $\rho$ itself. By Hida theory, we can pass to a soluble extension $F'/F$ and replace $\rho$ by a representation of weight 2 (i.e.\ with $\tau$-Hodge--Tate weights $\{0, 1 \}$ for every $\tau$) with the desired properties. This proves Theorem \ref{thm_existence_of_ordinary_lift_of_exceptional_representation} under the conditions of this section.
\subsection{Ordinary lifts in general}\label{sec_ordinary_lifts_in_general}
We now complete the proof of Theorem \ref{thm_existence_of_ordinary_lift_of_exceptional_representation}. After making a preliminary soluble base change, we can assume that we are in the following situation (cf. \cite[Lemma 6.1.1]{Bar12}): we are given a totally real field $F$ and a cuspidal automorphic representation $\pi$ of $\GL_2(\bbA_F)$, of weight 2 and trivial central character, satisfying the following additional conditions:
\begin{itemize}
\item The representation $\overline{\rho} = \overline{r_\iota(\pi)}$ is exceptional, and for each place $v | p$ of $F$, $\overline{r_\iota(\pi)}$ is trivial.
\item For each place $v | p$ of $F$, $\pi_v$ is not $\iota$-ordinary and $\bbQ_p(\zeta_p) \subset F_v$.
\end{itemize}
By the Cebotarev density theorem, we can find a place $u_0$ of $F$ such that $q_{u_0} \equiv 1 \text{ mod }p$ and $\overline{\rho}(\Frob_{u_0})$ is unipotent of order $p$. (Observe that the condition $q_{u_0} \equiv 1 \text{ mod }p$ is equivalent to asking that the place $u_0$ split in the unique quadratic extension of $F$ which is contained in the field cut out by $\ad \overline{\rho}$, which has Galois group $\PGL_2(\bbF_5)$, by hypothesis.) Let $K/F$ be an imaginary quadratic extension disjoint from $F(\zeta_5)$ in which each place $v|p$ of $F$ splits and $u_0$ is inert. By \cite[Lemma A.2.5]{BLGGT}, we can find a continuous character $\overline{\theta} : G_K \to k^\times$ such that the order of $\overline{\theta}_{I_{u_0}}$ is divisible by a prime $t | (q_{u_0} + 1)$ and $\overline{\theta} \overline{\theta}^c = \epsilon^{-1}$ and for each place $v | p$ of $K$, $\overline{\theta}|_{I_{K_v}}$ is trivial.

Choose for each place $v|p$ of $F$ a place $v'$ of $K$ above $v$. By another application of \cite[Lemma A.2.5]{BLGGT}, we can find characters $\theta, \theta: G_K \to \cO^\times$ lifting $\overline{\theta}$ and satisfying the following conditions:
\begin{itemize}
\item We have $\theta \theta^c = \epsilon^{-2} \omega$, $\theta$ and for each embedding $\tau : K \hookrightarrow \overline{\bbQ}_p$ inducing a place $v'$, $\mathrm{HT}_\tau(\theta) = \{ 0 \}$.
\item We have $\theta' (\theta')^c = \epsilon^{-1}$ and for each embedding $\tau : K \hookrightarrow \overline{\bbQ}_p$ inducing a place $v'$, $\mathrm{HT}_\tau(\theta') = \{ 0 \}$ or $\{ 1 \}$; and for each $v'$, there exists at least one such embedding with $\mathrm{HT}_\tau(\theta') = \{ 0 \}$  and at least one such embedding with $\mathrm{HT}_\tau(\theta') = 1$.
\end{itemize}
We choose as well an imaginary quadratic extension $E/F$, disjoint from $K(\zeta_5)$ over $F$, in which every prime above $p$ or $u_0$ splits, and a finite order character $\alpha : G_E \to \cO^\times$ such that $\alpha \alpha^c = \omega^{-1}$. We now observe that we can find a soluble extension $F'/F$ of even degree, linearly disjoint over $F$ from $K \cdot E(\zeta_5)$, and satisfying the following conditions:
\begin{itemize}
\item The place $u_0$ splits in $F'$. We write $v_1, \dots, v_s$ for the places of $F$ above $u_0$.
\item Let $K' = F' \cdot K$. Then the characters $\theta|_{G_K}$ and $\theta'|_{G_K}$ are unramified at places not dividing $p$ or $u_0$, and crystalline and the places dividing $p$.
\item Let $E' = F' \cdot E$. Then the extension $E'/F'$ is everywhere unramified and the character $\alpha' = \alpha|_{G_{E'}}$ is everywhere unramified.
\end{itemize}
The hypotheses of \S \ref{sec_ordinary_lifts_under_additional_conditions} now apply to $F'$ together with $\theta|_{G_{K'}}$ etc. This concludes the proof of Theorem \ref{thm_existence_of_ordinary_lift_of_exceptional_representation}.

\section{Deduction of the  main theorem}\label{sec_main_theorem}

We now use the results established so far to deduce the theorem stated in the introduction. We first prove the theorem under favorable local hypotheses. We then use the results of \S \ref{ordinary} and soluble base change (in the form of \cite[Lemma 7.1]{ThorneA}) to show that the general case can be reduced to this one.

\begin{theorem}\label{intermediate}
 Let $F$ be a totally real field, let $p$ be an odd prime, and let $\rho:G_F \ra \GL_2(\aQp)$ be a continuous representation. Suppose that the following conditions hold.
 \begin{enumerate}
 \item  $[F:\Q]$ is even.

 \item The representation $\rhobar|_{F(\zeta_p)}$ is irreducible.
 
 \item  The character $\psi=\epsilon\det \rho$ is everywhere unramified.
 
 \item The representation $\rho$ is almost everywhere unramified.
 
 \item For each place $v|p$, $\rho|_{G_{F_v}}$ is semistable and $\rhobar|_{G_{F_v}}$ is trivial. For each embedding $\tau:F \ra \aQp$, we have ${\rm HT}_\tau(\rho)=\{0,1\}$.
 
 \item If $v$ is a finite place of $F$ not dividing $p$ at which $\rho$ is ramified, then $q_v \equiv 1 \text{ mod }p$, $\mathrm{WD}(\rho|_{G_{F_v}})^{\text{F-ss}} \cong {\rm rec}_{F_v}^T({\rm St}_2(\chi_v))$, for some unramified character $\chi_v:F_v^\times \ra \aQp^\times$, and $\rhobar|_{G_{F_v}}$ is trivial. The number of such places is even.
 
 \item   There exists a cuspidal automorphic representation $\pi$ of $\GL_2(\A_F)$ of weight 2  and an isomorphism $\iota: \aQp \ra \C$ satisfying the following conditions:
 
 \begin{enumerate}
 
 \item There is an isomorphism of residual representations $\overline{r_\iota(\pi)}=\overline{\rho}$.
 \item If $v |p $ and $\rho|_{G_{F_v}}$ is ordinary, then $\pi_v$ is $\iota$-ordinary and $\pi_v^{U_0(v)} \neq 0$. 
  If $v |p $ and $\rho|_{G_{F_v}}$ is non-ordinary, then $\pi_v$ is  not $\iota$-ordinary and $\pi_v$ is unramified. 
 \item If $v$ is place of $F$ that  does not divide  $ p\infty$  and $\rho|_{G_{F_v}}$ is unramified, then $\pi_v$ is unramified; if  $\rho_{G_{F_v}}$ is  ramified, then $\pi_v$ is an unramified twist of the Steinberg representation. 

 \end{enumerate}

\end{enumerate}

Then $\rho$ is automorphic: there exists a cuspidal automorphic representation $\pi'$ of $\GL_2(\A_F)$ of weight 2 and an isomorphism $\rho\simeq r_\iota(\pi')$.

\end{theorem}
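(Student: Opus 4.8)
The plan is to deduce Theorem \ref{intermediate} from the $R=\bbT$ theorem of \S\ref{sec_r_equals_t}, the level-raising results of \S\ref{raising}, the auxiliary-prime construction of \S\ref{novel}, and the level-lowering-mod-$p^N$ result of \S\ref{lowering}, assembled as in the concluding argument of \cite[\S 6--7]{ThorneA}. After enlarging the coefficient field $E$ and twisting $\rho$ by an everywhere-unramified character trivial modulo $\lambda$ (which affects neither the hypotheses nor the conclusion), I may assume that $\rho$ takes values in $\GL_2(\cO)$ and that $\epsilon\det\rho=\psi$ is the Teichm\"uller lift of $\epsilon\det\rhobar$; set $\rho_N=\rho\bmod\lambda^N$. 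By hypotheses 2--6 the representation $\rhobar$ is exceptional, unramified outside $S_p$, trivial at each $v\mid p$, and has $\epsilon\det\rhobar$ everywhere unramified, so it meets the running hypotheses of \S\ref{sec_r_equals_t}; let $R$ be the set (even, by hypothesis 6) of finite places prime to $p$ at which $\rho$ is ramified, and let $\sigma\subseteq S_p$ be the set of $v$ with $\rho|_{G_{F_v}}$ ordinary. Starting from the residual automorphy input of hypothesis 7 and applying the level-raising results of \S\ref{raising} --- which now permit raising at primes $v$ with $q_v\equiv1\bmod p$ whenever $\rhobar(\Frob_v)$ is unipotent of order $p$ --- I would produce a cuspidal automorphic representation $\pi_0$ of weight $2$ and central character $\psi$ with $\overline{r_\iota(\pi_0)}\cong\rhobar$, which is $\iota$-ordinary with $\pi_{0,v}^{U_0(v)}\neq0$ on $\sigma$, unramified on $S_p-\sigma$, an unramified twist of Steinberg on $R$, and unramified at all other finite places.

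The obstruction peculiar to the exceptional case is that, because of the Lie classes in $H^1_{\cS,T}(\Ad^0\rhobar(1))$ that survive Taylor--Wiles primes, there is no global deformation problem of which $\rho$ itself is an object; instead one argues modulo $\lambda^N$ using an auxiliary Steinberg prime. So fix $N$. By Proposition \ref{auxiliary} I choose a set $Q_0$ of even cardinality, disjoint from $S_p\cup R$, consisting of places $v$ with $q_v\equiv1\bmod5$ but $q_v\not\equiv1\bmod\lambda^2$, with $\rhobar$ unramified and $\rhobar(\Frob_v)$ unipotent of order $5$ at $v$, and with $\rho_N|_{G_{F_v}}$ of type $\cD_v^{\mathrm{St(uni)}}$. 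After level-raising $\pi_0$ further at $Q_0$, the global deformation problem $\cS$ of \S\ref{sec_r_equals_t} with $V_1=\emptyset$, $V_2=Q_0$ and the above $R$ is defined, and $\rho_N$ is of type $\cS$: ordinary at $\sigma$, crystalline non-ordinary of Hodge--Tate weights $\{0,1\}$ at $S_p-\sigma$, an unramified twist of Steinberg at $R$, of type $\cD_v^{\mathrm{St(uni)}}$ at $Q_0$, and of determinant $\epsilon^{-1}\psi$. Since $V_2\neq\emptyset$, Theorem \ref{cor_r_equals_t} applies, with $C$ an upper bound --- which may be taken independent of the Hida level $n$ --- for $\dim_k H_\psi^{\text{ord}}(U_1(\sigma^n),k)[\ffrm]$ and $n$ chosen large enough that the classifying map of $\rho_N$ factors through $\Lambda_n$: it shows that the map $R_\cS\to\cO/\lambda^N$ classifying $\rho_N$, followed by reduction modulo $\lambda^{\lfloor N/C\rfloor}$, factors through $\bbT_{Q_0}(H_\psi^{\text{ord}}(U_1(\sigma^n),\cO)_\ffrm)$, and that $(H_\psi^{\text{ord}}(U_1(\sigma^n),\cO)_\ffrm\otimes_\cO\cO/\lambda^{\lfloor N/C\rfloor})[I]$ contains a submodule isomorphic to $\cO/\lambda^{\lfloor N/C\rfloor}$, where $I$ is the resulting kernel.

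I would then feed this into Corollary \ref{approximation} (via Theorem \ref{thm_level_lowering_mod_p_to_the_N}), taking $Q=Q_0$, $m=1$ and $r=\#Q_0$ there: since every $v\in Q_0$ satisfies $q_v\not\equiv1\bmod\lambda^2$ and $\rho$ is genuinely unramified at $Q_0$, this converts automorphy at level $R\cup Q_0$ modulo $\lambda^{\lfloor N/C\rfloor}$ into automorphy at level $R$ modulo $\lambda^{\lfloor N/C\rfloor-2mr}$, agreeing with $\rho$ at every Frobenius outside $S_p\cup R\cup Q_0$. Since $C$, $m$ and $r$ are fixed independently of $N$, the exponent $\lfloor N/C\rfloor-2mr$ tends to infinity with $N$, so Corollary \ref{approximation} yields the automorphy of $\rho$. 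The essential difficulty has already been dealt with in the preparatory sections: that the prime of type $\cD_v^{\mathrm{St(uni)}}$ kills the Lie subspace of the dual Selmer group so that what remains can be annihilated by ordinary Taylor--Wiles primes (Propositions \ref{steinberg} and \ref{TW}), and that --- using Lemma \ref{Z5} and the Cebotarev density theorem --- one may insist on $q_v\not\equiv1\bmod\lambda^2$ at that prime so that the cumulative error $2mr$ stays bounded as $N$ grows. What is left in the proof of Theorem \ref{intermediate} proper is the bookkeeping that $\rho_N$ satisfies every local condition of $\cS$ and that the passages $\pi\rightsquigarrow\pi_0$ are legitimate, both of which proceed exactly as in \cite[\S 6--7]{ThorneA}.
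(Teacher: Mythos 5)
Your overall strategy is the paper's: fix $N$, use Proposition \ref{auxiliary} to augment the deformation problem with auxiliary Steinberg-type primes so that $\rho_N$ (but not $\rho$ itself) becomes of type $\cS$, apply Theorem \ref{cor_r_equals_t}, and feed the output into Corollary \ref{approximation} with $C$, $m$, $r$ bounded independently of $N$. But there is one genuine gap: you assert that hypotheses 2--6 force $\rhobar$ to be exceptional. They do not --- they hold, for instance, for any odd $p \neq 5$ with $\rhobar|_{G_{F(\zeta_p)}}$ irreducible, or for $p=5$ with projective image not containing $\PGL_2(\F_5)$ with the prescribed quadratic subfield --- and in the non-exceptional case Proposition \ref{auxiliary} is vacuous, so your construction of $Q_0$ produces nothing and the argument collapses. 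The theorem as stated covers all odd $p$, so you must begin, as the paper does, by reducing to the case $p=5$ and $\rhobar$ exceptional via the known automorphy lifting theorems in the adequate case (e.g.\ \cite[Theorem 3.5.5]{Kisin}); your machinery genuinely requires a place of type $\cD_v^{\rm St(uni)}$ and hence exceptionality.

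Beyond that, your route differs harmlessly from the paper's in the composition of $Q_0$: you take two places of type ${\rm St(uni)}$ (so $V_1=\emptyset$, $r=2$), whereas the paper takes $Q_0=\{v_1,v_2\}$ with $v_2$ of type ${\rm St(uni)}$ from Proposition \ref{auxiliary} and $v_1$ of type ${\rm St}(\alpha_{v_1})$ with $q_{v_1}\equiv -1 \bmod p^N$ and $\rho_N(\Frob_{v_1})$ conjugate to a complex conjugation (so $r=1$). Both choices give an even set with $V_2\neq\emptyset$, Propositions \ref{bound} and \ref{prop_inductive_step_in_mod_p_n_level_lowering} explicitly cover the $q_v\equiv 1 \bmod p$ places, and $2mr$ remains bounded either way. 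Two small points of bookkeeping: Proposition \ref{auxiliary} gives $q_v\not\equiv 1 \bmod 5^2$, not $q_v \not\equiv 1 \bmod \lambda^2$, so (as the paper does) you should choose $m$ with $q_v\not\equiv 1\bmod \lambda^{m+1}$ --- this $m$ depends only on $e(E/\bbQ_5)$, not on $N$ --- rather than fixing $m=1$; and the independence of $C$ from $N$ (i.e.\ from the varying choice of $Q_0$) is precisely the content of Proposition \ref{bound}, via the bound $4^{\#Q_0}C_0$ with $C_0$ computed at level $R$, which should be cited explicitly at that step.
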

\begin{proof}
The proof is essentially the same as the proof of \cite[Theorem 7.2]{ThorneA}. At this point enough minor deviations have accumulated that we give the details. By known results, we can assume that $p = 5$ and that $\overline{\rho}$ is exceptional. Choosing a coefficient field and conjugating, we can assume that $\rho$ takes values in $\GL_2(\cO)$ and that the residue field $k$ contains the eigenvalues of all elements in the image of $\overline{\rho}$. We will show that $\rho$ satisfies the hypotheses of Corollary \ref{approximation}, using Theorem \ref{cor_r_equals_t}. This will imply the theorem. Let us therefore fix an integer $N \geq 1$, let $\sigma \subset S_p$ denote the set of $p$-adic places where $\rho$ is ordinary, and let $R$ denote the set of places prime to $p$ where $\rho$ is ramified. We consider the global deformation problem:
\[ \cS=(\rhobar,\epsilon^{-1}\psi,S_p \cup R, \{\Lambda_v\}_{v \in \sigma} \cup \{\Oc\}_{v \in (S_p-\sigma) \cup R}, \{\cD_v^{\text{ord}}\}_{v \in \sigma} \cup \{\cD_v^{\text{non-ord}}\}_{v\in S_p -\sigma} 
\cup \{\cD_v^{\rm St}\}_{v \in R}),\]
and set $T=S_p \cup R$. Here the rings $\Lambda_v$ $(v\in \sigma)$ are the local Iwasawa algebras, as in \S \ref{sec_r_equals_t}. We now claim that we can find a finite set $Q_0$ of finite places of $F$ satisfying the following conditions:
\begin{enumerate}
\item We have $Q_0=\{v_1, v_2 \}$ where the deformation problems $\cD_{v_1}^{\rm St(\alpha_{v_1})}$ and $\cD_{v_2}^{\rm St(uni)}$ is defined. In particular, $\# Q_0$ is even.
\item The element $\rho_{N}(\Frob_{v_1})$ is conjugate to $\rho_{N}(c)$ for $c \in G_F$ a complex conjugation, and $q_{v_1} \equiv -1 \text{ mod }p^N$.
\item We have $q_{v_2} \equiv 1 \text{ mod }p$, but $q_{v_2} \not\equiv 1 \text{ mod }p^2$, $\rhobar(\Frob_{v_2})$ has order $p$, and $\rho_{N}(\Frob_{v_2})$ is of the form 
\[ \left(\begin{array}{cc}
  \chi  & *  \\
  0 & \chi\epsilon^{-1}
\end{array} \right),\]
 where $\chi$ is an unramified finite order character.
\end{enumerate}
Indeed, we choose $v_2$ to be any place satisfying the conditions of Proposition \ref{auxiliary}, and then choose $v_1$ to be any place such that $q_{v_1} \equiv -1 \text{ mod }p^{N}$ and such that $\rho_{N}(\Frob_{v_1})$ is conjugate to $\rho_{N}(c)$. Such a choice of $Q_0$ having been fixed, we define the augmented deformation problem
\[ \begin{split} \cS_{Q_0}=(\rhobar,\epsilon^{-1}\psi,S_p \cup R \cup Q_0, \{\Lambda_v\}_{v \in \sigma} \cup \{\Oc\}_{v \in (S_p-\sigma) \cup R}, \{\cD_v^{\text{ord}}\}_{v \in \sigma} \cup \{\cD_v^{\text{non-ord}}\}_{v\in S_p -\sigma} 
\cup \{\cD_v^{\rm St}\}_{v \in R} \\ \cup \{\cD_{v_1}^{\rm St(\alpha_{v_1})}, \cD_{v_2}^{\rm St(uni)}\}). \end{split} \]
By \cite[Lemma 4.13]{ThorneA}, we can find a cuspidal automorphic representation $\pi_0$ of $\GL_2(\A_F)$ weight 2 satisfying the following conditions:
\begin{itemize} 
\item There is an isomorphism of residual representations $\overline{r_\iota(\pi_0)} \cong \rhobar$.
\item If $v \in \sigma$, then $\pi_{0,v}$ is $\iota$-ordinary and $\pi_{0,v}^{U_0(v)} \neq 0$. If $ v \in S_p - \sigma$, then $\pi_{0,v}$ is not $\iota$-ordinary.
\item If  $v \notin S_p \cup R \cup Q_0$ is a finite place of $F$, then $\pi_{0,v}$ is unramified. If $v \in R \cup Q_0$, then $\pi_{0,v}$ is an unramified twist of the Steinberg representation. If $v = v_1$, then the eigenvalue of $U_v$ on $\iota^{-1}\pi_{0,v}^{U_0(v)} $ is congruent to $\alpha_v$ modulo the maximal ideal of $\aZp$. 
\end{itemize}
After replacing $\pi_0$ by a character twist, we can assume that $\pi_0$ has central character $\iota \psi$. The hypotheses  of \S \ref{sec_r_equals_t} are thus satisfied with respect to the global deformation problem $\cS_{Q_0}$ and the representation $\pi_0$. 

Fix a choice of auxiliary prime $a$ as in Lemma \ref{trivial}, let $S = S_p \cup R \cup \{ a \}$, and let $\ffrm_\emptyset \subset \bbT^{S, \text{univ}}$ be the maximal ideal corresponding to the representation $\pi_0$. Then $\ffrm_\emptyset$ is in the support of $H_R(U)$, where $U = U_1(\sigma)$ is as described in \S \ref{sec_r_equals_t}. Let $C_0 = \dim_k (H_R(U) \otimes_\cO k)[\ffrm_{\emptyset}]$. It follows from Proposition \ref{bound} that $\dim_k (H_{R \cup Q_0}(U_{Q_0} \otimes_\cO k)[\ffrm_{Q_0}] \leq 4^{\# Q_0} C_0$. We can therefore apply Theorem \ref{cor_r_equals_t} with $C = 4^{\# Q_0} C_0$ and $n =  1$ to conclude that there is a homomorphism $f : \bbT_{Q_0}^{S \cup Q_0}(H_{R \cup Q_0}(U_{Q_0})) \to \cO / \lambda^{\lfloor N / C \rfloor}$ with the following properties:
\begin{itemize}
\item For each finite place $v \not\in S \cup Q_0$ of $F$, we have $f(T_v) = \tr \rho_N(\Frob_v)$.
\item Let $I = \ker f$. Then  $(H_{R \cup Q_0}(U_{Q_0})_{\ffrm_{Q_0}} \otimes_\cO \cO/\lambda^{\lfloor N / C \rfloor})[I]$ contains an $\cO$-submodule isomorphic to $\cO / \lambda^{\lfloor N / C \rfloor}$.
\end{itemize}
Let $m$ be such that $q_{v_2} \not \equiv 1 \text{ mod }\lambda^{m+1}$. Since $C$ is independent of the choice of $N$, the conditions of Corollary \ref{approximation} are now satisfied with $r = 1$, allowing us to conclude the automorphy of $\rho$.
\end{proof}

\begin{lemma}\label{lem_existence_of_ordinary_lifts}
Let $F$ be a totally real field, let $p = 5$, and let $\iota : \overline{\bbQ}_p \cong \bbC$ be an isomorphism. Let $\pi$ be a cuspidal automorphic representation of $\GL_2(\bbA_F)$ of weight 2 such that $\overline{r_\iota(\pi)}$ is exceptional. Let $\sigma \subset S_p$ be a subset, and let $R$ be a finite set of finite places of $F$ which are prime to $p$. Then there exists a soluble totally real extension $F' / F$ and a cuspidal automorphic representation $\pi_0$ of $\GL_2(\bbA_{F'})$ of weight 2, satisfying the following conditions:
\begin{enumerate}
\item There is an isomorphism $\overline{r_\iota(\pi)}|_{G_{F'}} \cong \overline{r_\iota(\pi_0)}$. If $v$ is a place of $F'$ above $S_p \cup R$, then $\overline{r_\iota(\pi)}|_{G_{F'_v}}$ is trivial.
\item If $v$ is a $p$-adic place of $F'$ dividing $\sigma$, then $\pi_{0, v}^{U_0(v)} \neq 0$, and $\pi_v$ is $\iota$-ordinary.
\item If $v$ is a $p$-adic place of $F'$ not dividing $\sigma$, then $\pi_{0, v}$ is unramified and not $\iota$-ordinary.
\item If $v$ is a place of $F'$ dividing $R$, then $\pi_{0, v}$ is an unramified twist of the Steinberg representation and $q_v \equiv 1 \text{ mod }p$.
\item If $v$ is any other finite place of $F'$, then $\pi_{0, v}$ is unramified.
\end{enumerate}
\end{lemma}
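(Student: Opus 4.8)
The plan is to obtain $\pi_0$ from the everywhere-unramified ordinary lift furnished by Theorem~\ref{thm_existence_of_ordinary_lift_of_exceptional_representation}, after first making a soluble totally real base change to trivialise the residual representation locally, and then modifying the local type at the finitely many places of $S_p \cup R$. Throughout, $\overline{\rho} = \overline{r_\iota(\pi)}$, and I will freely pull back $\sigma$ and $R$ to whichever base field is in play.

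First I would choose a soluble totally real extension $F_1/F$ such that: for every place $v$ of $F_1$ above $p$ one has $\overline{\rho}|_{G_{F_{1,v}}}$ trivial and $\bbQ_p(\zeta_p) \subset F_{1,v}$; for every place $v$ above $S_p \setminus \sigma$ the completion $F_{1,v}$ additionally contains the unramified quadratic extension of $\bbQ_p$ and is ramified over $\bbQ_p$, so that crystalline non-ordinary representations of Hodge--Tate weights $\{0,1\}$ with trivial residual reduction exist over $F_{1,v}$; and for every place $v$ above $R$ one has $q_v \equiv 1 \bmod p$ and $\overline{\rho}|_{G_{F_{1,v}}}$ trivial. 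By Lemma~\ref{bc}, $\overline{\rho}|_{G_{F_1}}$ remains exceptional, so $\overline{\rho}|_{G_{F_1(\zeta_p)}}$ is irreducible and the base change of $\pi$ to $F_1$ is still cuspidal. Applying Theorem~\ref{thm_existence_of_ordinary_lift_of_exceptional_representation} to $\overline{\rho}|_{G_{F_1}}$ and then making a further soluble totally real base change $F_2/F_1$, chosen linearly disjoint from the auxiliary fields above so as to preserve all these local conditions, I obtain a weight-$2$ cuspidal automorphic representation $\pi_2$ of $\GL_2(\bbA_{F_2})$ which is everywhere unramified and $\iota$-ordinary, with $\overline{r_\iota(\pi_2)} \cong \overline{\rho}|_{G_{F_2}}$.

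It then remains to adjust the local type of $\pi_2$ at the places above $R$ and above $S_p \setminus \sigma$. At the places above $R$, since $q_v \equiv 1 \bmod p$ and $\overline{\rho}(\Frob_v) = 1$ (so the ratio of residual Frobenius eigenvalues equals $q_v$ modulo $\lambda$ vacuously), level raising in the style of \cite[Lemma 4.13]{ThorneA} produces a congruent weight-$2$ cuspidal automorphic representation that is an unramified twist of the Steinberg representation at each such place, while preserving the residual representation (hence its triviality there) and leaving the components unramified at all other finite places; the parity constraint on $\# R$ is absorbed into the choice of quaternion algebra as in \cite[\S 4]{ThorneA}. At the places above $S_p \setminus \sigma$, I would switch the local type from $\iota$-ordinary to unramified non-ordinary, keeping the components unramified and the residual representation fixed, by rerunning the argument of \S\ref{ordinary}: one forms the tensor product of the universal deformation of $\overline{\rho}|_{G_{F_2}}$ with a suitable lattice of induced Hecke characters exactly as in \S\ref{sec_ordinary_lifts_under_additional_conditions}, but now taking the local deformation problem at the places of $S_p \setminus \sigma$ to be the crystalline non-ordinary problem of Hodge--Tate weights $\{0,1\}$ in place of the ordinary one, and concludes via Theorem~\ref{thm_automorphy_lifting_in_dimension_4} and \cite[Proposition 5.1.1]{Bar12}. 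Composing these modifications (enlarging the base field once more if needed so that the level-raising and type-switching inputs apply simultaneously), and finally twisting by a finite-order character to fix the central character, yields $\pi_0$ and the soluble totally real extension $F'$ with all the required properties.

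The main obstacle is this last step, the production of the unramified non-ordinary form at the places of $S_p \setminus \sigma$: because $\overline{\rho}$ is exceptional, the usual companion-form and weight-part results are unavailable, so this must be carried out by the inadequate four-dimensional automorphy lifting method of \S\ref{ordinary}, and one must have arranged the base change so that triviality of the residual representation at the $p$-adic places is consistent with non-ordinary crystalline behaviour there -- which is precisely why $F'_v$ is taken to be ramified over $\bbQ_p$ and to contain the unramified quadratic extension of $\bbQ_p$. Keeping track of the compatibility of all the local conditions through the successive soluble base changes is the other point requiring care, but it is routine.
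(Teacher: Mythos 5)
Your overall architecture (soluble base change to trivialise $\overline{\rho}$ locally and arrange $q_v\equiv 1\bmod p$ above $R$, then Theorem \ref{thm_existence_of_ordinary_lift_of_exceptional_representation} for an everywhere unramified $\iota$-ordinary congruent form, then level raising in the style of \cite[Lemma 4.13]{ThorneA} to install Steinberg components above $R$) is exactly the paper's, and those parts are fine. Where you genuinely diverge is the type-switching at the $p$-adic places outside $\sigma$. The paper does \emph{not} rerun the four-dimensional machinery there: it invokes Lemma \ref{types} (the adaptation of \cite[Lemma 5.25]{ThorneA}, proved via types on the quaternion algebra and valid here because $\overline{\rho}|_{G_{F(\zeta_p)}}$ is irreducible), which replaces the $\iota$-ordinary components at $S_p-\sigma$ by supercuspidal ones within the same residual eigensystem; a further soluble base change then renders the corresponding local Galois representations crystalline, so that by local--global compatibility the new components are unramified and, by \cite[Lemma 5.24]{ThorneA}, not $\iota$-ordinary. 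This is soft and entirely avoids the deformation-theoretic input you propose.

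Your alternative --- rerunning \S\ref{ordinary} with the ordinary condition replaced by a crystalline non-ordinary condition of Hodge--Tate weights $\{0,1\}$ at $S_p\setminus\sigma$ --- is in the spirit of \cite{Bar12} and is plausibly workable, but as written it has real gaps. First, \S\ref{sec_ordinary_lifts_under_additional_conditions} takes the target local condition to be \emph{ordinary} crystalline of weights $\{0,2\}$ precisely because that lifting ring is irreducible with smooth generic fibre; the non-ordinary crystalline locus in weights $\{0,1\}$ is the complement of the ordinary components in a Barsotti--Tate lifting ring, and you must justify that it is a (nonempty, suitably chosen) union of irreducible components defining a local deformation problem of the correct dimension. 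Second, and more seriously, the identification of the component $\cS'$ containing the known automorphic point with the component containing the tensor product built from the universal deformation is the crux of the Barnet-Lamb--Gee--Geraghty argument; with the roles of ordinary and non-ordinary reversed, and with a \emph{mixed} condition (ordinary at $\sigma$, non-ordinary at $S_p\setminus\sigma$), the auxiliary characters $\theta,\theta'$ must be chosen with different Hodge--Tate patterns at different $p$-adic places and the component-matching must be reverified; this is not ``exactly as in'' \S\ref{sec_ordinary_lifts_under_additional_conditions}. None of this is needed: Lemma \ref{types} is already in the paper and was stated for exactly this purpose.
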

\begin{proof}
We recall (Lemma \ref{bc}) that the property of being exceptional is preserved under soluble base change. The current lemma is now an easy consequence of Theorem \ref{thm_existence_of_ordinary_lift_of_exceptional_representation}, Lemma \ref{types}, and \cite[Lemma 4.13]{ThorneA}.
\end{proof}

We can now prove our main theorem.
\begin{theorem}\label{thm_main_theorem}
Let $F$ be a totally real number field, let $p$ be an odd prime, and let $\rho : G_F \to \GL_2(\barqp)$ be a continuous representation satisfying the following conditions.
\begin{enumerate}
\item The representation $\rho$ is almost everywhere unramified. 
\item For each place $v | p$ of $F$, $\rho|_{G_{F_v}}$ is de Rham. For each embedding $\tau : F \hookrightarrow \barqp$, we have $\mathrm{HT}_\tau(\rho) = \{0, 1\}$.
\item The  representation $\overline{\rho}|_{G_{F(\zeta_p)}}$ is irreducible.
\item There exists a cuspidal automorphic representation $\pi_0$ of $\GL_2(\bbA_F)$ of weight 2 such that $\overline{r_\iota(\pi)} \cong \overline{\rho}$.
\end{enumerate}
Then $\rho$ is automorphic: there exists a cuspidal automorphic representation $\pi$ of $\GL_2(\bbA_F)$ of weight 2, an isomorphism $\iota : \barqp \to \bbC$, and an isomorphism $\rho \cong r_\iota(\pi)$.
\end{theorem}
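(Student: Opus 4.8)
The plan is to deduce Theorem~\ref{thm_main_theorem} from Theorem~\ref{intermediate} by a chain of soluble base changes, with Lemma~\ref{lem_existence_of_ordinary_lifts} supplying the auxiliary automorphic representation of exactly the right local type. First I would discard everything but the genuinely new case: by the automorphy lifting theorems already available (in particular \cite[Theorem~3.5.5]{Kisin} and its antecedents, which apply whenever $\overline{\rho}$ is adequate), I may assume $p = 5$ and, using Lemma~\ref{bc}, that $\overline{\rho}$ is exceptional; after conjugation $\rho$ may be taken valued in $\GL_2(\Oc)$ with $k$ containing the eigenvalues of all elements in the image of $\overline{\rho}$.

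Since automorphy descends along soluble totally real extensions for $\GL_2$ (Langlands' cyclic base change, in the form of \cite[Lemma~7.1]{ThorneA}), it suffices to prove that $\rho|_{G_{F'}}$ is automorphic for a well-chosen soluble totally real $F'/F$. Using the $p$-adic monodromy theorem (de Rham implies potentially semistable) at the places above $p$, Grothendieck's local monodromy theorem at the other ramified places, the finiteness of $\epsilon\det\rho$, and standard base-change arguments, I would choose $F'$ --- linearly disjoint from the auxiliary fields that arise in the construction, and with exceptionality preserved by Lemma~\ref{bc} --- so that, writing $\rho$ again for $\rho|_{G_{F'}}$: $[F':\Q]$ is even; $\overline{\rho}|_{G_{F'_v}}$ is trivial at every place above $p$ and at every finite place where $\rho$ is ramified, and $\Qp(\zeta_5)\subset F'_v$ for $v\mid p$; $\rho|_{G_{F'_v}}$ is semistable with $\mathrm{HT}_\tau=\{0,1\}$ for $v\mid p$; at each finite $v\nmid p$ where $\rho$ is ramified one has $q_v\equiv 1\bmod 5$ and $\mathrm{WD}(\rho|_{G_{F'_v}})^{\text{F-ss}}\cong\rec^T_{F'_v}(\St_2(\chi_v))$ for an unramified $\chi_v$, and the number of such places is even; and $\epsilon\det\rho$ is everywhere unramified. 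The simultaneous parity of this ramified set and of $[F':\Q]$ is arranged exactly as in the proof of \cite[Theorem~7.2]{ThorneA}.

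Now set $\sigma\subset S_p$ to be the set of $p$-adic places of $F'$ at which $\rho$ is ordinary --- at the remaining $p$-adic places $\rho|_{G_{F'_v}}$ is crystalline non-ordinary, since a semistable non-crystalline representation with Hodge--Tate weights $\{0,1\}$ is automatically ordinary --- and let $R$ be the set of finite places away from $p$ at which $\rho$ is ramified. The representation $\overline{\rho}|_{G_{F'}}$ is still residually automorphic (it is the restriction of $\overline{r_\iota(\pi_0)}$, and cyclic base change is available) and exceptional, so I would apply Lemma~\ref{lem_existence_of_ordinary_lifts} with this $\sigma$ and $R$ to produce a further soluble totally real extension $F''/F'$ and a cuspidal automorphic representation $\pi$ of $\GL_2(\bbA_{F''})$ of weight $2$ with $\overline{r_\iota(\pi)}\cong\overline{\rho}|_{G_{F''}}$, with $\overline{\rho}$ trivial on the decomposition groups above $S_p\cup R$, which is $\iota$-ordinary and has $\pi_v^{U_0(v)}\neq 0$ precisely at the places above $\sigma$, is unramified and non-$\iota$-ordinary at the other $p$-adic places, is an unramified twist of Steinberg at the places above $R$, and is unramified elsewhere; replacing $\pi$ by a character twist (possible since $p$ is odd and $\omega_\pi$ and $\iota(\epsilon\det\rho)$ have the same reduction) I may take the central character of $\pi$ to be $\iota(\epsilon\det\rho)$, again adjusting $R$ and the degree $[F'':\Q]$ for parity if necessary.

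Finally I would apply Theorem~\ref{intermediate} over $F''$ to the pair $(\rho|_{G_{F''}},\pi)$: ordinariness at $p$ is preserved under base change, so hypothesis~(5) and the ordinary/non-ordinary alternative in hypothesis~(7)(b) hold; the Steinberg conditions of hypotheses~(6) and~(7)(c) hold by the choice of $R$ and the construction of $\pi$; and the residual-automorphy and residual-triviality conditions are precisely what Lemma~\ref{lem_existence_of_ordinary_lifts} guarantees, while $\overline{\rho}|_{G_{F''(\zeta_5)}}$ remains irreducible since $\overline{\rho}$ is exceptional. Theorem~\ref{intermediate} then yields automorphy of $\rho|_{G_{F''}}$, and soluble descent (\cite[Lemma~7.1]{ThorneA}, applied to the tower $F''/F$) gives automorphy of $\rho$. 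The non-formal ingredients are Theorem~\ref{intermediate} itself and, crucially, the construction of the $\iota$-\emph{ordinary} auxiliary $\pi$ in the inadequate setting --- that is, Theorem~\ref{thm_existence_of_ordinary_lift_of_exceptional_representation} together with its underlying $4$-dimensional automorphy lifting theorem; once these are in place the deduction is formal, the only delicate point being to keep the linear-disjointness and evenness conditions mutually compatible across the several base changes.
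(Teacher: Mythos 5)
Your proposal is correct and follows essentially the same route as the paper: reduce by known results to the case $p=5$ with $\overline{\rho}$ exceptional, twist so that $\epsilon\det\rho$ is suitable, make a soluble base change (using \cite[Lemma 7.1]{ThorneA} for descent) combined with Lemma \ref{lem_existence_of_ordinary_lifts} to arrange the local hypotheses and produce the auxiliary automorphic representation of matching local type, and then conclude via Theorem \ref{intermediate}. The paper's own argument is a terser version of exactly this deduction.
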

\begin{proof}
By known results, it suffices to treat the case where $p = 5$ and $\overline{\rho}$ is exceptional. Replacing $\rho$ by a twist, we can assume that $\epsilon \det \rho$ has order prime to $p$. By soluble base change and descent (\cite[Lemma 7.1]{ThorneA}), we are free to replace $F$ be any soluble totally real extension. In particular, after applying Lemma \ref{lem_existence_of_ordinary_lifts} and making a preliminary soluble base change we can assume that the following conditions are satisfied:
\begin{itemize}
\item For each finite place $v$ of $F$, $\rho$ is semi-stable. If $\rho|_{G_{F_v}}$ is ramified, then $\overline{\rho}|_{G_{F_v}}$ is trivial. 
\item The cuspidal automorphic representation $\pi_0$ satisfies the following conditions:
\begin{itemize}
\item If $v \in S_p$ and $\rho|_{G_{F_v}}$ is $\iota$-ordinary, then $\pi_{0, v}$ is $\iota$-ordinary and $\pi_{0, v}^{U_0(v)} \neq 0$. If $\rho|_{G_{F_v}}$ is not $\iota$-ordinary, then $\pi_{0, v}$ is unramified and not $\iota$-ordinary.
\item If $v$ is a finite place of $F$ prime to $p$, then $\pi_{0, v}$ is ramified if and only if $\rho|_{G_{F_v}}$ is ramified, and in this case $\pi_{0, v}$ is an unramified twist of the Steinberg representation and $q_v \equiv 1 \text{ mod }p$. The number of such places is even.
\end{itemize}
\end{itemize}
The hypotheses of Theorem \ref{intermediate} are now satisfied; this completes the proof.
\end{proof}

\subsection{Acknowledgements} CK  was supported by NSF grant DMS-1161671 and by a Humboldt Research Award, and thanks the Tata Institute of Fundamental Research, Mumbai for hospitality during the period in which  much  of the work on this paper was done.  He thanks G. B\"ockle,  F. Diamond,  N. Fakhruddin,  M. Larsen, D. Prasad, R. Ramakrishna and J-P. Serre for helpful conversations and correspondence. This research was partially conducted during the period JT served as a Clay Research Fellow.

\bibliographystyle{alpha}
\bibliography{gl2}

\end{document}